\title{Convex Banding of the Covariance Matrix}
\author{Jacob Bien\footnote{Department of BSCB, 
Department of Statistical Science,
1178 Comstock Hall,
Cornell University,
Ithaca, NY 14853}, Florentina Bunea\footnote{Department of Statistical Science,
1184 Comstock Hall,
Cornell University,
Ithaca, NY 14853}, and Luo Xiao\footnote{Department of Biostatistics,
Bloomberg School of Public Health,
Johns Hopkins University,
Baltimore, MD 21205}}
\newcommand{\real}{\mathbb R}
\newtheorem{lem}{Lemma}
\newtheorem{cor}{Corollary}
\newtheorem{theorem}{Theorem}
\newtheorem{prop}{Proposition}
\newtheorem{property}{Property}
\newtheorem{rem}{Remark}
\def\half{\frac{1}{2}}
\def\real{\mathbb R}
\def\Al{A^{(\ell)}}
\def\hAl{\hat A^{(\ell)}}
\def\gl{{g_\ell}}
\def\cov{\mathrm{Cov}}
\def\st{~\mathrm{s.t.}~}
\def\tr{\mathrm{tr}}
\def\E{\mathbb{E}}
\def\S{\mathbf S}
\def\hatP{\hat{\Sigma}}
\def\tildeP{\tilde{\Sigma}}
\def\true{\Sigma^*}
\DeclareMathOperator*{\minimize}{\mathrm{Minimize}}
\begin{document}
\maketitle

\begin{abstract}
  We introduce a new sparse estimator of the covariance
  matrix for high-dimensional models 
  in which the variables have a known ordering.  Our
  estimator, which is the solution to a convex optimization problem,
  is equivalently expressed as an estimator which tapers the sample covariance matrix by a
  Toeplitz, sparsely-banded, data-adaptive matrix.  As a result of this adaptivity, the convex banding 
  estimator enjoys theoretical optimality properties not attained by
  previous banding or tapered estimators.  In particular, our  convex banding estimator
  is minimax rate adaptive  in Frobenius and operator norms, up to log factors, over  commonly-studied classes of
  covariance matrices, and  over more general classes.  Furthermore, it correctly recovers
  the bandwidth when the true covariance is exactly banded.   Our convex formulation
  admits a simple and efficient algorithm. Empirical studies
  demonstrate its practical effectiveness and illustrate that  
   our exactly-banded estimator works well even when the true
  covariance matrix is only close to a banded matrix, confirming  our theoretical results. 
  Our method compares favorably with all existing methods, in terms of accuracy and speed. 
  We  illustrate the practical merits of the convex banding estimator by showing that 
  it  can be used to improve the performance of
  discriminant analysis for classifying sound recordings.\\
\noindent{\bf Keywords:} {\em covariance; banded;
  structured sparsity;
positive definite; hierarchical group lasso; high-dimensional; convex;
adaptive.}
\end{abstract}

\section{Introduction}
\label{sec:introduction}

The covariance matrix is one of the most fundamental objects in
statistics, and yet its reliable estimation is greatly challenging in
high dimensions.
The sample covariance matrix is known to degrade rapidly as an estimator as the
number of variables increases, as noted more than four decades  ago, see for instance  \cite{Dempster72}, unless additional
assumptions are placed on the underlying covariance structure, see 
\cite{Bunea12} for an overview.  Indeed, the key to tractable estimation of a
high-dimensional covariance matrix lies in exploiting any knowledge of
the covariance structure.  In this vein, this paper develops an
estimator geared toward a covariance structure that naturally arises
in many applications in which the variables are ordered.  Variables
collected over time or representing spectra constitute two
subclasses of examples.

We observe an independent sample, $\mathbf X_1,\ldots, \mathbf X_n\in\real^p$ of mean zero 
random vectors with true population covariance matrix $\mathbb{E}[\mathbf X_i \mathbf
X_i^T]=\true$.  When the variables have a known ordering, it is
often natural to assume that the dependence between variables is  a function of 
 the distance between variables' indices.  Common examples are in stationary time series modeling,  where  $\true_{j,(j+h)}$ depends only on $h$, 
 the lag of the series, with $ -K \leq h \leq K$, for some $K > 0$.  One can  weaken this assumption, allowing $\{\true_{j,(j+h)}:-K\le h\le K\}$
to depend on $j$, while retaining the assumption that the bandwidth
$K$ does not depend on $j$.  Another typical assumption is that  $\true_{jk}=\cov(X_j,X_k)$ decreases with $|j-k|$. A simple example is the moving-average process, where it 
is assumed that 
\begin{align}
  \true_{jk}=0 \text{ if }|j-k|>K,\label{eq:K-banded}
\end{align}
for some bandwidth parameter $K$, and a covariance matrix  $\true$ with this structure is called banded.  Likewise, in a first-order
autoregressive model, the elements decay exponentially with distance
from the main diagonal, $\true_{jk}\propto\beta^{|j-k|}$, where
$|\beta|<1$,  justifying the term ``approximately banded'' used to
describe such a matrix. 

 More generally, banded or approximately banded covariance matrices  can  be used to  model any  generic vector 
$(X_1, \ldots, X_p)$ whose entries are ordered such that any
entries that are more than $K$ apart are uncorrelated (or at most very weakly
correlated).  This situation does not specify any particular decay or ordering of correlation
strengths.  It is noteworthy that $K$ itself is unknown, and may depend on $n$ or $p$ or both.

A number of estimators have been proposed for
this setting that outperform the sample covariance matrix,
$\S=n^{-1}\sum_{i=1}^n(\mathbf X_i-\bar{ \mathbf{X}})(\mathbf X_i-\bar{\mathbf{X}})^T$, 
where $\bar{\mathbf{X}} = n^{-1}\sum_{i=1}^n \mathbf{X}_i$. In this paper we will focus on the 
squared Frobenius $\|\ \|_F^{2}$ and operator $\| \ \|_{op}$  norms of the difference between estimators and the population matrix,  as measures of performance. 
It is immediate to show that  $\frac1{p}\| \S - \true\|^2_F = O\left(\frac{p}{n}\right)$, and that $\|\S - \true\|_{op} = O\left(p\sqrt{\frac{\log p}{n}}\right)$, neither of which can be close to zero for $p > n$.  This can be rectified when one uses, instead, estimators that take into account the structure of $\true$.  For instance, 
 \citet{Bickel08band}  introduced   a
class of banding and tapering estimators of the form
\begin{align}
  \hat \Sigma_T=T*\S,\label{eq:tapered}
\end{align}
where $T$ is a Toeplitz matrix and $*$ denotes Schur multiplication. When the 
matrix $T$ is of the form
$T_{jk}=1\{|j-k|>K\}$, for a pre-specified $K$, one obtains what is referred to as the banded estimator. More general forms of $T$ are allowed, see \citet{Bickel08band}. 
The Frobenius and operator norm optimality of such estimators has been studied relative to classes of approximately banded  population matrices discussed in detail in Section 4.2 below. 
Members of these classes are matrices with entries decaying with distance from the main diagonal at rate depending  on the the sample size $n$ and a parameter $\alpha > 0$.
The minimax lower bounds for estimation over these classes have been established, for both norms, in \citet{Cai10}. 
The banding estimator achieves them in both Frobenius norm, see \citet{Bickel08band},   or operator norm, see  \citet{Xiao14}, 
and the same is true for a more general tapering estimator proposed in \citet{Cai10}. The common element of these estimators is that, while being minimax rate optimal, they are 
not minimax adaptive, in that their construction utilizes knowledge of $\alpha$, which is typically not known in practice. Moreover, there  is no guarantee that the banded estimators
are positive definite, while this  can be guaranteed via appropriate
tapering; however, unlike the banded estimators, the tapering estimators are not banded. 

Motivated by the desire to propose a rate-optimal estimator that does not depend on $\alpha$, \citet{Cai12}  propose
an adaptive estimator that partitions $\S$ into blocks of
varying sizes, and then zeros out some of
these blocks.  They show that this estimator is minimax adaptive in operator norm, 
over a certain class of population matrices. The block-structure form of
their  estimator is an artifact of an interesting proof technique,
which relates the operator norm of a matrix to the operator norm of a
smaller matrix whose elements are the operator norms of blocks of the
original matrix.  The construction is tailored to obtaining optimality in operator norm, and  the estimator leans heavily on the
assumption of decaying covariance away from the diagonal. In
particular, it automatically zeros out all elements far from the diagonal without
regard to the data (so a covariance far from the diagonal could be
arbitrarily large and still be set to zero).  In this sense, the method is less data-adaptive than
may be desirable and may suffer in situations in which the
long-range dependence does not decay fast enough. In addition, as in
the case of the banded estimators, this block-thresholded estimator
cannot be guaranteed to be positive definite.  If positive
definiteness is desired, the authors note that one may project the
estimator onto the positive semidefinite cone (however, this
projection step would lose the sparsity of the original solution).

Other estimators with good practical performance, in both operator and Frobenius norm, have been proposed, notably the Nested Lasso of \citet{Rothman10}. Their approach is 
 to regularize the Cholesky factor via solving a series of weighted
 lasso problems.  The resulting estimator is sparse and positive
 definite; however, a theoretical study of this estimator has not been conducted, and its computation 
 may be slow for large matrices.

\subsection{The convex banding estimator}

In this work we aim to bridge some of the gaps in the existing literature and to provide new insights into usages of estimators with a banded structure.  Our contributions are as follows: \\

\noindent 1. We construct a new estimator that is sparsely-banded and
positive definite with high probability. Our estimator is the solution to a convex optimization problem, 
and, by construction, has a data-dependent bandwidth.  We call our
estimation procedure {\em convex banding}.\\

\noindent 2.  We propose an efficient algorithm for constructing this
estimator and show that it amounts to the tapering of the sample
covariance matrix by a data-dependent matrix.  In constrast to previous
tapering estimators, which require a fixed tapering matrix, this
data-dependent tapering allows our estimator to adapt to the unknown bandwidth of the true covariance matrix. \\

\noindent 3.  We show that our estimator is minimax rate adaptive (up to logarithmic factors) with
respect to the Frobenius norm over a  new class of population matrices
that we term {\em semi-banded}. 
This class  generalizes those of banded or approximately banded
matrices. This establishes our estimator as the first with proved
minimax rate adaptivity in Frobenius norm (up to logarithmic factors) 
over the previously studied covariance matrix classes.  Moreover,
members of the newly introduced  class do not require entries to decay with the distance between their indices.   This extends  the scope of banded covariance estimators.   We also show that 
our estimator is minimax rate optimal (again, up to logarithmic factors) and adaptive with respect to the operator norm over a class of matrices with elements close to banded matrices, with bandwidth 
that can grow with $n$, $p$ or both, at an appropriate rate.  Moreover, we show that our estimators recover, with high probability, the sparsity pattern. \\

An unusual (and favorable) aspect of our estimator is
that it is simultaneously sparse and positive definite with high
probability---a property not shared by any other method with comparable theoretical guarantees.

The precise definition  of our convex banding estimator and a
discussion of the algorithm are given in Sections \ref{sec:proposal} and \ref{sec:computation} below. 
Our target $\true$ either has all elements beyond a
certain bandwidth $K$ being zero or it is close (in a sense defined
precisely in Section \ref{sec:theory}) to a $K$-banded matrix.  We therefore
aim at constructing an estimator that will zero out all covariances that are
beyond a certain data-dependent bandwidth.   If we regard the elements we would like to set to zero as a group, it is 
natural to consider a penalized estimator, with a penalty that zeros out groups.  
The  most basic penalty that sets groups of parameters  to zero simultaneously, without any other restrictions 
on these parameters,  is known as the Group Lasso \citep{Yuan06},
a generalization of the Lasso \citep{Tibshirani96}.
\citet{Zhao09} show that by taking a hierarchically-nested group
structure, one can develop penalties that set  one group  of parameters
to zero whenever another group is zero.  Penalties that render various hierarchical sparsity patterns have been proposed and studied in, for instance, 
 \cite{Jenatton10,Radchenko10,Bach12} and \cite{Bien13}.  The most common applications considered in these works are to regression models. 

The convex banding estimator employs a new hierarchical group lasso
penalty that is tailored to covariance matrices with a banded or semi-banded structure; its optimal properties cannot be obtained from 
simple extensions of any of the existing related penalties. 
We discuss this in Sections \ref{sec:weights} and \ref{sec:theory}.  We also provide a connection between our convex 
banded estimator and  tapering  estimators. Section \ref{sec:tapering} shows that our estimator can also be 
written in the form \eqref{eq:tapered},  but where $T$ is a data-driven, sparse tapering matrix, with entries given by a data-dependent recursion formula, 
not a pre-specified, non-random, tapering function.  This representation has both practical and theoretical implications:
 it allows us to compute our estimator efficiently, relate our
 estimator to previous banded or tapered estimators, and establish that it
 is banded and positive definite with high probability.  These issues are treated in Sections \ref{sec:tapering} and \ref{sec:pd}, respectively. 


In Section \ref{sec:recovery}, we prove that, when the population 
covariance matrix is itself a banded matrix, 
 our estimator recovers the sparsity pattern, under minimal signal strength conditions, which are made 
possible by the fact that we employ a hierarchical penalty.  In
Section \ref{sec:minimax} we show that our convex banding estimator is minimax rate adaptive, in both Frobenius and operator norms, up to 
multiplicative logarithmic factors, over appropriately defined classes of population matrices.  In
Section \ref{sec:empirical}, we perform a thorough simulation study that
investigates the sharpness of our bounds and demonstrates that our estimator compares favorably to previous methods.  We
conclude with a real data example in which we show that using our
estimate of the covariance matrix leads to improved classification
accuracy in both quadratic and linear discriminant analysis.

\section{The definition of the convex banding estimator}

\label{sec:proposal}


To describe our estimator, we must begin
by defining a set of groups that will induce the desired
banded-sparsity pattern.  We define
$$\gl=\{jk\in[p]^2:|j-k|\ge p-\ell\}$$
 to be the two right triangles of
  indices formed by taking the $\ell(\ell+1)$ indices that are
  farthest from the main diagonal of a $p\times p$ matrix. See the left
  panel of Figure \ref{fig:gl} for a depiction of $g_3$ when $p=5$.  
  For notational ease we will denote, as above,  $jk= (j, k)$, and $[p]^2 = \{1, \dots, p\} \times \{1, \ldots, p\}$.
  
  We will
  also find it useful to express these groups as a union of
  subdiagonals $s_m$:
$$
\gl=\bigcup_{m=1}^\ell s_m\quad\text{ where }\quad s_m=\{jk:|j-k|=p-m\}.
$$
\noindent For example, $g_1=s_1=\{(1,p), (p,1)\}$ and, at the opposite
extreme, $g_{p-1}^c=s_p$
is the diagonal.  While the
indexing of $g_\ell$ and $s_m$ may at first seem ``backwards'' (in the
sense that we count them from the outside-in rather than from the
diagonal-out), our indexing is natural here because
$|s_m|=2m$ and $g_\ell$ consists of two  equilateral triangles with  side-lengths of $\ell$ elements. The right panel of Figure \ref{fig:gl} depicts the nested group structure: $g_{1}\subset \cdots\subset
  g_{p-1}$, where this largest group contains all off-diagonal
  elements.
  \begin{figure}
    \centering
    \includegraphics[width=0.3\linewidth]{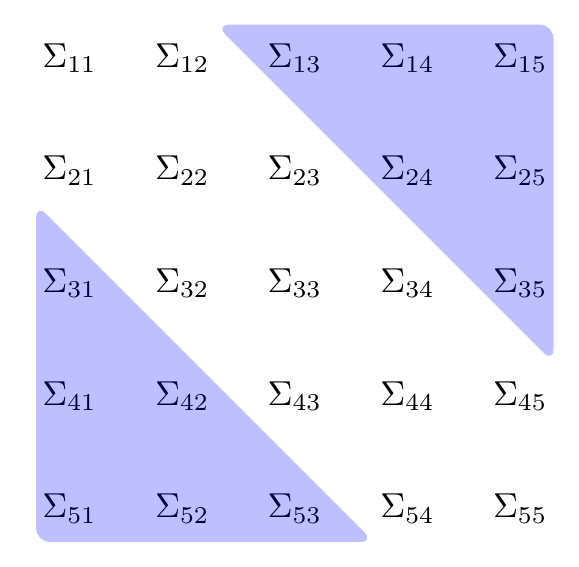}
    \includegraphics[width=0.3\linewidth]{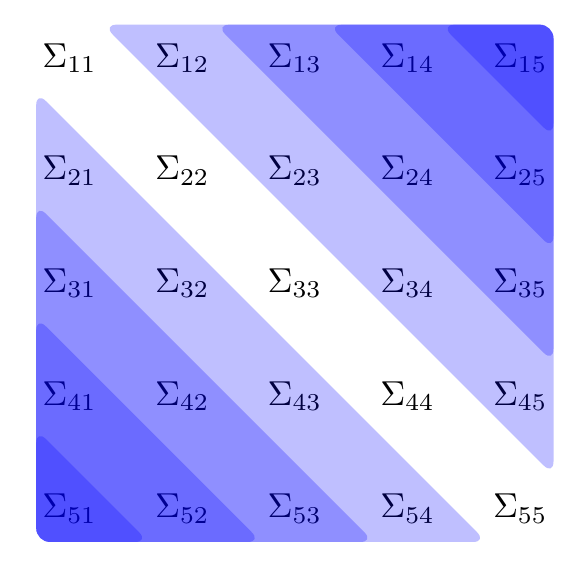}
    \caption{\em (Left) The group $g_3$;  (Right)
      the nested groups in the penalty \eqref{eq:primal}.}
    \label{fig:gl}
  \end{figure}

The following notation is used in the definition of our estimator below. Given a subset of matrix  indices $g\subseteq[p]^2$ of a
  $p\times p$ matrix
  $\Sigma$, let $\Sigma_g\in\real^{|g|}$ be the
  vector with elements $\{\Sigma_{jk}:jk\in g\}$.   For a given non-negative sequence of weights $w_{\ell m}$
  with $1\le m\le\ell\le p$, discussed in the following sub-section,
  and for a given $\lambda\ge0$, let $\hatP$ be defined as the solution to the following convex optimization
  problem:
  \begin{align}
    \label{eq:primal}
    \hatP = \arg\min_{\Sigma}\left\{ \half\|\Sigma-\S\|_F^2+\lambda\|\Sigma\|_{2,1}^* \right \},
  \end{align}
where $\|\cdot\|_F$ is the Frobenius norm and 
\begin{align}
  \label{eq:penalty}
  \|\Sigma\|_{2,1}^*=\sum_{\ell=1}^{p-1}\sqrt{\sum_{m=1}^\ell w_{\ell m}^2\|\Sigma_{s_m}\|_2^2}=\sum_{\ell=1}^{p-1}\|(W^{(\ell)}*\Sigma)_{g_\ell}\|_2.
\end{align}
Our penalty term is a weighted group lasso, using
$\{g_\ell:1\le\ell\le p-1\}$ as
the group structure.  For the second equality, we express the penalty
as the elementwise product, denoted by $*$, of $\Sigma$ with a sequence of
weight matrices, $W^{(\ell)}$,  that are Toeplitz with
$
W^{(\ell)}_{s_m}=
  w_{\ell m}1_{\{m\le\ell\}}\cdot1_{2m}.
$
\begin{rem}
\textnormal{Problem \ref{eq:primal} is strictly convex, so  $\hatP$ is the unique solution.}
\end{rem}
\begin{rem}
\textnormal{As the tuning parameter $\lambda$ is increased, subdiagonals of
$\hatP$ become exactly zero.  As long as $w_{\ell m}>0$ for all
$1\le m\le\ell\le p-1$, the hierarchical group structure ensures that
a subdiagonal will only be set to zero if all elements farther from
the main diagonal are already zero.  Thus, we get an estimated
bandwidth, $\hat K$, which satisfies $\hatP_{g_{p-\hat
    K-1}}=0$ and $\hatP_{s_{p-\hat K}}\neq 0$ (see Theorem
\ref{thm:tapering} and Corollary \ref{cor:banded} for details).}
\end{rem}
We refer to $\hatP$ as the {\em convex banding} of
the matrix $\S$.  We show in Section \ref{sec:pd} that $\hatP$ is positive definite with high probability.  Empirically, we find
that positive definiteness holds except in very extreme cases.
Moreover, in Section \ref{sec:pd} we propose a different version of
convex banding that guarantees positive definiteness:
\begin{align}\label{eq:tilde}
  \tildeP= \arg\min_{\Sigma\succeq\delta I_p}\left\{ \half\|\Sigma-\S\|_F^2+\lambda\|\Sigma\|_{2,1}^*\right\}.
\end{align}
Of course when $\lambda_{\min}(\hatP)\ge\delta$, the two estimators
coincide.

\subsection{The weight sequence $w_{\ell m}$}
\label{sec:weights}

The behavior of our estimator is dependent on the choice
of weights $w_{\ell m}$.  Since for each $\ell$, the weight $w_{\ell m}$ penalizes $s_{m}$, $ 1 \leq m \leq \ell$, and the subdiagonals $s_{m}$ increase in size with $m$, 
we want to give the largest subdiagonals the largest weight.  We will
therefore choose $w_{\ell m}$ with the following property: \\

\begin{property}\label{prop:weights-1}
  For each $1 \leq \ell \leq p - 1$, $w_{\ell\ell} = \max_{1 \leq m
    \leq \ell}w_{\ell m}$. We let $w_{\ell} := w_{\ell\ell}$.
\end{property}

\noindent  We consider three choices of weights satisfying Property
\ref{prop:weights-1} that yield estimators with differing behaviors:\\
\noindent {\em 1. (Non-hierarchical) Group lasso penalty.}
Let \begin{equation}\label{eq:groupw}
w_{\ell m } =1_{\{\ell = m\}} \sqrt{2\ell}, \ \mbox{for} \   1\leq
m\leq \ell, 1\leq \ell \leq p-1.
\end{equation}
Notice that  $w_{\ell} = \sqrt{2\ell}$ and 
$$
\|\Sigma\|_{2,1}^*=  \sum_{\ell=1}^{p-1}\sqrt{2\ell}\|\Sigma_{s_{\ell}}\|_2.
$$
This is a traditional group lasso penalty that acts on
subdiagonals. The size of $w_{\ell}$ is in concordance with the size
of ${s_{\ell}}$. 
Note that
this penalty is not hierarchical, as each subdiagonal may be set to
zero independently of the others. In Section \ref{sec:recovery} we
show how failing to use a hierarchical penalty requires more stringent
minimum signal conditions in order to correctly recover the true
bandwidth of $\true$.  However, if the interest is in accurate estimation in
Frobenius or operator norm of population  matrices that are
close to banded, we show in Section \ref{sec:F-norm} that this
estimator is minimax rate optimal, up to logarithmic factors,  although in finite samples it may fail to have the correct sparsity pattern.\\
\noindent {\em 2. Basic hierarchical penalty.}
If $w_{\ell m} = \sqrt{2\ell}$, for all $m$ and $\ell$, the penalty term becomes
\begin{equation}\label{eq:basicw}
\|\Sigma\|_{2,1}^*=  \sum_{\ell=1}^{p-1}\sqrt{2\ell}\|\Sigma_{g_{\ell}}\|_2,
\end{equation}
which employs the same weight, $\sqrt{2\ell}$, as above, but now for a
triangle $g_{\ell}$. Recalling that $|g_{\ell}|=\ell(\ell+1)$, 
we note that this does not follow the common principle guiding weight
choices in the (non-overlapping) group lasso literature of using $\sqrt{|g_\ell|}$. In
Section \ref{sec:recovery}, 
we show however that $\sqrt{2\ell}$ is indeed the appropriate weight
choice for consistent bandwidth selection under minimal conditions on
the strength of the signal.  It turns out, however, that  this choice of
weights is not refined enough for rate optimal estimation of $\true$
with respect to either Frobenius or operator norm.  In particular,
consider the fact that the subdiagonal $s_m$ is included in $p-m$
terms of the penalty in \eqref{eq:basicw}.  Subdiagonals far from the
main diagonal (small $m$) are thus excessively penalized.  To
balance this overaggressive enforcement of hierarchy, one desires
weights that decay with $m$ within a fixed group $g_\ell$ and yet
still exhibit a similar $\sqrt{2\ell}$ growth on $w_{\ell\ell}$.\\
\noindent {\em 3. General hierarchical penalty}. Based on the considerations
  above, we take the following choice of weights:
\begin{equation}\label{eq:genw}
w_{\ell,m} = \frac{\sqrt {2\ell}}{\ell - m +1},  \ \mbox{for}\  1 \leq m \leq \ell, \  1 \leq \ell \leq p - 1.
\end{equation}

Once again, $w_{\ell} =: w_{\ell \ell}= \sqrt{2\ell}$. We will show in Sections
\ref{sec:F-norm}  and \ref{sec:op-norm} that the corresponding estimator is  minimax rate adaptive, in Frobenius  and operator norm, 
 up  to logarithmic factors, over appropriately defined classes of population covariance matrices.  
 
 \begin{rem}
\textnormal{We re-emphasize the difference between the weighting schemes considered above. 
 The estimators corresponding to \eqref{eq:basicw} and \eqref{eq:genw}
 will always impose the sparsity structure of a banded matrix, a fact
 that is apparent from Theorem \ref{thm:tapering} below. 
In contrast, the group lasso estimator (\ref{eq:groupw}), which is performed
on each sub-diagonal separately,  may fail to recover this pattern.}
\end{rem}

\section{Computation and properties}

\label{sec:computation}
The most common approach to solving the standard group lasso is
blockwise coordinate descent (BCD).  \citet{Tseng01} proves that when the
nondifferentiable part of a convex objective is separable, BCD is
guaranteed to solve the problem.  Unfortunately, this
separable structure is not present in \eqref{eq:primal}.  As in
\citet{Jenatton10}, we consider instead the dual problem, which does
possess this separability property, meaning that BCD on the dual will
work.
\begin{theorem}
A dual of \eqref{eq:primal} is given by
  \begin{align}
  &\minimize_{\Al\in\real^{p\times p}}\half
    \left\|\S-\lambda\sum_{\ell=1}^{p-1}
      W^{(\ell)}*\Al\right\|_F^2\quad\st\quad\|\Al_{\gl}\|_2\le 1,~\Al_{\gl^c}=0\mathrm{~for~}1\le\ell\le p-1.
\label{eq:dual}
\end{align}
In particular, given a solution to the dual, $(\hat A^{(1)},\ldots,\hat
A^{(p-1)})$, the solution to \eqref{eq:primal} is given by
\begin{align}
  \hatP=\S-\lambda\sum_{\ell=1}^{p-1}
  W^{(\ell)}*\hAl.\label{eq:primal-dual}
\end{align}
\label{thm:dual}
\end{theorem}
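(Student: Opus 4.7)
The plan is to derive the dual by the standard recipe: introduce dual variables via the variational representation of the $\ell_2$ norm, swap the min and max, and then eliminate the primal variable in closed form.

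First, observe that each weight matrix $W^{(\ell)}$ is supported on $g_\ell$ by its definition, so for any $\Sigma$ we have $\|(W^{(\ell)}*\Sigma)_{g_\ell}\|_2 = \|W^{(\ell)}*\Sigma\|_F$, and the dual characterization of the Frobenius norm gives
\[
\|W^{(\ell)}*\Sigma\|_F = \max_{A^{(\ell)}:\,\|A^{(\ell)}_{\gl}\|_2\le 1,\,A^{(\ell)}_{\gl^c}=0}\langle A^{(\ell)},\,W^{(\ell)}*\Sigma\rangle,
\]
where the support constraint $A^{(\ell)}_{\gl^c}=0$ is free to impose since those entries of $A^{(\ell)}$ contribute nothing to the inner product. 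Summing over $\ell$, problem \eqref{eq:primal} becomes the saddle problem
\[
\min_{\Sigma}\max_{\{A^{(\ell)}\}}\Bigl\{\tfrac12\|\Sigma-\S\|_F^2+\lambda\sum_{\ell=1}^{p-1}\langle W^{(\ell)}*A^{(\ell)},\,\Sigma\rangle\Bigr\},
\]
with the $A^{(\ell)}$'s ranging over the indicated constraint sets.

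Next I would swap the min and max. This is justified because the objective is convex–concave (in fact affine in $A^{(\ell)}$ and strongly convex in $\Sigma$) and the dual feasible set is convex and compact, so Sion's minimax theorem applies. After the swap the inner minimization over $\Sigma$ is an unconstrained quadratic; setting its gradient to zero gives the primal–dual stationarity relation
\[
\Sigma = \S - \lambda\sum_{\ell=1}^{p-1} W^{(\ell)}*A^{(\ell)},
\]
which is precisely \eqref{eq:primal-dual} once we insert the dual optimizers.

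Substituting this $\Sigma$ back into the objective, a brief calculation (denote $B=\sum_\ell W^{(\ell)}*A^{(\ell)}$) gives the value $\lambda\langle B,\S\rangle-\tfrac{\lambda^2}{2}\|B\|_F^2$, which equals $\tfrac12\|\S\|_F^2-\tfrac12\|\S-\lambda B\|_F^2$. Maximizing this over the $A^{(\ell)}$'s is therefore equivalent to minimizing $\tfrac12\|\S-\lambda\sum_\ell W^{(\ell)}*A^{(\ell)}\|_F^2$ subject to $\|A^{(\ell)}_{\gl}\|_2\le 1$ and $A^{(\ell)}_{\gl^c}=0$, which is exactly \eqref{eq:dual}.

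The step I expect to require the most care is the minimax swap and the assertion that any dual optimum $\{\hAl\}$, combined with the stationarity relation, yields the unique primal optimum; I would handle this by noting that the primal is strictly convex (so $\hatP$ is unique by Remark 1) and that strong duality holds with zero duality gap, so the formula \eqref{eq:primal-dual} must recover this unique minimizer. The rest is bookkeeping: verifying that the support constraint $A^{(\ell)}_{\gl^c}=0$ can be imposed without loss of generality and that the constants dropped in completing the square do not affect the argmin.
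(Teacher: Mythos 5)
Your proof is correct and follows essentially the same route as the paper's: the same variational representation of each group norm in terms of a constrained dual variable $A^{(\ell)}$, followed by a min--max swap and elimination of $\Sigma$ via the first-order condition. You are slightly more explicit than the paper in justifying the swap (via Sion) and in carrying out the back-substitution that produces $\tfrac12\|\S\|_F^2-\tfrac12\|\S-\lambda\sum_\ell W^{(\ell)}*A^{(\ell)}\|_F^2$, but the underlying argument is the same.
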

\begin{proof}
  See Appendix \ref{sec:proof-dual}.
\end{proof}
\noindent Algorithm \ref{alg:psd} gives a BCD algorithm for solving
\eqref{eq:dual}, which by the primal-dual relation in
\eqref{eq:primal-dual} in turn gives a solution to \eqref{eq:primal}.
The blocks correspond to each dual variable matrix.
The update over each $\Al$ involves projection onto an
ellipsoid, which amounts to finding a root of the univariate function,
\begin{align}
  h_\ell(\nu)=\sum_{m=1}^\ell\frac{w_{\ell m}^2}{(w_{\ell
      m}^2+\nu)^2}\|\hat R^{(\ell)}_{s_m}\|^2.\label{eq:nu}
\end{align}
We explain in Appendix \ref{sec:ellipsoid-projection} the details of
ellipsoid projection and observe that we can get $\hat\nu_\ell$ in closed
form for all but the last $\hat K$ values of $\ell$.  A remarkable
feature of our algorithm is that only a single pass of BCD is required
to reach convergence.  This property is proved in \citet{Jenatton11}
and is a direct consequence of the nested structure of the problem.

\begin{algorithm}[t]
\caption{BCD on dual of Problem \eqref{eq:primal}.}
\emph{Inputs:} $\S,~\lambda$, and weights matrices,
$W^{(\ell)}$. Initialize $\hat A^{(\ell)}=0$ for all $\ell$.\\

For $\ell=1,\ldots, p-1$:
  \begin{itemize}
  \item Compute $\hat R^{(\ell)}\leftarrow
    \S-\lambda\sum_{\ell'=1}^{p-1} W^{(\ell')}*\hat A^{(\ell')}$
  \item For $m\le\ell$, set $\hat A_{s_m}^{(\ell)}\leftarrow \frac{w_{\ell m}}{\lambda(w_{\ell m}^2+\max\{\hat\nu_\ell,0\})}\hat
      R^{(\ell)}_{s_m}$ where $\hat\nu_\ell$ satisfies
      $\lambda^2=h_\ell(\hat\nu_\ell)$, as in \eqref{eq:nu}.
  \end{itemize}
$\{\hat A^{(\ell)}\}$ is a solution to \eqref{eq:dual} and $\hat
R^{(p)}=\S-\lambda\sum_{\ell=1}^{p-1} W^{(\ell)}*\hAl$ is the solution to \eqref{eq:primal}. \label{alg:hat}
\end{algorithm}
When we use the simple weights \eqref{eq:basicw}, in which $w_{\ell
  m}=w_\ell$, Algorithm \ref{alg:hat} becomes extraordinarly simple
and transparent:
  \begin{enumerate}
  \item Initialize $\hatP\leftarrow\S$
  \item For $\ell=1,\ldots,p-1$: $\hatP_{\gl}\leftarrow(1-\lambda w_\ell/\|\hatP_{\gl}\|_2)_+\hatP_{\gl}$.
  \end{enumerate}
In words, we start with the sample covariance matrix, and then work
from the corners of the matrix inward
toward the main diagonal.  At each step, the next largest
triangle-pair is group-soft-thresholded.  If a triangle is ever set to
zero, it will remain zero for all future steps.  We will show that
this simple weighting scheme admits exact bandwidth and pattern recovery in
Section \ref{sec:recovery}.

\subsection{Convex banding as a tapering estimator}
\label{sec:tapering}
The next result shows that $\hatP$ can be regarded as a tapering
estimator with a data-dependent tapering matrix.  In Section
\ref{sec:theory}, we will see that, in contrast to estimators that use
a fixed tapering matrix, our estimator adapts to the unknown bandwidth
of the true matrix $\true$.


  \begin{theorem}
    The convex banding estimator, $\hatP$, can be written as a tapering
    estimator with a Toeplitz, data-dependent tapering matrix, $\hatP=\hat
    T*\S$:
$$
\hat T_{s_m}=\begin{cases} 
  1_m&\text{ for }m=p \mathrm{~(diagonal)}\\
\prod_{\ell=m}^{p-1}\frac{[\hat\nu_\ell]_+}{w_{\ell m}^2+[\hat\nu_\ell]_+}1_m&\text{ for
  }1\le m\le p-1
\end{cases}
$$
where $\hat\nu_\ell$ satisfies $\lambda^2=\sum_{m=1}^\ell\frac{w_{\ell m}^2}{(w_{\ell
      m}^2+\hat\nu_\ell)^2}\|\hat R^{(\ell)}_{s_m}\|^2$ and
  $1_m\in\real^m$ is the vector of ones.
\label{thm:tapering}
\end{theorem}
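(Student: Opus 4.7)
The plan is to derive the tapering formula directly from Algorithm \ref{alg:hat} using the primal--dual relation \eqref{eq:primal-dual} of Theorem \ref{thm:dual}, and exploiting the fact that a single pass of BCD suffices to converge. By \eqref{eq:primal-dual},
\[
\hatP_{s_m} \;=\; \S_{s_m} - \lambda\sum_{\ell=1}^{p-1} W^{(\ell)}_{s_m} * \hat A^{(\ell)}_{s_m},
\]
and since $W^{(\ell)}_{s_m} = w_{\ell m}\,1_{2m}$ only when $m\le \ell$ (being zero otherwise), only $\ell \ge m$ contributes. For the diagonal case $m=p$, the groups $g_\ell$ omit the diagonal entirely, so every $\hat A^{(\ell)}_{s_p}=0$ and $\hat T_{s_p}=1_p$. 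This disposes of the diagonal and reduces the claim to an identity for each off-diagonal subdiagonal $s_m$, $1\le m \le p-1$.

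The core step is a telescoping identity for the residuals $\hat R^{(\ell)}_{s_m}$ produced by the BCD sweep. Because $\hat A^{(\ell')}_{s_m}=0$ whenever $\ell'<m$ (its support is contained in $g_{\ell'}$, which does not include $s_m$), the residual at the moment BCD first touches subdiagonal $m$ satisfies
\[
\hat R^{(m)}_{s_m} \;=\; \S_{s_m}.
\]
Then, using the update $\hat A^{(\ell)}_{s_m} = \tfrac{w_{\ell m}}{\lambda(w_{\ell m}^2 + [\hat\nu_\ell]_+)}\,\hat R^{(\ell)}_{s_m}$ and the elementary BCD bookkeeping $\hat R^{(\ell+1)} = \hat R^{(\ell)} - \lambda W^{(\ell)}*\hat A^{(\ell)}$, a direct substitution gives the recursion
\[
\hat R^{(\ell+1)}_{s_m} \;=\; \hat R^{(\ell)}_{s_m} - \frac{w_{\ell m}^2}{w_{\ell m}^2 + [\hat\nu_\ell]_+}\,\hat R^{(\ell)}_{s_m} \;=\; \frac{[\hat\nu_\ell]_+}{w_{\ell m}^2 + [\hat\nu_\ell]_+}\,\hat R^{(\ell)}_{s_m}, \qquad m \le \ell.
\]
Iterating this recursion from $\ell=m$ to $\ell=p-1$ and using $\hat R^{(p)}=\hatP$ yields
\[
\hatP_{s_m} \;=\; \prod_{\ell=m}^{p-1} \frac{[\hat\nu_\ell]_+}{w_{\ell m}^2 + [\hat\nu_\ell]_+}\;\S_{s_m},
\]
which is exactly $\hat T_{s_m} * \S_{s_m}$ with the claimed tapering weights. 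The Toeplitz structure follows because the factor multiplying $\S_{s_m}$ depends only on $m$ (and the $\hat \nu_\ell$'s), not on the individual entry within $s_m$.

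The main obstacle is really conceptual rather than computational: one must justify that a single pass of BCD---as asserted in Algorithm \ref{alg:hat} and attributed to \citet{Jenatton11}---already produces the optimal dual variables, so that the recursion above may be read off the one-pass iterates rather than requiring a fixed-point argument at convergence. The nested structure $g_1 \subset \cdots \subset g_{p-1}$ is what makes this one-pass property available, and it is the feature that permits the telescoping product to appear in closed form. The remaining bookkeeping (correct handling of $[\hat\nu_\ell]_+$ when the unconstrained minimizer already lies inside the ellipsoid, and the fact that $\hat\nu_\ell=0$ forces $\hatP_{s_m}=0$ for all $m\le\ell$, consistently with a vanishing factor in the product) follows routinely.
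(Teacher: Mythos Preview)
Your proof is correct and follows essentially the same route as the paper: invoke the one-pass convergence of BCD due to the nested group structure \citep{Jenatton11}, derive the recursion $\hat R^{(\ell+1)}_{s_m}=\frac{[\hat\nu_\ell]_+}{w_{\ell m}^2+[\hat\nu_\ell]_+}\hat R^{(\ell)}_{s_m}$ from the dual update, and telescope using $\hat R^{(m)}_{s_m}=\S_{s_m}$ and $\hatP=\hat R^{(p)}$. Your write-up is slightly more explicit than the paper's (handling the diagonal separately and spelling out why $\hat R^{(m)}_{s_m}=\S_{s_m}$), but the argument is the same.
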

\begin{proof}
By Proposition 5 in \citet{Jenatton11}, we can get $\hatP$ by a
single pass as in Algorithm \ref{alg:hat}.  We begin with $\hat
R^{(1)}=\S$ and then for $\ell=1,\ldots,p-1$, (and for each
$m\le\ell$), we have
\begin{equation}\label{recur}
  \hat R^{(\ell+1)}_{s_m}=\hat R^{(\ell)}_{s_m} - \lambda w_{\ell m}\hat
  A^{(\ell)}_{s_m}=\frac{[\hat\nu_\ell]_+}{w_{\ell m}^2+[\hat\nu_\ell]_+}\hat
  R^{(\ell)}_{s_m}.
\end{equation}
The optimality conditions give $\hatP = \hat R^{(p)}$, so that we have
\begin{equation}\label{esthat}
\hatP_{s_m}=\prod_{\ell=m}^{p-1}\frac{[\hat\nu_\ell]_+}{w_{\ell m}^2+[\hat\nu_\ell]_+}\cdot\S_{s_m}
\end{equation}
which establishes this as an adaptively tapered estimator. 

\end{proof}
\noindent The following result shows that, as desired, our estimator is banded. 

\begin{cor}
  $\hatP$ is a banded matrix with bandwidth $\hat
  K=p-1-\max\{\ell:\hat\nu_\ell\le0\}$.
\label{cor:banded}
\end{cor}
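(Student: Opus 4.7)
The plan is to derive the corollary directly from the tapering representation established in Theorem \ref{thm:tapering}. By identity \eqref{esthat}, every off-diagonal subdiagonal admits the factorization
$$
\hatP_{s_m} = \left(\prod_{\ell=m}^{p-1}\frac{[\hat\nu_\ell]_+}{w_{\ell m}^2+[\hat\nu_\ell]_+}\right)\S_{s_m},
$$
so the task reduces to determining which of these products vanish. Because the denominator is strictly positive whenever $[\hat\nu_\ell]_+ > 0$, a factor in the product is zero exactly when $[\hat\nu_\ell]_+=0$, i.e., when $\hat\nu_\ell\le 0$. Hence $\hatP_{s_m}=0$ if and only if $\hat\nu_\ell\le 0$ for some index $\ell\in\{m,m+1,\ldots,p-1\}$.

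Next, I would set $L = \max\{\ell : \hat\nu_\ell \le 0\}$, adopting the convention $L=0$ if this set is empty. For every $m\le L$ the index $L$ lies in the product range $\{m,\ldots,p-1\}$, so the factor at $\ell=L$ annihilates the product and $\hatP_{s_m}=0$. Conversely, for $m>L$ every index $\ell$ in the product range satisfies $\ell>L$, so $\hat\nu_\ell>0$ and no factor is zero. This is exactly the hierarchical zeroing property alluded to in Remark 2: once a subdiagonal at index $L$ is set to zero, every subdiagonal farther from the diagonal (smaller $m$) is forced to zero as well.

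Finally, I would translate this indexing statement into a bandwidth statement using $s_m=\{jk : |j-k|=p-m\}$. The vanishing of $s_1,\ldots,s_L$ means that $\hatP_{jk}=0$ whenever $|j-k|\in\{p-L,p-L+1,\ldots,p-1\}$, i.e., whenever $|j-k|>p-1-L$. Consequently $\hatP$ is banded with bandwidth
$$
\hat K = p-1-L = p-1-\max\{\ell : \hat\nu_\ell\le 0\},
$$
which is the claimed identity. The degenerate case in which all $\hat\nu_\ell$ are positive is covered by the convention $L=0$, which yields $\hat K=p-1$ and no imposed sparsity.

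There is no serious obstacle beyond careful bookkeeping: all the analytical work is already packaged in Theorem \ref{thm:tapering}, and the nesting of the product range $\{m,\ldots,p-1\}$ in $m$ is precisely what makes the hierarchical zeroing mechanism transparent. The only point requiring mild care is handling the ``empty max'' case, which is dispatched by the convention above.
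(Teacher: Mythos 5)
Your proof is correct and follows essentially the same route as the paper: both read off from the tapering formula of Theorem \ref{thm:tapering} that the factor at $\ell=\max\{\ell:\hat\nu_\ell\le0\}$ annihilates $\hat T_{s_m}$ for all $m$ up to that index, while all factors are positive beyond it. The only cosmetic caveat is your ``if and only if'' for $\hatP_{s_m}=0$, which ignores the degenerate possibility $\S_{s_m}=0$; this does not affect the bandedness conclusion, which only needs the implication you actually use.
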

\begin{proof}
  By definition, $\hat\nu_{p-1-\hat K}\le0$ and $\hat\nu_{p-\hat
    K},\ldots,\hat\nu_{p-1}>0$.  It follows from the theorem that
  $\hat T_{s_m}=0\cdot1_m$ for all $m\le p-1-\hat K$ and $[\hat
  T_{s_m}]_i>0$ for $p-\hat K\le m\le p-1$.
\end{proof}



\section{Statistical properties of the convex banding estimator}

  \label{sec:theory}
In this section, we study the statistical properties of the convex
banding estimator.  We begin by stating two assumptions that
specify the general setting of our theoretical analysis.\\


\noindent {\it Assumption 1.} \
Let $\mathbf{X} = (X_1,\dots, X_p)^T$. Assume $\mathbb{E} \mathbf{X} = \mathbf{0}$ 
 and denote $\mathbb{E}\mathbf{X}\mathbf{X}^T = \true$.   We assume that each $X_j$ is 
marginally sub-Gaussian:
$$\mathbb{E}\exp(tX_j/\sqrt{\true_{jj}})\leq \exp(Ct^2)$$
for all $t\geq 0$ and for some  constant $C > 0$ that is independent of  $j$. Moreover, $\max_{ij}|\true_{ij}|\le M$,  for some constant $M > 0$. \\

\noindent {\it Assumption 2. } The dimension $p$  can grow with $n$ at most exponentially in $n$:  $\gamma_0 \log n \leq \log p \leq \gamma n$,  for some constants $\gamma_0>0, \gamma>0$.\\

In Section \ref{sec:recovery}, we prove
that our estimator recovers the true
bandwidth of $\true$ with high probability assuming the nonzero
values of $\true$ are large enough.  We will demonstrate that our 
estimator can detect lower signals than what could be recovered by  an
estimator that does not enforce hierarchy,  re-emphasizing  the need for  a  hierarchical penalty.  In Section
\ref{sec:minimax}, we  show that our estimator is minimax adaptive, up to multiplicative logarithmic terms, 
with respect to both the operator and Frobenius norms. Our results hold  either with high probability or in expectation,
and are established over classes of population matrices defined in Sections \ref{sec:F-norm}  and \ref{sec:op-norm}, respectively. 


We begin by introducing a random set, on which all our results hold. Fixing $x>0$, let
\begin{align}
  \mathcal A_x=\left\{\max_{1\leq i, j\leq p} \left|\S_{ij}-\true_{ij}\right|\leq
  x\sqrt{\log p/n}\right\}.\label{eq:whp}
\end{align}
The following lemma shows that this set has high
probability. 
\begin{lem}
Under Assumptions 1 and 2,  there exists a constant $c>0$ such that for sufficiently large $x>0$, 
$$
\mathbb{P}(\mathcal A_x) \geq 1-\frac{c}{p}.
$$
\label{lem:whp}
\end{lem}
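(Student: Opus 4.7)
The plan is to prove this by a standard two-step argument: a per-entry sub-exponential concentration inequality, followed by a union bound over the $p^2$ entries. The hypotheses are tailored exactly to this route: Assumption 1 makes each product $X_i X_j$ sub-exponential, and Assumption 2 prevents $\log p$ from growing too fast relative to $n$.

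First, I would fix a pair $(i,j)$ and write
\[
  \S_{ij}-\true_{ij}=\frac1n\sum_{k=1}^n\bigl(X_{ki}X_{kj}-\true_{ij}\bigr)-\bar X_i\bar X_j.
\]
The first term is an average of i.i.d.\ centered random variables. Since each $X_j/\sqrt{\true_{jj}}$ is sub-Gaussian with a uniform constant, and $\true_{jj}\le M$, the product $X_{ki}X_{kj}$ is sub-exponential, with a sub-exponential norm bounded by a constant multiple of $M$ (Cauchy--Schwarz on the moment generating function, plus $\max_{ij}|\true_{ij}|\le M$ controls the mean). Applying Bernstein's inequality for sub-exponential variables yields a tail bound of the form
\[
  \mathbb{P}\!\left(\Bigl|\tfrac1n\sum_k\!\bigl(X_{ki}X_{kj}-\true_{ij}\bigr)\Bigr|>t\right)\le 2\exp\!\bigl(-c_1 n\min(t^2/M^2,\,t/M)\bigr).
\]
Setting $t=\tfrac{x}{2}\sqrt{\log p/n}$ and using $\log p\le\gamma n$ (Assumption 2) to ensure we are in the Gaussian regime $t^2/M^2\le t/M$, this becomes at most $2\exp(-c_2 x^2\log p)=2p^{-c_2 x^2}$.

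The centering term $\bar X_i \bar X_j$ is treated analogously: each $\bar X_i$ is itself sub-Gaussian with parameter of order $\sqrt{M/n}$, so $|\bar X_i\bar X_j|\le \tfrac{x}{2}\sqrt{\log p/n}$ with probability at least $1-2p^{-c_3 x^2}$, provided $n\ge c_4\log p$ (again guaranteed by Assumption 2). Combining the two bounds, for each fixed $(i,j)$,
\[
  \mathbb{P}\!\bigl(|\S_{ij}-\true_{ij}|>x\sqrt{\log p/n}\bigr)\le 4p^{-c_5 x^2}.
\]

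Finally, I would take a union bound over all $p^2$ pairs to obtain
\[
  \mathbb{P}(\mathcal A_x^c)\le 4p^{\,2-c_5 x^2}.
\]
Choosing $x$ large enough that $c_5 x^2\ge 3$ gives the conclusion $\mathbb{P}(\mathcal A_x)\ge 1-c/p$ for a suitable constant $c$. The only genuinely technical step is the sub-exponential moment bound on $X_{ki}X_{kj}$ (the rest is textbook Bernstein plus a union bound); I do not expect any real obstacle, since it follows directly from the standard identity that a product of two sub-Gaussians is sub-exponential with norm bounded by the product of their sub-Gaussian norms, and Assumption 1 gives uniform control of those norms via $\max_{ij}|\true_{ij}|\le M$.
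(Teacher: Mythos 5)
Your proposal is correct and follows essentially the same route as the paper's proof (Lemma~\ref{lem:whp-0} and Theorem~\ref{thm:whp} in the appendix): the identical decomposition $\S_{ij}-\true_{ij}=n^{-1}\sum_k(X_{ki}X_{kj}-\true_{ij})-\bar X_i\bar X_j$, a Bernstein-type bound for the sub-exponential products, a sub-Gaussian tail for the $\bar X_i\bar X_j$ term, and a union bound over entries with $x$ taken large. The only minor imprecision is that Assumption~2 does not actually guarantee the Gaussian regime $t\le M$ when $x$ is large, but this is harmless: in the sub-exponential regime the exponent is still of order $x\sqrt{n\log p}\gtrsim x\log p$, exactly as in the paper's own second term, so the union bound and conclusion go through unchanged.
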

\begin{proof}
See Appendix \ref{sec:lem-whp},  proof of (\ref{lemma1}) of Theorem \ref{thm:whp}.
\end{proof}
\begin{rem}
\textnormal{ (i) Similar results exist in the literature.  Lemma 3 in \cite{Bickel08band} is proved under a Gaussianity assumption coupled with the assumption that $\|\true\|_{op}$ is bounded.
Whereas inspection of the proof shows that the latter is not needed, we  cannot  quote this result  directly for other types of design. The 
commonly employed assumptions  of sub-Gaussianity are placed on the entire vector ${\bf X} = (X_1, \ldots, X_p)^T$  and postulate that there exists $\tau > 0$ 
such that 
\[ \mathbb{P}\{ |v^T({\bf X} - \mathbb{E}({\bf X}))| > t \} \leq e^{-t^2/(2\tau)}, \ \mbox{for all} \ t > 0 \ \mbox{and} \  \|v\|_2 = 1,\]
see, for instance, \cite{Cai12breg}.  However, if such a $\tau$ exists, then $\|\true\|_{op} \leq \tau$. Lemma \ref{lem:whp}  above shows that a bound on  $\|\true\|_{op}$
 can be avoided in the probability bounds regarding $\max_{ij}\left|\S_{ij}-\true_{ij}\right|$, and the distributional assumption can be weakened to marginals. }
 
\textnormal{ (ii)  In the classical case in which $p$  does not depend on   $n$, one can modify the definition of the set $\mathcal{A}_x$ by replacing the factor $\log \ p$ by $\log \max({n,p})$,  and the result of Lemma \ref{lem:whp} will continue to hold with probability $1 - c/\max({n, p})$,  for a possibly different constant $c$. }
\end{rem}


\subsection{Exact bandwidth recovery}
\label{sec:recovery}
Suppose $\true$ has bandwidth $K$, that is, for
$L=p-K-1$, we have
$\true_{g_{L}}=0$ and $\true_{s_{L+1}}\neq0$ (see Figure
\ref{fig:L-and-K}).
\begin{figure}
  \centering
  \includegraphics[width=0.4\linewidth]{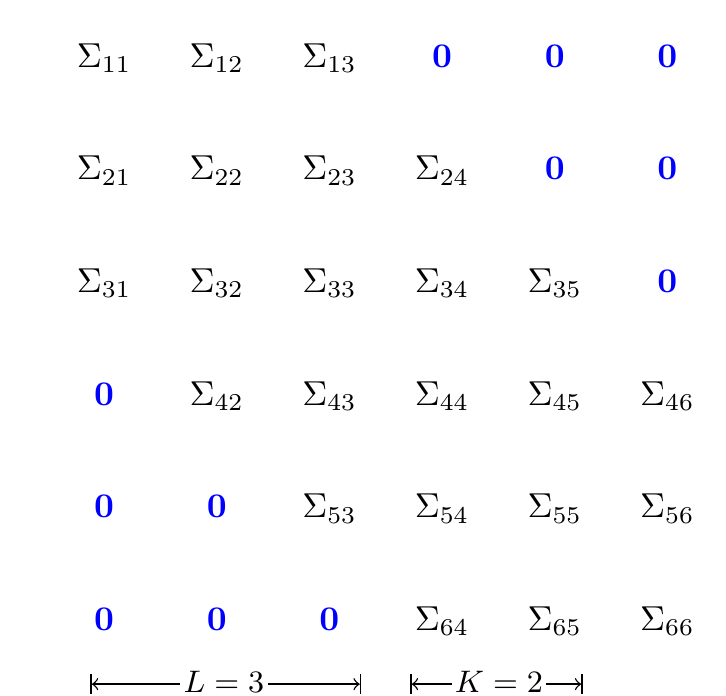}
  \caption{A $K$-banded matrix where $K=2$ and $p=6$.  Note that $L=p-K-1=2$ counts the number of subdiagonals that are zero.  This can be expressed as $\true_{g_L}=0$ or $\true_{s_\ell}=0$ for $1\le\ell\le L$. }
  \label{fig:L-and-K}
\end{figure}
We prove in this
section that under mild conditions our estimator $\hatP$ correctly recovers $K$ with high
probability.  The next theorem expresses the intuitive result that if
$\lambda$ is chosen sufficiently large, we will not over-estimate the
true bandwidth.
\begin{theorem}
If $\lambda\ge x\sqrt{\log p/n}$ and $w_\ell=\sqrt{2\ell}$, then
$
\hat K\le K, 
$
on the set $\mathcal{A}_{x}$. 
\label{thm:recovery1}
\end{theorem}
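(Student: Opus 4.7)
My plan is to exploit the especially transparent form the BCD algorithm takes under the basic hierarchical weights $w_\ell=\sqrt{2\ell}$, namely the two-step procedure listed immediately after Algorithm~\ref{alg:hat}: initialize $\hatP\leftarrow\S$ and then, for $\ell=1,\ldots,p-1$, perform $\hatP_{g_\ell}\leftarrow (1-\lambda w_\ell/\|\hatP_{g_\ell}\|_2)_+\hatP_{g_\ell}$. This step either rescales $\hatP_{g_\ell}$ by a factor in $[0,1)$ or sets it to exactly zero, according as $\|\hatP_{g_\ell}\|_2>\lambda w_\ell$ or not. By Corollary~\ref{cor:banded}, the conclusion $\hat K\le K$ is equivalent to $\hatP_{g_L}=0$ at termination, where $L=p-K-1$; and, by the bandwidth assumption on $\true$, we have $\true_{g_L}=0$.

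The heart of the argument will be an induction on $\ell\in\{1,\ldots,L\}$ showing that, just before step $\ell$ is executed, $\|\hatP_{g_\ell}\|_2\le\lambda w_\ell$, so that step $\ell$ annihilates $\hatP_{g_\ell}$. For $\ell=1$, only the corner pair $s_1$ contributes; since $\true_{s_1}=0$, the event $\mathcal{A}_x$ gives $\|\S_{s_1}\|_2\le\sqrt{2}\,x\sqrt{\log p/n}\le\lambda w_1$. For the inductive step, the hypothesis that step $\ell-1$ has zeroed $\hatP_{g_{\ell-1}}$ lets me decompose $\|\hatP_{g_\ell}\|_2^2=\|\hatP_{g_{\ell-1}}\|_2^2+\|\hatP_{s_\ell}\|_2^2=0+\|\S_{s_\ell}\|_2^2$, using the fact that $s_\ell$ is disjoint from every $g_{\ell'}$ with $\ell'<\ell$ and so has not yet been modified. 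Because $\ell\le L$, each index in $s_\ell$ lies in $g_L$, giving $\true_{s_\ell}=0$, so on $\mathcal{A}_x$, $\|\S_{s_\ell}\|_2\le\sqrt{2\ell}\,x\sqrt{\log p/n}\le\lambda w_\ell$, closing the induction. After step $L$ we have $\hatP_{g_L}=0$, and each remaining step $\ell>L$ only multiplies $\hatP_{g_\ell}\supset\hatP_{g_L}$ by a nonnegative scalar, so the zero pattern on $g_L$ persists to termination; invoking Corollary~\ref{cor:banded} yields $\hat K\le K$.

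The delicate accounting, and the reason $\lambda\gtrsim\sqrt{\log p/n}$ suffices rather than a threshold that grows with the full triangle size $\sqrt{|g_\ell|}=\sqrt{\ell(\ell+1)}$, is that hierarchy produces an \emph{exact} zero at every earlier step, so at step $\ell$ I need only bound the noise contribution from the fresh subdiagonal $s_\ell$ of size $2\ell$, matched precisely by the weight $w_\ell=\sqrt{2\ell}$. A non-hierarchical penalty would instead force a bound on $\|\S_{g_\ell}\|_2$ as a whole, costing a $\sqrt{\ell(\ell+1)}$ factor and requiring a substantially larger tuning parameter to recover the true bandwidth; seeing why this cancellation works at every step, rather than only for $\ell=1$, is the main conceptual step.
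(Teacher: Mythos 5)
Your proof is correct for the basic hierarchical weights $w_{\ell m}=\sqrt{2\ell}$ (all $m\le\ell$), and the inductive skeleton is the same as the paper's: start at $\ell=1$, use $\true_{s_\ell}=0$ and the $\mathcal A_x$ bound to show the fresh sub-diagonal $s_\ell$ is below threshold, then observe that because everything interior to $g_{\ell-1}$ has already been annihilated, only $s_\ell$ contributes at step $\ell$. The cancellation you highlight at the end --- needing to control only $\|\S_{s_\ell}\|_2$ rather than $\|\S_{g_\ell}\|_2$ --- is exactly the content of the paper's inductive step.

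The one gap is scope. The theorem constrains only the diagonal weights $w_\ell=w_{\ell\ell}=\sqrt{2\ell}$, and (per the remark following Theorem~\ref{thm:recovery3}) is asserted for all three weighting schemes of Section~\ref{sec:weights}, including the general hierarchical weights \eqref{eq:genw} on which the minimax results hinge. The two-step group-soft-threshold recipe you invoke is special to the case $w_{\ell m}\equiv w_\ell$; for \eqref{eq:genw} the BCD step is an ellipsoid projection and the zeroing criterion is $h_\ell(0)\le\lambda^2$ with $h_\ell(0)=\sum_{m=1}^{\ell}\|\hat R^{(\ell)}_{s_m}\|_2^2/w_{\ell m}^2$, which is \emph{not} equivalent to $\|\hatP_{g_\ell}\|_2\le\lambda w_\ell$ when the $w_{\ell m}$ differ across $m$. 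The paper's proof runs the same induction through $h_\ell(0)$: the inductive hypothesis $\hat R^{(\ell+1)}_{g_\ell}=0$ kills every term except $m=\ell+1$, collapsing $h_{\ell+1}(0)$ to $\|\S_{s_{\ell+1}}\|_2^2/w_{\ell+1,\ell+1}^2$, after which the $\mathcal A_x$ bound gives $\le\lambda^2$ exactly as you argue. So your argument is one change of notation away from the general case; you should either state the restriction to \eqref{eq:basicw} explicitly or replace the group-soft-threshold check by the $h_\ell(0)\le\lambda^2$ criterion so the conclusion covers the weights actually used in Section~\ref{sec:F-norm}.
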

\begin{proof}
  See Appendix \ref{sec:bandwidth-recovery-proof}.
\end{proof}
For our estimator to be able to detect the full band, we must require
that the ``signal'' be sufficiently large relative to the noise.  In
the next theorem we measure the size of the signal by the $\ell_2$
norm of each sub-diagonal (scaled in proportion to the square root of its size, $w_\ell=\sqrt{2\ell}$) and the size of the noise by $\lambda$. 
\begin{theorem}
If 
\begin{equation}
\label{signal}
\min_{\ell\ge L+1} \|\true_{s_{\ell}}\|_2/w_{\ell} >2\lambda,
\end{equation}
where $\lambda \geq x\sqrt{\log p/n}$ and $w_{\ell}=\sqrt{2\ell}$, then
$
K\le \hat K
$
on the set $\mathcal A_x$.
\label{thm:recovery2}
\end{theorem}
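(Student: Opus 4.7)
The plan is to show that, on $\mathcal A_x$, the signal condition forces $\hat\nu_\ell > 0$ for every $\ell \ge L+1$. By Corollary \ref{cor:banded} this gives $\max\{\ell:\hat\nu_\ell\le 0\}\le L$, hence $\hat K \ge p-1-L = K$. Since $h_\ell(\nu)$ is strictly decreasing in $\nu$ on $(0,\infty)$ whenever at least one summand is non-zero, the defining equation $\lambda^2 = h_\ell(\hat\nu_\ell)$ together with $h_\ell(\infty) = 0$ shows that $\hat\nu_\ell > 0$ is equivalent to the strict inequality $h_\ell(0) > \lambda^2$. Thus it suffices to verify this inequality for each $\ell \ge L+1$ on $\mathcal A_x$.

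The key step, and the one that requires the most care, is to observe that at step $\ell$ of Algorithm \ref{alg:hat} the residual on the innermost subdiagonal $s_\ell$ has not yet been modified: $\hat R^{(\ell)}_{s_\ell} = \S_{s_\ell}$. Indeed, $W^{(\ell')}$ is supported on $g_{\ell'}$, which contains $s_\ell$ only when $\ell' \ge \ell$; hence the updates $\hat A^{(\ell')}$ performed at steps $\ell' < \ell$ leave the entries on $s_\ell$ untouched, and $\hat A^{(\ell')} = 0$ for $\ell' \ge \ell$ at that point in the algorithm. Dropping all but the $m=\ell$ term from $h_\ell(0)$, and using $w_{\ell\ell} = w_\ell = \sqrt{2\ell}$, this yields
\begin{equation*}
h_\ell(0) \;\ge\; \frac{1}{w_\ell^2}\|\S_{s_\ell}\|_2^2.
\end{equation*}

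It then remains to lower bound $\|\S_{s_\ell}\|_2$. On $\mathcal A_x$, since $|s_\ell| = 2\ell$, the reverse triangle inequality yields
\begin{equation*}
\|\S_{s_\ell}\|_2 \;\ge\; \|\true_{s_\ell}\|_2 - \|\S_{s_\ell} - \true_{s_\ell}\|_2 \;\ge\; \|\true_{s_\ell}\|_2 - \sqrt{2\ell}\,x\sqrt{\log p/n} \;\ge\; \|\true_{s_\ell}\|_2 - w_\ell\lambda,
\end{equation*}
and the signal condition $\|\true_{s_\ell}\|_2 > 2 w_\ell \lambda$ then gives $\|\S_{s_\ell}\|_2 > w_\ell\lambda$, so that $h_\ell(0) > \lambda^2$, completing the argument. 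Notably, no induction on $\ell$ is needed and the behavior of $\hat R^{(\ell)}$ on subdiagonals with $m < \ell$ never enters, because the $m = \ell$ contribution alone already exceeds the threshold; the entire difficulty is concentrated in the support observation $\hat R^{(\ell)}_{s_\ell} = \S_{s_\ell}$.
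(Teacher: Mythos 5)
Your proof is correct and follows essentially the same route as the paper's: it reduces the claim to $h_\ell(0)>\lambda^2$ for all $\ell\ge L+1$, keeps only the $m=\ell$ term using $\hat R^{(\ell)}_{s_\ell}=\S_{s_\ell}$, and combines the reverse triangle inequality on $\mathcal A_x$ with the signal condition $\|\true_{s_\ell}\|_2>2\lambda w_\ell$. The only difference is that you spell out explicitly why the residual on $s_\ell$ is untouched and why $\hat\nu_\ell>0$ is equivalent to $h_\ell(0)>\lambda^2$, which the paper leaves to its algorithmic lemmas.
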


\begin{proof}
  See Appendix \ref{sec:bandwidth-recovery-proof}.
\end{proof}
In typical high-dimensional statistical problems, the support set can
generically be any subset and therefore one must require
that each element in the support set be sufficiently large on its
own to be detected.  By contrast, in our current context we know that the
support set is of the specific form $\{L+1,\ldots, p-1\}$, for some
unknown $L$.  Thus, as long as the signal is sufficiently large at $L+1$, one
might expect that the signal could be weaker at subsequent elements
of the support.  In the next theorem we demonstrate this phenomenon by
showing that when $\true_{s_{L+1}}$ exceeds the threshold given in
the previous theorem, it may ``share the wealth'' of this excess,
relaxing the requirement on the size of $\true_{s_{L+2}}$.
\begin{theorem}[``Share the wealth'']
Suppose (for some $\gamma>0$)
\begin{align*}
  \|\true_{s_{L+1}}\|_2&=(2+\gamma)\lambda w_{L+1}\\
\min_{\ell\ge L+3} &\|\true_{s_{\ell}}\|_2/w_{\ell} >2\lambda\\
\|\true_{s_{L+2}}\|_2&>
\begin{cases}
\lambda w_{L+2}(1+\sqrt{1-\gamma^2})  &\text{ for }0<\gamma<1\\
0 & \text{ for }\gamma\ge1,
\end{cases}
\end{align*}
 where $\lambda \geq x\sqrt{\log p/n}$ and $w_{\ell} = \sqrt{2\ell}$ and
$w_{\ell,\ell}\ge w_{\ell+1,\ell}>0$,
then
$
K\le\hat K
$
on the set $\mathcal A_x$.
\label{thm:recovery3}
\end{theorem}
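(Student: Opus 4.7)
The plan is to apply Corollary~\ref{cor:banded}, which reduces the conclusion $K \le \hat K$ to the assertion that $\hat\nu_\ell > 0$ for every $\ell \ge L+1$. Because each function $h_\ell(\nu)$ is strictly decreasing in $\nu$, this in turn amounts to showing $h_\ell(0) > \lambda^2$ at every such $\ell$. I will split the analysis into three regimes: the tail $\ell \ge L+3$ (handled just as in Theorem~\ref{thm:recovery2}), the base case $\ell = L+1$ (using the prescribed signal level $(2+\gamma)\lambda w_{L+1}$), and the genuinely new case $\ell = L+2$, where the ``share the wealth'' mechanism is activated.

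For $\ell \ge L+3$, I keep only the $m = \ell$ term of $h_\ell(0)$. Since the earlier dual variables $\hat A^{(\ell')}$ with $\ell' < \ell$ are supported on $g_{\ell'}$, which does not intersect $s_\ell$, the algorithm leaves $\hat R^{(\ell)}_{s_\ell} = \S_{s_\ell}$. On $\mathcal A_x$, $\|\S_{s_\ell} - \true_{s_\ell}\|_2 \le \sqrt{|s_\ell|}\,x\sqrt{\log p/n} = w_\ell\,x\sqrt{\log p/n} \le \lambda w_\ell$, so the hypothesis $\|\true_{s_\ell}\|_2/w_\ell > 2\lambda$ gives $\|\S_{s_\ell}\|_2/w_\ell > \lambda$, hence $h_\ell(0) > \lambda^2$. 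Exactly the same single-term argument applied to $\ell = L+1$ with the prescribed magnitude $(2+\gamma)\lambda w_{L+1}$ yields $\|\S_{s_{L+1}}\|_2 \ge (1+\gamma)\lambda w_{L+1}$, so $h_{L+1}(0) \ge (1+\gamma)^2\lambda^2 > \lambda^2$, and in particular $\hat\nu_{L+1} > 0$.

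The critical step is $\ell = L+2$. By the recursion~\eqref{recur}, $\hat R^{(L+2)}_{s_{L+1}} = \alpha\,\S_{s_{L+1}}$ with $\alpha = \hat\nu_{L+1}/(w_{L+1}^2 + \hat\nu_{L+1})$. To quantify $\alpha$, I retain only the $m = L+1$ term in the defining identity $h_{L+1}(\hat\nu_{L+1}) = \lambda^2$; this gives $w_{L+1}\|\S_{s_{L+1}}\|_2/(w_{L+1}^2 + \hat\nu_{L+1}) \le \lambda$, and combined with $\|\S_{s_{L+1}}\|_2 \ge (1+\gamma)\lambda w_{L+1}$ it yields $\hat\nu_{L+1} \ge \gamma w_{L+1}^2$, so $\alpha \ge \gamma/(1+\gamma)$. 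I then lower bound $h_{L+2}(0)$ by its $m = L+2$ and $m = L+1$ terms, using the weight hypothesis $w_{L+2,L+1} \le w_{L+1,L+1} = w_{L+1}$ on the latter:
\[
h_{L+2}(0) \;\ge\; \frac{\|\S_{s_{L+2}}\|_2^2}{w_{L+2}^2} \;+\; \frac{\alpha^2\,\|\S_{s_{L+1}}\|_2^2}{w_{L+1}^2} \;\ge\; \frac{\|\S_{s_{L+2}}\|_2^2}{w_{L+2}^2} \;+\; \gamma^2 \lambda^2.
\]
The condition on $\|\true_{s_{L+2}}\|_2$ is tuned so that $\|\S_{s_{L+2}}\|_2^2/w_{L+2}^2 > (1-\gamma^2)\lambda^2$ when $\gamma < 1$ (and is automatic when $\gamma \ge 1$, where the $\gamma^2 \lambda^2$ contribution alone suffices), delivering $h_{L+2}(0) > \lambda^2$ and hence $\hat\nu_{L+2} > 0$.

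The main obstacle is the bookkeeping in the $L+2$ regime: extracting the quantitative bound $\alpha \ge \gamma/(1+\gamma)$ from a one-term relaxation of the fixed-point identity for $\hat\nu_{L+1}$, and then using $w_{L+2,L+1} \le w_{L+1}$ to convert the surplus signal inherited on $s_{L+1}$ into exactly the surplus $\gamma^2\lambda^2$ needed in $h_{L+2}(0)$. The remaining regimes reduce to repetitions of the argument in Theorem~\ref{thm:recovery2}.
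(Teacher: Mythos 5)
Your proof is correct and follows essentially the same route as the paper's: verify $h_\ell(0)>\lambda^2$ for all $\ell\ge L+1$, with the $\ell=L+2$ case isolated and handled via the retained residual $\hat R^{(L+2)}_{s_{L+1}}=\alpha\,\S_{s_{L+1}}$ whose contribution to $h_{L+2}(0)$ supplies the $\gamma^2\lambda^2$ surplus after using $w_{L+2,L+1}\le w_{L+1}$. The only cosmetic difference is that you bound $\alpha$ and $\|\S_{s_{L+1}}\|_2$ separately (giving $\alpha\ge\gamma/(1+\gamma)$ and $\|\S_{s_{L+1}}\|_2\ge(1+\gamma)\lambda w_{L+1}$), whereas the paper computes the product $\alpha\,\|\S_{s_{L+1}}\|_2=\|\S_{s_{L+1}}\|_2-\lambda w_{L+1}$ exactly from the closed-form $\hat\nu_{L+1}=w_{L+1}(\|\S_{s_{L+1}}\|_2/\lambda-w_{L+1})$; both yield the identical bound $\alpha\,\|\S_{s_{L+1}}\|_2\ge\gamma\lambda w_{L+1}$ (and in fact your ``one-term relaxation'' of $h_{L+1}(\hat\nu_{L+1})=\lambda^2$ is exact here, since $\hat R^{(L+1)}_{s_m}=0$ for $m\le L$).
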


\begin{proof}
  See Appendix \ref{sec:bandwidth-recovery-proof}.
\end{proof}
\noindent When $\gamma=0$, Theorem \ref{thm:recovery3} reduces to Theorem
\ref{thm:recovery2}.  However, as $\gamma$ increases,
the required size of $\|\true_{s_{L+2}}\|_2$ decreases without preventing
the bandwidth from being misestimated.  In fact, for $\gamma\ge1$,
there is no requirement on $\true_{s_{L+2}}$ for bandwidth recovery.  This robustness to individual
subdiagonals being small is a direct result of our method being
hierarchical.
\begin{rem}
\textnormal{(i)  By Lemma \ref{lem:whp}, $\mathbb{P}(\mathcal{A}_{x}) > 1 - cp^{-1}$, under Assumption 1 and 2. Thus, all the results 
of this section hold with this probability. \\
(ii)  Theorems  \ref{thm:recovery1}  and   \ref{thm:recovery2}  apply to
  all three weighting schemes considered in Section \ref{sec:weights}.  Theorem \ref{thm:recovery3}'s additional requirement of
  positivity excludes the ``Group lasso'' weights \eqref{eq:groupw}, which is non-hierarchical
  and therefore is unable to ``share the wealth.''  }
\end{rem}

 \subsection{Minimax adaptive estimation}
\label{sec:minimax}

\subsubsection{Frobenius norm rate optimality}
\label{sec:F-norm}
In this section we show that the estimator $\hatP$, 
with weights given by either \eqref{eq:groupw} or  \eqref{eq:genw}, is minimax
adaptive,  up to multiplicative logarithmic factors,   with respect to the Frobenius norm, over a class of
population covariance matrices that generalizes both the class of
$K$-banded matrices and previously studied classes of approximately banded matrices. We begin by stating Theorem \ref{thm:oracle}, which is a general oracle inequality from which  adaptivity to the optimal minimax rate will follow as immediate corollaries. \\

\noindent  For any $B\in\mathbb{R}^{p\times p}$, 
let $L(B)$ be such that $B_{g_{L(B)}} = 0$ and
$B_{s_{L(B)+1}}\neq 0$, that is $B$ has bandwidth $K(B) =p-1-L(B)$.
  Let $\mathcal{S}_p$ be
the class of all $p\times p$ positive definite covariance matrices. 
In Theorem \ref{thm:deter} of  Appendix \ref{sec:Fnorm-deter}, we provide a deterministic upper
bound on  $\| \hatP -\true \|_F^2$ that indicates what terms need to
be controlled in order to obtain optimal stochastic performance of our
estimator.  The following theorem, which is the main result of this
section, is a consequence of  this theorem, and shows that for appropriate choices of the weights and tuning parameter, 
the proposed estimator achieves the best bias-variance trade-off both in probability and in expectation, up to small additive terms, which are the price paid for adaptivity. 

\begin{theorem}\label{thm:oracle}
Suppose $\true\in \mathcal{S}_p$ and $\max_{i, j} |\true_{ij}| \leq
M$ for some constant $M$. If the weights are given by either
\eqref{eq:groupw} or \eqref{eq:genw} and $\lambda = x\sqrt{\log p/n}$,
then on the set $\mathcal A_x$ the convex banding estimator of
 \eqref{eq:primal} satisfies
\begin{equation*}
\| \hatP -\true \|_F^2  \leq \inf_{B\in\mathbb{R}^{p\times p}} \left\{\|\true - B\|_F^2 +16x^2  \frac{K(B)p\log p}{n}\right\} + x^2\frac{p\log p}{n}.
\end{equation*}
Moreover, for $x$
sufficiently large
, 
\begin{equation*}
\mathbb{E}\| \hatP -\true \|_F^2 \lesssim \inf_{B\in\mathbb{R}^{p\times p}} \left\{ \|\true-B\|_F^2 + \frac{K(B)p\log p}{n} \right\} +\frac{p}{n},
\end{equation*}
where the symbol $\lesssim$ is used to denote an inequality that holds up to positive multiplicative constants that are independent of $n$ or $p$. 
\end{theorem}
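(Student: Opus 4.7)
My plan is to derive the statement as a consequence of the deterministic oracle bound of Theorem \ref{thm:deter} in Appendix \ref{sec:Fnorm-deter}. That theorem's right-hand side involves $\max_{ij}|\S_{ij}-\true_{ij}|$, which on the event $\mathcal A_x$ is at most $\lambda=x\sqrt{\log p/n}$; substituting this bound gives the in-probability inequality directly. For the expectation bound, I would split $\mathbb E\|\hatP-\true\|_F^{2}$ over $\mathcal A_x$ and $\mathcal A_x^{c}$. On $\mathcal A_x$ the probability bound already applies. On $\mathcal A_x^{c}$, combining Lemma \ref{lem:whp} (which yields $\mathbb P(\mathcal A_x^{c})\le c/p$ for $x$ sufficiently large) with Cauchy--Schwarz and a crude polynomial-in-$p$ fourth-moment bound on $\|\hatP-\true\|_F$ (available from Assumption 1 and the shrinkage inequality $\|\hatP\|_F\le\|\S\|_F$ obtained by comparing the penalized objective at $\hatP$ to its value at $\Sigma=0$) makes the $\mathcal A_x^{c}$ contribution negligible, hence absorbed by the $p/n$ term.

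To establish the underlying deterministic oracle bound, I would begin from the sharpened basic inequality. Since $\half\|\Sigma-\S\|_F^{2}$ is $1$-strongly convex and $\hatP$ is the minimizer of the penalized objective, for every $B\in\real^{p\times p}$,
\begin{equation*}
\half\|\hatP-\true\|_F^{2}+\half\|\hatP-B\|_F^{2}\le \half\|B-\true\|_F^{2}+\langle \hatP-B,\S-\true\rangle+\lambda\bigl(\|B\|_{2,1}^{*}-\|\hatP\|_{2,1}^{*}\bigr).
\end{equation*}
Write $\Delta=\hatP-B$. Since $B$ is assumed to have bandwidth $K(B)$, one has $B_{s_\ell}=0$ for $\ell\le L(B)$, so for the group-lasso weights \eqref{eq:groupw} the reverse triangle inequality applied group-by-group gives $\|B\|_{2,1}^{*}-\|\hatP\|_{2,1}^{*}\le \sum_{\ell>L(B)}\sqrt{2\ell}\,\|\Delta_{s_\ell}\|_2-\sum_{\ell\le L(B)}\sqrt{2\ell}\,\|\Delta_{s_\ell}\|_2$. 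On $\mathcal A_x$, Cauchy--Schwarz within each subdiagonal yields $|\langle \Delta_{s_\ell},(\S-\true)_{s_\ell}\rangle|\le \sqrt{2\ell}\,\lambda\,\|\Delta_{s_\ell}\|_2$; summing and adding the two bounds causes the $\ell\le L(B)$ contributions to cancel, producing the cone-type inequality
\begin{equation*}
\langle \Delta,\S-\true\rangle+\lambda\bigl(\|B\|_{2,1}^{*}-\|\hatP\|_{2,1}^{*}\bigr)\le 2\lambda\sum_{\ell>L(B)}\sqrt{2\ell}\,\|\Delta_{s_\ell}\|_2.
\end{equation*}

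A further Cauchy--Schwarz bounds the right-hand side by $2\lambda\sqrt{2pK(B)}\,\|\Delta\|_F$ (using $\sum_{\ell>L(B)}2\ell\le 2pK(B)$), and the AM-GM split $2ab\le 4a^{2}+b^{2}/4$ combined with the $\half\|\Delta\|_F^{2}$ on the left yields the off-diagonal bound $\|\hatP-\true\|_F^{2}\le \|B-\true\|_F^{2}+16\,K(B)\,p\,\lambda^{2}$. The penalty never touches the diagonal, so $\hatP_{s_p}=\S_{s_p}$ always, and the diagonal contributes an additional $\|\S_{s_p}-\true_{s_p}\|_2^{2}\le p\lambda^{2}$ on $\mathcal A_x$, giving the extra $x^{2}p\log p/n$ term in the theorem. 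The main obstacle is extending this cone argument to the hierarchical weights \eqref{eq:genw}: each subdiagonal $s_m$ enters every group $g_\ell$ with $\ell\ge m$, so the penalty difference no longer decomposes cleanly group-by-group, and one must exploit both the calibration $w_{\ell\ell}=\sqrt{2\ell}$ and the decay $w_{\ell m}=\sqrt{2\ell}/(\ell-m+1)$ in $m$ to recover a cone inequality with a comparable constant---verifying that the effective ``signal'' indices remain $\{L(B)+1,\ldots,p-1\}$ is the most delicate piece and is exactly what forces the specific form of \eqref{eq:genw}.
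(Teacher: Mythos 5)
Your plan for the in-probability bound (route it through Theorem \ref{thm:deter}, then use $\mathcal A_x$ and $\lambda=x\sqrt{\log p/n}$) is exactly the paper's route, and your direct strong-convexity/cone derivation of the deterministic bound is correct as far as it goes for the non-hierarchical weights \eqref{eq:groupw}: it even recovers the constant $16$ via $\sum_{\ell>L(B)}2\ell\le 2pK(B)$ and the separate diagonal term $\|\S_{s_p}-\true_{s_p}\|_2^2\le p\lambda^2$. But the theorem also asserts the bound for the hierarchical weights \eqref{eq:genw}, and this is precisely the part you leave as an acknowledged ``obstacle.'' That is a genuine gap, not a detail: for \eqref{eq:genw} the penalty difference $\|B\|_{2,1}^*-\|\hatP\|_{2,1}^*$ does not split subdiagonal-by-subdiagonal, and the paper resolves it with a different mechanism --- monotonicity of the subgradient of each $f_\ell(\Sigma)=\|(W^{(\ell)}*\Sigma)_{g_\ell}\|_2$ combined with the primal--dual relation \eqref{eq:primal-dual}, a judicious choice of subgradient elements ($A^{(\ell)}_{s_m}\propto w_{\ell m}\hatP_{s_m}$ for $\ell\le L(B)$, and $A^{(\ell)}_{g_{L(B)}}=0$ for $\ell\ge L(B)+1$), the norm-decomposition Lemma \ref{lem2}, and finally the ``net weight'' quantity $w_0(L(B))=\max_m\sum_{\ell\ge m}w_{\ell m}^2\lesssim p$, which is exactly where the decay $w_{\ell m}=\sqrt{2\ell}/(\ell-m+1)$ enters. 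Without this (or an equivalent argument), the stated theorem is only proved for \eqref{eq:groupw}.

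The expectation bound also has a gap as written. With only $\mathbb P(\mathcal A_x^c)\le c/p$ from Lemma \ref{lem:whp}, Cauchy--Schwarz against a polynomial fourth-moment bound gives an $\mathcal A_x^c$ contribution of order $(\mathbb E\|\hatP-\true\|_F^4)^{1/2}(c/p)^{1/2}\asymp p^{2}\cdot p^{-1/2}=p^{3/2}$, which is not $\lesssim p/n$ (Assumption 2 allows $p$ much smaller than $n$, so $p/n$ can be far below a constant, let alone $p^{3/2}$). Note also that the lemma's stated bound $c/p$ does not improve as $x$ grows; to salvage your route you would have to go back to the tail $c_1(p\vee n)^{2-c_2x^2}+\cdots$ inside the proof of Theorem \ref{thm:whp} and take $x$ large depending on $\gamma_0$. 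The paper avoids this entirely: it keeps the residual term $R_2\lesssim p\,(D-\lambda)\,1_{\{D>\lambda\}}$ from Theorem \ref{thm:deter} (with $D=\max_{ij}|\S_{ij}-\true_{ij}|$) and applies the truncated moment bound $\mathbb E\,D^2 1_{\{D>x\sqrt{\log(p\vee n)/n}\}}\le c/(n(n\vee p))$ of Theorem \ref{thm:whp}, so that $\mathbb E R_2^2\lesssim p/n$, together with $\mathbb E\|\S_{s_p}-\true_{s_p}\|_2^2\lesssim p/n$. You should either adopt that truncated-moment argument or explicitly strengthen the tail bound; as proposed, the $p/n$ additive term is not justified.
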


\noindent  For  any given $K \geq 1$,  the class of
{\em $K$-banded} matrices is
\begin{align*}
\mathcal{B}(K) = \mathcal{B}(K, M) =: \left\{ \Sigma\in \mathcal{S}_p: \Sigma_{g_{p-K-1}}=0,\,  \Sigma_{s_{p-K}}\neq0  \mbox{ and } \max_{ij}|\Sigma_{ij}| \leq M  \right\}.
\end{align*} 
Motivated by Theorem \ref{thm:oracle} above, we define the following  general class of covariance matrices, henceforth referred to as {\em $K$-semi-banded}:
\begin{align}\label{eq:genbanded}
\mathcal{G}(K)&= \mathcal{G}(K, M)\\
&:= \left\{ \Sigma \in \mathcal{S}_p: \max_{ij}|\Sigma_{ij}| \leq M  \mbox{ and }  \|\Sigma - B\|_{F}^{2} \lesssim 
pK\log p/n \mbox{ for some } B\in\mathcal{B}(K, M) \right\},\nonumber
\end{align}
for any $K \geq 1$.   Trivially, 
\[ \mathcal{B}(K) \subset \mathcal{G}(K), \ \mbox{for any} \ K \geq 1.\]
Notice that a  semi-banded covariance matrix  does not have any
explicit restrictions on the order of magnitude of $\|\Sigma\|_{op}$;
nor does it require exact zero entries.\\

Theorem \ref{thm:Flower}  below  gives the minimax lower bound, in
probability, for estimating a covariance matrix $\true \in
\mathcal{B}(K)$, in Frobenius norm, from a sample $\mathbf X_1,  \ldots, \mathbf X_n$.  Let $\mathbb{P}_{K}$ be the class of probability distributions of $p$-dimensional vectors satisfying Assumption 1 and with covariance matrix in $\mathcal{B}(K)$. 
\begin{theorem}\label{thm:Flower}
If $K < \sqrt{n}$, there exist absolute constants $c > 0$ and $\beta \in (0, 1)$ such that 
\begin{equation}
\inf_{\hat{\Sigma}}\sup_{\mathbb{P}_{K}}P\left( \| \hat \Sigma  -\true \|_F^2 > cp K/n\right) \geq \beta.
\end{equation}
\end{theorem}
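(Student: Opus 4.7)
The plan is to establish the lower bound by Fano's method applied to a hypercube of $K$-banded Gaussian covariance matrices constructed as small perturbations of $I_p$. The heuristic is that the Frobenius separation per free coordinate scales as $\tau^2$, while the Kullback--Leibler divergence between adjacent Gaussian laws grows as $n\tau^2$; with roughly $pK$ free coordinates to randomize, the choice $\tau^2 \asymp 1/n$ balances signal against noise and yields the target rate $pK/n$.

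The first step is to construct the hypercube. Let $S = \{(i,j): 1 \le i < j \le p,\ j - i \le K\}$, so $|S| \asymp pK$. I would fix one position $(i^*, j^*) \in S$ with $j^* - i^* = K$ and assign it a small nonzero constant $\tau_0$, so that $\Sigma_{s_{p-K}} \ne 0$ and membership in $\mathcal{B}(K)$ is certified. For each $\theta \in \{0,1\}^{|S|-1}$, set
$$
\Sigma_\theta = I_p + \tau_0 (E_{i^*j^*} + E_{j^*i^*}) + \tau \sum_{(i,j) \in S \setminus \{(i^*,j^*)\}} \theta_{ij}(E_{ij} + E_{ji}),
$$
where $E_{ij}$ is the elementary matrix with a single $1$ at $(i,j)$ and $\tau > 0$ is a tuning parameter. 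Gershgorin's disc theorem gives $\Sigma_\theta \succeq (1 - 2K\tau - 2\tau_0) I_p$, so positive definiteness holds as soon as $\tau K \lesssim 1$, and since entries are uniformly bounded by $1$, the hypercube lies inside $\mathcal{B}(K, M)$ for any $M \ge 1$.

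Next I would assemble the two ingredients for Fano. The Frobenius separation is exact: $\|\Sigma_\theta - \Sigma_{\theta'}\|_F^2 = 2\tau^2 H(\theta, \theta')$, where $H$ is the Hamming distance. For the Gaussian product law $P_\theta = N(0, \Sigma_\theta)^{\otimes n}$, using $\|\Sigma_\theta^{-1}\|_{op} \le 2$ together with a second-order expansion of $\log\det(\Sigma_{\theta'}^{-1}\Sigma_\theta)$, I would derive $\mathrm{KL}(P_\theta, P_{\theta'}) \lesssim n\tau^2 H(\theta, \theta')$. The Varshamov--Gilbert lemma then extracts a subset $\Theta_0 \subseteq \{0,1\}^{|S|-1}$ of cardinality at least $2^{N/8}$ and pairwise Hamming distance at least $N/8$, where $N = |S| - 1 \asymp pK$. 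Choosing $\tau^2 = c_0/n$ for a small absolute constant $c_0$, the packing has pairwise Frobenius gap $\gtrsim pK/n$ while the KL radius is $\lesssim c_0 N$, a small fraction of $\log |\Theta_0| \gtrsim N$ for $c_0$ small enough. A standard form of Fano's inequality then delivers
$$
\inf_{\hat\Sigma}\, \sup_{\theta \in \Theta_0}\, P_\theta(\|\hat\Sigma - \Sigma_\theta\|_F^2 \ge cpK/n) \ge \beta
$$
for absolute constants $c, \beta > 0$, which implies the claim since each $N(0, \Sigma_\theta)$ with $\theta \in \Theta_0$ is a Gaussian law satisfying Assumption~1 and has covariance in $\mathcal{B}(K)$, hence belongs to $\mathbb{P}_K$.

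The main obstacle is reconciling positive definiteness of the entire hypercube with a perturbation amplitude large enough to hit the target rate. Gershgorin dictates $\tau K \lesssim 1$ whereas the Fano balance demands $\tau^2 \asymp 1/n$; jointly these give exactly the hypothesis $K < \sqrt n$ of the statement. A secondary subtlety is the strict $\Sigma_{s_{p-K}} \ne 0$ clause in the definition of $\mathcal{B}(K)$, handled cleanly by freezing one entry on that subdiagonal at the value $\tau_0$ and randomizing only the remaining $|S| - 1 \asymp pK$ coordinates inside the band.
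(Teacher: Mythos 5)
Your proposal is correct and follows essentially the same route as the paper: a hypercube of banded perturbations of $I_p$ at amplitude $\asymp n^{-1/2}$, Varshamov--Gilbert to extract a well-separated packing of size $2^{cpK}$, a Kullback--Leibler bound of order $\alpha^2 pK$, and Fano/Tsybakov to conclude, with $K<\sqrt{n}$ ensuring positive definiteness. Your extra step of freezing one entry on the $K$th subdiagonal to enforce $\Sigma_{s_{p-K}}\neq 0$ is a minor refinement (the paper only notes bandwidth at most $K$), and bounding pairwise KL rather than KL to the center $\mathbb{P}_{I_p}$ is an equivalent variant of the same argument.
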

To the best of our knowledge, the  minimax lower bound  for Frobenius norm estimation over  $\mathcal{B}(K)$ is  new.  The  proof of the existing related result in  \cite{Cai10}, established for approximately banded matrices, or that in \citet{Cai12breg}, for {\it approximately sparse}  matrices, could be adapted to this situation, but the resulting proof would be involved, as it would  require re-doing all their calculations. We provide in Appendix \ref{sec:proof-theor-Flower}  a different and much shorter proof, tailored to our class of  covariance matrices.

From Theorems \ref{thm:oracle}  and \ref{thm:Flower} above we conclude
that convex banding with either the (non-hierarchical) group lasso penalty, \eqref{eq:groupw}, or the
general hierarchical penalty, \eqref{eq:genw}, achieves, adaptively,
the minimax rate, up to log terms, not only over $\mathcal{B}(K)$, but over the larger class $\mathcal{G}(K)$ of semi-banded matrices.  We summarize this below. 

\begin{cor}\label{cor:Fnorm}
 Suppose $\true\in\mathcal G(K)$.  The convex banding estimator of
 \eqref{eq:primal}, with weights given by either \eqref{eq:groupw} or
 \eqref{eq:genw}, and with $\lambda = x\sqrt{\log p/n}$ satisfies 
 \begin{equation*}
\| \hatP -\true \|_F^2  \lesssim pK\log p/n
 \end{equation*}
on the set $\mathcal A_x$
and also
$$ \E\| \hatP -\true \|_F^2 \lesssim pK\log p/n.$$
\end{cor}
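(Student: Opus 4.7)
The plan is to derive Corollary \ref{cor:Fnorm} as an immediate consequence of the oracle inequality in Theorem \ref{thm:oracle} by choosing a specific $B$ inside the infimum, namely the banded matrix guaranteed by the definition of the semi-banded class $\mathcal{G}(K)$.

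First I would unpack the definition of $\mathcal{G}(K)$: since $\true\in\mathcal{G}(K)$, there exists a matrix $B\in\mathcal{B}(K,M)$ with $\|\true-B\|_F^2 \lesssim pK\log p/n$. By definition of $\mathcal{B}(K,M)$, this $B$ has bandwidth exactly $K$, so $K(B)=K$, or equivalently $L(B)=p-K-1$. This is the candidate to plug into the infimum.

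Next, on the event $\mathcal{A}_x$, Theorem \ref{thm:oracle} yields, for this particular choice of $B$,
\begin{equation*}
\|\hatP-\true\|_F^2 \leq \|\true-B\|_F^2 + 16x^2\,\frac{K(B)\,p\log p}{n} + x^2\,\frac{p\log p}{n}.
\end{equation*}
The first summand is $\lesssim pK\log p/n$ by the semi-banded assumption; the second equals $16x^2 pK\log p/n$; and the third is $\lesssim pK\log p/n$ whenever $K\geq 1$. Adding the three gives the claimed high-probability bound $\|\hatP-\true\|_F^2\lesssim pK\log p/n$ on $\mathcal{A}_x$.

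For the bound in expectation, I would apply the second display of Theorem \ref{thm:oracle} with exactly the same choice of $B$. The infimum is dominated by $\|\true-B\|_F^2 + K p\log p/n \lesssim pK\log p/n$, and the additive $p/n$ remainder is absorbed into $pK\log p/n$ since $K\geq 1$. This yields $\mathbb{E}\|\hatP-\true\|_F^2 \lesssim pK\log p/n$, completing the proof. There is no real obstacle here, since the oracle inequality has already done the heavy lifting; the one subtlety worth stating carefully is that the approximating $B$ supplied by the definition of $\mathcal{G}(K)$ is itself $K$-banded, so that $K(B)$ in the oracle inequality coincides with the class parameter $K$.
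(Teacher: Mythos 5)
Your proof is correct and follows exactly the route the paper intends: the corollary is stated as an immediate consequence of Theorem \ref{thm:oracle}, obtained by plugging into the infimum the $K$-banded matrix $B$ supplied by the definition of $\mathcal{G}(K)$, so that $K(B)=K$ and all three terms are of order $pK\log p/n$ (using $K\geq 1$ to absorb the additive remainders), both on $\mathcal{A}_x$ and in expectation.
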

\noindent  To the best of our knowledge, this is the first estimator which,  in particular, 
has been proven to be minimax adaptive over the class of exactly banded matrices, up to logarithmic factors.

It should be emphasized that, despite certain similarities, the problem we are studying is different
from that of estimating the autocovariance matrix of a stationary
time series based on the observation of a single realization of the series.  For a
discussion of this distinction and a treatment of this latter problem,
see \citet{Xiao12}.



\begin{rem}
\textnormal{An immediate consequence of Theorem \ref{thm:deter} given in Appendix
\ref{sec:Fnorm-proof} is that for the basic hierarchical penalty, \eqref{eq:basicw}, we have,  on the set $\mathcal{A}_x$,
\begin{equation*}
\frac{1}{p}\| \hatP -\true \|_F^2  \lesssim K^2\log p/n,
 \end{equation*}
for $\true\in\mathcal B(K)$, which is a sub-optimal rate. Therefore,
if Frobenius norm rate optimality is of interest and one desires a
banded estimator, then the simple weights \eqref{eq:basicw} do not suffice, and one must resort to the more general ones given by  \eqref{eq:genw}.}
\end{rem}

\noindent The class of  semi-banded matrices is more general
than the previously studied class of approximately banded matrices
considered in \citet{Cai10}:
\begin{equation}
\begin{split}
\label{C}
 \mathcal{C}_{\alpha} = \Big \{\Sigma\in\mathcal{S}_p: \,\, & |\Sigma_{ij}| \leq M_1|i-j|^{-(\alpha+1)} \mbox{ for all }  i\neq j
   \text{ and }  \lambda_{\max}(\Sigma) \leq M_0\Big\}.
 \end{split}
\end{equation}
Note that \citet{Bickel08band} and \citet{Cai12} consider a closely related class,
\begin{equation}
\label{D}
\begin{split}
\mathcal{D}_{\alpha} = \Big\{\Sigma\in\mathcal{S}_p:\,\,& \max_j \sum_{i:\,\, |i-j|>k} |\Sigma_{ij}| \leq M_1 |i-j|^{-\alpha}, \text{ for all }  k>0,\\
&\text{ and } 0 < M_0^{-1}\leq \lambda_{\min}(\Sigma) \leq \lambda_{\max}(\Sigma) \leq M_0 \Big\}.
\end{split}
\end{equation}

Interestingly, the class of approximately banded matrices $\mathcal{C}_{\alpha}$  does not include, in general, the class of exactly banded matrices $\mathcal{B}(K)$, as 
membership in these classes forces neighboring entries to decay
rapidly, a restriction that is not imposed on elements of
$\mathcal{B}(K)$.  Moreover,  the largest eigenvalue of a $K$-banded matrix can be of order $K$, and hence cannot be  bounded by a constant when $K$ is allowed to grow with $n$ and $p$. Therefore, the class 
of approximately banded matrices is different from, but not
necessarily more general than, $\mathcal{B}(K)$, for arbitrary values
of $K$.  



In contrast, the class $\mathcal{G}(K)$ introduced 
in \eqref{eq:genbanded} above is more general than $\mathcal{B}(K)$, for all $K$, and also contains $\mathcal{C}_{\alpha}$ for an appropriate value of $K$. Let 
$K_\alpha\lesssim \left(\frac{n}{\log
      p}\right)^{\frac{1}{2\alpha + 2}}$.
Then, it is immediate to see that, for any $\alpha > 0$,  
\[ \mathcal{C}_{\alpha} \subset   \mathcal{G}(K_{\alpha}).\] 
To gain further insight into particular types of matrices in  $
\mathcal{G}(K_{\alpha})$, notice that it contains, for instance,
matrices in which neighboring elements do not have to start decaying
immediately, but only when they are far enough from the main diagonal. Specifically, for some positive constants $M_1 < M_0$ and any $\alpha > 0$, let
\label{abanded}
\begin{equation}
\begin{split}
\mathcal{G}_{\alpha} =  \Big\{\Sigma\in \mathcal{S}_p: &\,\,  |\Sigma_{ij}| \leq M_0, \ \mbox{if} \  |i - j| \leq K_{\alpha} \text{ and }\\
&\,\,  |\Sigma_{ij}| \leq M_1|i-j-K_{\alpha}|^{-(\alpha+1)},  \text{ if  } |i - j| > K_{\alpha} \Big\}.
\end{split}
\end{equation}
Then we  have, for any fixed $\alpha > 0$ and appropriately adjusted constants, that
\[ \mathcal{C}_{\alpha} \subset \mathcal{G}_{\alpha} \subset  \mathcal{G}(K_{\alpha}) \  \ \ \ \  \mbox{and} \ \ \ \ \  \mathcal{B}(K_{\alpha}) \subset  \mathcal{G}_{\alpha} \subset   \mathcal{G}(2K_{\alpha}). \]


\noindent The following corollary gives the rate of estimation over $\mathcal{G}(K_{\alpha})$. 
\begin{cor}\label{semib}
Suppose $\true \in \mathcal{G}(K_{\alpha})$. The convex banding estimator of
 \eqref{eq:primal},  with weights given by either \eqref{eq:groupw} or
 \eqref{eq:genw}, and with $\lambda = x\sqrt{\log p/n}$, where $x$ is
 as in Theorem \ref{thm:oracle}, satisfies 
 \begin{align*}
(i)& \ \  \frac1{p}\| \hatP -\true \|_F^2 \lesssim\left( \frac{\log
    p}{n}\right)^{\frac{2\alpha + 1}{2\alpha + 2}} \ \mbox{on the set
} \mathcal{A}_x \\
 \text{ and } (ii)& \ \ \mathbb{E}\| \hatP -\true \|_F^2 \lesssim  p \left( \frac{\log p}{n}\right)^{\frac{2\alpha + 1}{2\alpha + 2}}.
 \end{align*}
\end{cor}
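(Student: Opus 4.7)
My plan is to prove Corollary \ref{semib} as a direct consequence of Corollary \ref{cor:Fnorm}, which already establishes the rate $pK\log p/n$ uniformly over $\mathcal G(K)$. Since $\true \in \mathcal{G}(K_\alpha)$ by assumption, I can plug $K = K_\alpha$ directly into the bounds of Corollary \ref{cor:Fnorm}. Specifically, on the set $\mathcal{A}_x$, I get
\begin{equation*}
\|\hatP - \true\|_F^2 \lesssim \frac{pK_\alpha \log p}{n},
\end{equation*}
and the same bound (with adjusted constants) holds in expectation.

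The only real computation is to substitute the rate $K_\alpha \lesssim (n/\log p)^{1/(2\alpha+2)}$ and simplify. Writing
\begin{equation*}
\frac{K_\alpha \log p}{n} \lesssim \frac{(n/\log p)^{1/(2\alpha+2)} \log p}{n} = \left(\frac{\log p}{n}\right)^{1 - \frac{1}{2\alpha+2}} = \left(\frac{\log p}{n}\right)^{\frac{2\alpha+1}{2\alpha+2}},
\end{equation*}
which, upon dividing by $p$ (for part (i)) or keeping the factor $p$ (for part (ii)), yields the stated rates.

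The only thing to verify is that one is allowed to apply Corollary \ref{cor:Fnorm} uniformly with $K = K_\alpha$; this requires noting that $K_\alpha$ is a deterministic quantity depending only on $n$, $p$, $\alpha$ (not on the sample), and that $\true$ lies in $\mathcal{G}(K_\alpha)$ by hypothesis, so the inf-type oracle inequality of Theorem \ref{thm:oracle} can be specialized to any fixed $B \in \mathcal{B}(K_\alpha, M)$ certifying membership of $\true$ in $\mathcal{G}(K_\alpha)$. Combining $\|\true - B\|_F^2 \lesssim pK_\alpha \log p/n$ (from the definition of $\mathcal{G}(K_\alpha)$) with the variance term $K_\alpha p \log p/n$ gives the same order.

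There is no substantive obstacle here — the corollary is essentially an arithmetic simplification of the uniform bound already proved. The real work has been absorbed into Theorem \ref{thm:oracle} and Corollary \ref{cor:Fnorm}. The only step that deserves a sentence of explanation is the choice of $K_\alpha$: it is the value of the bandwidth at which the bias contribution $\|\true - B\|_F^2$ and the variance contribution $K p \log p/n$ balance for matrices in $\mathcal{C}_\alpha$, which is why the conclusion recovers (up to logarithmic factors) the classical minimax rate $(\log p/n)^{(2\alpha+1)/(2\alpha+2)}$ previously established in \citet{Cai10} over $\mathcal{C}_\alpha$, and now extended to the strictly larger class $\mathcal{G}(K_\alpha)$.
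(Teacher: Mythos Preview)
Your proposal is correct and matches the paper's intended argument: Corollary~\ref{semib} is stated without proof because it follows immediately from Corollary~\ref{cor:Fnorm} by substituting $K=K_\alpha\lesssim(n/\log p)^{1/(2\alpha+2)}$ and simplifying the exponent, exactly as you do. Your additional remark tracing the result back to Theorem~\ref{thm:oracle} and the definition of $\mathcal G(K_\alpha)$ is accurate but not needed, since Corollary~\ref{cor:Fnorm} already packages that step.
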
 
 The upper bound in (ii), up to logarithmic factors,  is the minimax lower bound derived in \citet{Cai10} over $\mathcal{C}_{\alpha}$ and $\mathcal{D}_{\alpha}$. Therefore, Corollary \ref{semib} shows that 
the minimax rate, with respect to the Frobenius norm, can be achieved, {\it adaptively} over $\alpha >0$, not only over the classes of approximately banded matrices, but also over the larger class of semi-banded matrices  $\mathcal{G}(K_{\alpha})$. In contrast, the two existing estimators  with established minimax rates of convergence in Frobenius norm require 
knowledge of $\alpha$ prior to estimation. This is the case for the
banding estimator of \citet{Bickel08band},  
and the tapering estimator of \citet{Cai10}, over the class  $\mathcal{D}_{\alpha}$. 
 Adaptivity to the minimax rate,  over $\mathcal{D}_\alpha$,  with respect to the Frobenius norm,  is alluded to in Section 5 of \citet{Cai12}, who proposed a block-thresholding estimator, but a  formal statement and proof are left open.


\subsubsection{Operator norm rate optimality}
\label{sec:op-norm}
In this section we study the operator norm optimality of our estimator over classes of matrices that are close, in operator norm, to banded matrices, with 
bandwidths that might increase with $n$ or $p$. 
\[
\mathcal{G}_{op}(K)  =\left \{\Sigma\in \mathcal{S}_p: \,\, \text{there exists} \ B\in \mathcal{B}(K) \mbox{ such that }
\|\Sigma - B\|_{op}\lesssim K\sqrt{\log p/n} \right\}.
\]

The following theorem gives the minimax lower bound, in probability, for estimating a covariance matrix $\true \in \mathcal{G}_{op}(K)$, in operator norm, from a sample $\mathbf X_1,  \ldots, \mathbf X_n$.  Let $\mathbb{P}_{K}$ be the class of probability distributions of $p$-dimensional vectors satisfying Assumption 1 and with covariance matrix in $\mathcal{B}(K)$.

\begin{theorem}\label{Olower}
If $K < \sqrt{n}$, there exist absolute constants $c > 0$ and $\beta \in (0, 1)$ such that 
\begin{equation}
\inf_{\hat{\Sigma}}\sup_{\mathbb{P}_{K}}P\left( \| \hat \Sigma  -\true \|_{op} > cK/\sqrt{n}\right) \geq \beta.
\end{equation}
\end{theorem}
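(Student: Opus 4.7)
The plan is to follow the classical Fano-method route used for covariance lower bounds. Since $\mathcal{B}(K)\subseteq\mathcal{G}_{op}(K)$ and $\mathbb{P}_K$ contains the zero-mean Gaussians with covariance in $\mathcal{B}(K)$, it is enough to construct a finite family of such Gaussians whose laws are hard to tell apart based on $n$ samples yet whose covariances are well-separated in operator norm.

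First, I would build the hypotheses. Let $M=\lfloor p/(2K)\rfloor$ and partition a subset of $\{1,\dots,p\}$ into $M$ disjoint windows $W_1,\dots,W_M$ of $K$ consecutive indices. Put $v_m=\mathbf 1_{W_m}\in\real^p$, so the $v_m$ are pairwise orthogonal with $\|v_m\|_2^2=K$, and each rank-one matrix $v_m v_m^{\top}$ is $K$-banded. For a scalar $\tau>0$ to be calibrated and each $\gamma\in\{0,1\}^M$ let
\[
\Sigma_\gamma \;=\; I_p+\tau\sum_{m=1}^M \gamma_m\, v_m v_m^{\top}.
\]
Each $\Sigma_\gamma \succeq I_p$ is positive definite, $K$-banded, and of sup-norm at most $1+\tau$, so it lies in $\mathcal{B}(K,M)$ once $\tau$ is small. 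Because the rank-one blocks are supported on disjoint $K\times K$ index sets, the difference $\Sigma_\gamma-\Sigma_{\gamma'}$ is block diagonal and $\|\Sigma_\gamma-\Sigma_{\gamma'}\|_{op}=\tau K$ whenever $\gamma\neq\gamma'$.

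Next, I would control the information. Using Sherman--Morrison together with the block-diagonal structure,
\[
\mathrm{KL}\!\left(N(0,\Sigma_{\gamma'})^{\otimes n}\,\big\|\,N(0,\Sigma_\gamma)^{\otimes n}\right) \;=\; \tfrac{n\,H(\gamma,\gamma')}{2}\!\left[\tau K-\log(1+\tau K)\right] \;\lesssim\; n\tau^2 K^2\,H(\gamma,\gamma'),
\]
valid once $\tau K\le 1$, which is ensured by $K<\sqrt n$ and the forthcoming choice of $\tau$. The Varshamov--Gilbert lemma yields a subset $\Gamma'\subseteq\{0,1\}^M$ with $|\Gamma'|\ge 2^{M/8}$ and pairwise Hamming distance at least $M/8$, so the largest pairwise KL over $\Gamma'$ is at most $Cn\tau^2K^2M$. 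Fano's inequality then gives, for every estimator $\hat\Sigma$,
\[
\max_{\gamma\in\Gamma'}\mathbb P_\gamma\!\left(\|\hat\Sigma-\Sigma_\gamma\|_{op}>\tau K/2\right)\;\ge\; 1-\frac{Cn\tau^2K^2M+\log 2}{(M/8)\log 2}.
\]
Choosing $\tau$ so that the operator-norm separation $\tau K$ is of the target order $K/\sqrt n$ while keeping the Fano ratio bounded away from one (with $M\asymp p/K$) delivers the desired conclusion with some $\beta\in(0,1)$.

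The main obstacle is the simultaneous calibration of $\tau$, the window count $M$, and the Varshamov--Gilbert cardinality so as to extract the targeted separation $cK/\sqrt n$. A pure two-hypothesis construction with a single rank-one block yields only separation of order $1/\sqrt n$, since the Frobenius and operator norms of $v v^{\top}$ coincide and the KL budget and operator-norm separation trade off one-for-one. The extra factor of $K$ in the target rate must therefore be harvested from the log-cardinality of the Varshamov--Gilbert packing together with the $p/K$ disjoint blocks available. Verifying that the packing exists with pairwise KL staying below the logarithm of its cardinality, while every $\Sigma_\gamma$ remains a well-conditioned element of $\mathcal{B}(K,M)$ with bounded sup-norm, is the delicate step; this is where the standing hypothesis $K<\sqrt n$ enters, both to keep the Taylor expansion of $\tau K-\log(1+\tau K)$ valid and to prevent the perturbed matrices from leaving the admissible class.
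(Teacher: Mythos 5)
There is a genuine gap, and it is not the deferred ``calibration'' detail you flag at the end --- it is that your family of hypotheses cannot produce the claimed separation at all. With disjoint $K$-blocks added to the identity, the difference $\Sigma_\gamma-\Sigma_{\gamma'}$ is block diagonal, so its operator norm is the \emph{maximum} over blocks, namely $\tau K$, no matter how large the Hamming distance is; packing more hypotheses never increases the separation. Meanwhile the pairwise KL scales like $n\tau^2K^2\,H(\gamma,\gamma')$, and over a Varshamov--Gilbert set both the maximal KL ($\asymp n\tau^2K^2M$) and the log-cardinality ($\asymp M$) grow linearly in $M$, so the Fano ratio is $\asymp n\tau^2K^2$, independent of $M$: no factor of $K$ can be ``harvested'' from the packing. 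Fano then forces $\tau\lesssim 1/(K\sqrt n)$, giving separation $\tau K\lesssim 1/\sqrt n$, a factor $K$ short of the target; conversely, taking $\tau\asymp 1/\sqrt n$ to reach separation $K/\sqrt n$ makes the Fano ratio of order $K^2$, which is not bounded away from one unless $K=O(1)$. So the construction proves only a $c/\sqrt n$ bound, not $cK/\sqrt n$.

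For comparison, the paper does not give a self-contained argument: it notes the bound follows from the construction in the proof of Theorem 3 of Cai, Zhang and Zhou (2010) with their $k$ set to $K$, and omits the proof. A short self-contained route that does work is a two-point (Le Cam) argument with a \emph{spiked} base rather than the identity: take $\Sigma_0$ equal to $I_p$ except that a principal $(K+1)\times(K+1)$ block equals $I+\tfrac12 J$ ($J$ the all-ones matrix), and $\Sigma_1=\Sigma_0+\epsilon J$ on the same block with $\epsilon=c_0/\sqrt n$. Both lie in $\mathcal{B}(K,M)$ (bounded entries, positive definite, bandwidth exactly $K$), and $\|\Sigma_1-\Sigma_0\|_{op}=\epsilon(K+1)\asymp K/\sqrt n$, while the per-sample KL is $\tfrac12\bigl[\delta-\log(1+\delta)\bigr]$ with $\delta=\epsilon(K+1)/\bigl(1+(K+1)/2\bigr)\le 2\epsilon$, so the $n$-sample KL is $\lesssim n\epsilon^2=c_0^2$ and Le Cam gives the stated bound with some $\beta\in(0,1)$. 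This isolates exactly what your construction misses: near the identity, an operator-norm perturbation of size $s$ with a rank-one profile on a $K$-block costs KL $\asymp ns^2$, capping $s$ at $1/\sqrt n$; the extra factor of $K$ must come from perturbing along a direction in which the base covariance already has an eigenvalue of order $K$, so that the perturbation is cheap in KL --- not from enlarging the hypothesis set.
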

Minimax lower bounds in operator norm over the classes of approximately sparse matrices \citep{Cai12breg} and approximately banded matrices  \citep{Cai10}  are provided in 
the cited works. Moreover, an  inspection of the 
proof  of Theorem 3 in \cite{Cai10} shows that the proof of the bound stated  in Theorem \ref{Olower} above follows immediately from it, by the same arguments, 
by letting their $k$ be  our $K$. We therefore state the bound for clarity and completeness, and omit  the proof.  The following theorem shows that our proposed estimator achieves adaptively, up to logarithmic factors, the target rate. 
\begin{theorem}\label{thm:opnormexact2}
The convex banding estimator of
 \eqref{eq:primal} with weights given by either \eqref{eq:groupw} or
 \eqref{eq:genw}, and with $\lambda = 2x\sqrt{\log p/n}$, where $x$ is
 as in Theorem \ref{thm:oracle}, satisfies 
 \begin{equation*}
\| \hatP -\true \|_{op}  \lesssim K\sqrt{\log p/n},\ \mbox{on the set } \mathcal{A}_x,
 \end{equation*}
for any  $\true \in \mathcal{G}_{op}(K)$ that meets the signal strength condition
\[
\min_{\ell\in S} \|\true_{s_{\ell}}\|_2/\sqrt{2\ell} \geq c\,\,\mbox{ for some constant } c>0.
\]
\end{theorem}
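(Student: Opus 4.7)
} The overall strategy has three ingredients: (i) exact bandwidth recovery on $\mathcal{A}_x$, (ii) the data-dependent tapering representation from Theorem \ref{thm:tapering}, and (iii) a careful additive splitting of $\hatP-\true$ into a noise term, a tapering-shrinkage bias term, and a ``true$-B$'' approximation term.

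\emph{Step 1 (Bandwidth recovery).} On the event $\mathcal{A}_x$, my first move is to invoke Theorems \ref{thm:recovery1} and \ref{thm:recovery2} to deduce $\hat{K}=K$. The choice $\lambda = 2x\sqrt{\log p/n}$ exceeds the threshold required by Theorem \ref{thm:recovery1}, yielding $\hat{K}\le K$. For the reverse inequality, the hypothesized signal strength $\min_{\ell\in S}\|\true_{s_\ell}\|_2/\sqrt{2\ell}\ge c$ together with $w_\ell=\sqrt{2\ell}$ gives $\|\true_{s_\ell}\|_2/w_\ell\ge c > 2\lambda$ for $n$ large enough (since $\lambda\to 0$), so Theorem \ref{thm:recovery2} applies and gives $\hat{K}\ge K$. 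Hence $\hat{K}=K$ on $\mathcal{A}_x$, and by Theorem \ref{thm:tapering} I may write $\hatP=\hat T*\S$ where $\hat T$ is Toeplitz, has diagonal $1$, entries in $[0,1]$, and bandwidth $K$.

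\emph{Step 2 (Decomposition).} Let $B\in\mathcal{B}(K)$ be the banded witness from the definition of $\mathcal{G}_{op}(K)$, so $\|\true-B\|_{op}\lesssim K\sqrt{\log p/n}$. I would decompose
\[
\hatP-\true \;=\; \hat T*(\S-\true) \;+\; (\hat T-\mathbf{1})*B \;+\; \bigl(\hat T*(\true-B)-(\true-B)\bigr),
\]
and bound each summand in operator norm by $O(K\sqrt{\log p/n})$. The first summand is $K$-banded with entries bounded by $|\S_{ij}-\true_{ij}|\le x\sqrt{\log p/n}$ on $\mathcal{A}_x$, so the Gershgorin/row-sum bound gives $\|\hat T*(\S-\true)\|_{op}\le (2K+1)x\sqrt{\log p/n}\lesssim K\sqrt{\log p/n}$. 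The third summand is controlled using that $\hat T$ has entries in $[0,1]$ together with a Schur-multiplication operator-norm estimate (e.g., showing $\hat T$ is sufficiently close to PSD, so that $\|\hat T*E\|_{op}\lesssim \|E\|_{op}$, where $E=\true-B$); combined with $\|E\|_{op}\lesssim K\sqrt{\log p/n}$, this gives the desired bound.

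\emph{Step 3 (Main technical estimate).} The hardest piece is $\|(\hat T-\mathbf{1})*B\|_{op}$. Here the plan is to use the explicit product formula $\hat T_{s_m}=\prod_{\ell=m}^{p-1}[\hat\nu_\ell]_+/(w_{\ell m}^2+[\hat\nu_\ell]_+)$ together with the KKT identity $\lambda^2=h_\ell(\hat\nu_\ell)$, which implies $w_{\ell m}^2/(w_{\ell m}^2+\hat\nu_\ell)\le \lambda w_{\ell m}/\|\hat R^{(\ell)}_{s_m}\|_2$ for every $\ell\ge m$. Using the signal strength condition together with $\mathcal A_x$ gives $\|\S_{s_m}\|_2\ge c\sqrt{2m}/2$ for $m\ge p-K$, and the recursion $\hat R^{(\ell+1)}_{s_m}=[\hat\nu_\ell]_+/(w_{\ell m}^2+[\hat\nu_\ell]_+)\,\hat R^{(\ell)}_{s_m}$ combined with a direct lower bound on $\|\hatP_{s_m}\|_2$ yields $\|\hat R^{(\ell)}_{s_m}\|_2\gtrsim\sqrt{m}$ uniformly in $\ell$. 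Plugging in the weights $w_{\ell m}=\sqrt{2\ell}/(\ell-m+1)$, telescoping the product, and summing in $\ell$ from $m$ to $p-1$ delivers
\[
\max_{m\ge p-K}\bigl|1-\hat T_{s_m}\bigr| \;\le\; \sum_{\ell=m}^{p-1}\frac{w_{\ell m}^2}{w_{\ell m}^2+\hat\nu_\ell} \;\lesssim\; \lambda\log K \;\lesssim\; \sqrt{\log p/n},
\]
up to logarithmic factors. Since $(\hat T-\mathbf{1})*B$ is $K$-banded with entries bounded by $M\cdot\max_m|1-\hat T_{s_m}|$, the row-sum bound gives $\|(\hat T-\mathbf{1})*B\|_{op}\lesssim K\sqrt{\log p/n}$. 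Combining all three summands yields the claim.

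\emph{Main obstacle.} The crux of the proof is Step 3: converting the KKT conditions into a sharp uniform estimate on the tapering deviation $1-\hat T_{s_m}$ within the band. This requires carefully propagating the lower bound on $\|\S_{s_m}\|_2$ supplied by the signal-strength hypothesis through the nested recursion defining $\hat R^{(\ell)}_{s_m}$, while exploiting the decay of the weights $w_{\ell m}=\sqrt{2\ell}/(\ell-m+1)$ in $\ell-m$ so that the sum telescopes at the right $\log K$ rate rather than blowing up. A secondary subtlety is the Schur-product bound for the $\hat T*(\true-B)$ term, for which one either needs to verify that $\hat T$ is positive semidefinite (so the Schur product theorem applies), or to invoke a substitute operator-norm inequality that respects the Toeplitz banded structure of $\hat T$.
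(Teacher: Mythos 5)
Your skeleton (bandwidth recovery, then the tapering representation of Theorem \ref{thm:tapering}, then showing the data-driven taper barely shrinks inside the band) is the same as the paper's, but your Step 3 — which you correctly identify as the crux — has a genuine gap as written. You bound $h_{\ell,m}:=w_{\ell m}^2/(w_{\ell m}^2+\hat\nu_\ell)\le \lambda w_{\ell m}/\|\hat R^{(\ell)}_{s_m}\|_2$ and then need $\|\hat R^{(\ell)}_{s_m}\|_2\gtrsim\sqrt{m}$ \emph{uniformly in $\ell\ge m$}, which you justify by ``a direct lower bound on $\|\hatP_{s_m}\|_2$.'' That is circular: since $\|\hatP_{s_m}\|_2\le\|\hat R^{(\ell)}_{s_m}\|_2$ for every $\ell$, a lower bound on $\|\hatP_{s_m}\|_2$ of order $\sqrt m$ is essentially the statement that the cumulative shrinkage $1-\hat T_{s_m}$ is small, i.e.\ the conclusion you are trying to prove (it could be rescued by an induction/bootstrap over $\ell$, but you have not set one up). The paper avoids this entirely: from the KKT identity $\lambda^2=h_\ell(\hat\nu_\ell)$ it keeps only the $m=\ell$ term, for which $\hat R^{(\ell)}_{s_\ell}=\S_{s_\ell}$ is known exactly (no shrinkage has yet touched $s_\ell$ at step $\ell$), giving $\hat\nu_\ell\ge w_\ell\|\S_{s_\ell}\|_2/\lambda-w_\ell^2\gtrsim \ell/\lambda$ under the signal condition and $\mathcal A_x$. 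Then $\sum_{\ell\ge m}h_{\ell,m}\le\sum_{\ell\ge m}w_{\ell m}^2/\hat\nu_\ell\le\bigl(\sum_{\ell\ge m}w_{\ell m}^2/(2\ell)\bigr)\big/\min_\ell\bigl(\hat\nu_\ell/(2\ell)\bigr)\lesssim\lambda$ with \emph{no} logarithmic loss, because $w_{\ell m}^2/(2\ell)=(\ell-m+1)^{-2}$ is summable. Your route, even if the circularity were repaired, only yields $\lambda\log K$ per subdiagonal and hence $K\sqrt{\log p/n}\,\log K$ overall, which is strictly weaker than the theorem as stated.

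A second, smaller problem is your Schur-product step for $\hat T*(\true-B)$. The taper $\hat T$ is a banded Toeplitz matrix with entries in $[0,1]$ and is \emph{not} positive semidefinite in general (already the pure band indicator fails to be PSD), so the Schur product theorem is unavailable, and generic Schur-multiplier bounds for band truncations cost another factor of order $\log K$ (the $L^1$ norm of a Dirichlet-type kernel). The paper's decomposition sidesteps this: it never Schur-multiplies $\true-B$ by $\hat T$, but instead writes $\|\hatP-\true\|_{op}\le\|\hatP_S-\S_S\|_{op}+\|\S_S-\true_S\|_{op}+\|\true_{S^c}\|_{op}$, controls the two in-band terms entrywise through the row-sum norm $\|\cdot\|_{1,1}$ (using the claim $|\hatP_{ij}-\S_{ij}|\le c\lambda$ inside the band, which is exactly what the $\hat\nu_\ell$ bound above delivers), and charges the out-of-band part to the $\mathcal G_{op}(K)$ closeness condition. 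If you rework Step 3 using the $m=\ell$ trick and replace your Step 2 splitting by the paper's band/off-band splitting with $\|\cdot\|_{1,1}$ bounds, your argument goes through at the advertised rate.
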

\begin{proof}
  See Appendix \ref{sec:proof-onorm}.
\end{proof}
\begin{rem} {\bf A counter example}\label{counter_example}
\textnormal{If the signal strength condition fails and $p > n$, then
  the estimator $\hatP$  may not  even be consistent in operator
  norm.  The following example illustrates this.
Let $\true$ be such that $\true_{ii}=1$ for all $i$,
$\true_{12}=\true_{21}=0.5$, and $\true_{ij}=0$ otherwise.
Then $\|\true_{s_{p-1}}\|_2/\sqrt{2(p-1)} = o(1)$
and the signal
strength condition cannot hold when $p$ grows.  For the above $\true$, $S = \{p-1, p\}$. For the estimator in Theorem \ref{thm:opnormexact2} with $\lambda = 2x\sqrt{\log p/n}$, 
$\hat S \subseteq \{p-1, p\}$  by Theorem \ref{thm:recovery1}.  Then, by (\ref{recur}) and (\ref{esthat}) of Theorem \ref{thm:recovery1} above, and recalling that $w_{p-1} = \sqrt{2(p-1)}$,  we have 
\[ \hatP_{s_{p-1}} = \frac{ [\hat{\nu}_{p-1}]_{+} }  {   [\hat{\nu}_{p-1}]_{+} +2 (p-1)} \S_{s_{p-1}},\]
where $\hat{\nu}_{p-1}$ satisfies
\begin{equation} \label{1} \lambda^2 = \frac{2(p-1) }{(2(p-1) + \hat{\nu}_{p-1})^2}  \| \S_{s_{p-1}} \|^2_2.\end{equation}
Notice  that  
we have 
\[
\|\S_{s_{p-1}}\|_2 \leq \|\true_{s_{p-1}}\|_2 +
\|\S_{s_{p-1}} -\true_{s_{p-1}}\|_2\leq \sqrt{2\times 0.5^2} +
\|\S_{s_{p-1}} -\true_{s_{p-1}}\|_2 < \lambda \sqrt{2(p-1)}
\]
with high probability, for large $p$.  Then,  (\ref{1}) implies that  $\hat{\nu}_{p-1} \leq 0$, in which case 
$
\hatP_{s_{p-1}} = 0
$,  so  $\|\hatP-\true\|_{op} \geq |\true_{12}|=0.5$, and the estimator cannot be consistent. 
}
\end{rem}


Our focus in this section was on the behavior of our estimator over a class that generalizes that of banded matrices, such as   $\mathcal{G}_{op}$,  since no minimax adaptive estimators in operator norm have been constructed, to the best of our knowledge, for this class. Our estimator is however slightly sub-optimal, in operator norm, over 
different classes, for instance over the class  of approximately banded matrices.  It is immediate to show that it achieves,  adaptively, the slightly suboptimal  rate $O\{(\log p/n)^{\frac{\alpha}{2\alpha + 2}}\}$ over the class $\mathcal{D}_{\alpha}$,  when a  signal strength condition similar to that given in  Theorem \ref{thm:opnormexact2} is satisfied.
However, the minimax optimal rate over this class is  $O\{(\log p/n)^{\frac{\alpha}{2\alpha + 1}}\}$,
and has been shown to be achieved, non-adaptively, 
by the tapering estimator of  \citet{Cai10}
and in  \citet{Xiao14}, by the banding estimator. For the latter case,
a SURE-type bandwidth selection is proposed,  as an alternative to the
usual cross validation-type criteria, for the practical bandwidth choice.  Moreover, the optimal rate of convergence
in operator norm can be achieved, adaptively, 
by  a block-thresholding estimator proposed by \citet{Cai12breg}. For
these reasons, we do not further pursue the issue of operator norm minimax adaptive estimation over $\mathcal{D}_{\alpha}$ in this work. 

\subsection{Positive definite, banded covariance estimation}
\label{sec:pd}
In this section, we study the positive definiteness of our estimator.
Empirically, we have observed $\hatP$ to be positive definite except in
quite extreme circumstances (e.g., very low $n$ settings).  We begin
by proving that $\hatP$ is positive definite with high probability
assuming certain conditions.

\begin{theorem}
Under Assumptions 1 and 2, the convex banding estimator with weights given by either \eqref{eq:groupw} or
 \eqref{eq:genw}  and with $\lambda = 2x\sqrt{\log p/n}$ (for appropriately large $x$) has
minimum eigenvalue at least $C'K\sqrt{\log p/n}>0$ with probability larger than $1 - cp^{-1}$, for some $c > 0$, provided that  
\[ (i) \  \lambda_{\min}(\true)\ge2C' K\sqrt{\log p/n}, \]
and \[ (ii) \ 
\min_{\ell\in S} \|\true_{s_{\ell}}\|_2/\sqrt{2\ell} \geq c'\,\,\mbox{ for some constant } c' >0.
\]
\label{thm:psd}
\end{theorem}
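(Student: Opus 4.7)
The plan is to deduce Theorem \ref{thm:psd} as a direct consequence of the operator-norm bound already established in Theorem \ref{thm:opnormexact2}, combined with Weyl's inequality and the probability bound from Lemma \ref{lem:whp}. The signal-strength hypothesis (ii) is exactly what is needed to invoke Theorem \ref{thm:opnormexact2}, and hypothesis (i) ensures that $\true$ has enough ``room'' in its smallest eigenvalue to absorb the estimation error.

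First, I would fix $x$ large enough so that both Lemma \ref{lem:whp} and Theorem \ref{thm:opnormexact2} apply, and work on the event $\mathcal A_x$. By Lemma \ref{lem:whp}, $\mathbb{P}(\mathcal A_x) \ge 1 - c/p$ for some $c > 0$. On this event, hypothesis (ii) lets me invoke Theorem \ref{thm:opnormexact2}, which yields a deterministic constant $C'' > 0$ (independent of $n, p, K$) such that
\begin{equation*}
\|\hatP - \true\|_{op} \le C'' K \sqrt{\log p / n}.
\end{equation*}

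Next, I would apply Weyl's inequality, which states $|\lambda_{\min}(\hatP) - \lambda_{\min}(\true)| \le \|\hatP - \true\|_{op}$. Combining with the above bound and hypothesis (i),
\begin{equation*}
\lambda_{\min}(\hatP) \ge \lambda_{\min}(\true) - \|\hatP - \true\|_{op} \ge 2C' K \sqrt{\log p / n} - C'' K \sqrt{\log p / n}.
\end{equation*}
Choosing $C' \ge C''$ (which is merely a bookkeeping matter, since both constants are universal and $C''$ was produced by Theorem \ref{thm:opnormexact2}) gives
\begin{equation*}
\lambda_{\min}(\hatP) \ge (2C' - C'') K\sqrt{\log p/n} \ge C' K \sqrt{\log p / n} > 0,
\end{equation*}
which is the desired conclusion. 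Since this holds on $\mathcal A_x$, the claim holds with probability at least $1 - c/p$.

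I do not foresee a real obstacle here: the substantive work has already been done in Theorem \ref{thm:opnormexact2}, and this result is essentially a repackaging of that operator-norm rate via Weyl's inequality. The only modest subtlety is making the constant $C'$ in the statement consistent with the (unspecified) constant produced by Theorem \ref{thm:opnormexact2}; this is handled by taking $C'$ sufficiently large relative to the operator-norm constant and then re-absorbing it into the lower bound hypothesis (i).
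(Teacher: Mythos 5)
Your proposal is correct and follows essentially the same route as the paper's proof: the paper also bounds $\lambda_{\min}(\hatP)\ge\lambda_{\min}(\true)-\|\hatP-\true\|_{op}$ via a Rayleigh-quotient (i.e.\ Weyl) argument, invokes Theorem~\ref{thm:opnormexact2} for the operator-norm rate on $\mathcal A_x$, and finishes with hypothesis~(i). Your only addition is the explicit bookkeeping relating the constant $C''$ from Theorem~\ref{thm:opnormexact2} to the $C'$ in the statement, which the paper leaves implicit.
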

\begin{proof}
  See Appendix \ref{sec:pd-proofs}.
\end{proof}
Although we find empirically that $\hatP$ is reliably positive
definite, it is reasonable to desire an estimator that is {\em always}
positive definite.  In such a case, we suggest using the estimator
$\tildeP$ defined in \eqref{eq:tilde}.  Algorithm \ref{alg:psd} of
Appendix \ref{sec:pd-proofs} provides a BCD approach similar to
Algorithm \ref{alg:hat}.
\begin{cor}
Under the assumptions of Theorem \ref{thm:psd}, $\tildeP=\hatP$ with
high probability as long as we choose $\delta \le C'K\sqrt{\frac{\log p}{n}}$.
\end{cor}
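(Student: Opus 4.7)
The plan is to argue that this corollary is essentially a direct consequence of Theorem \ref{thm:psd} together with the observation that problem \eqref{eq:tilde} is problem \eqref{eq:primal} with an additional feasibility constraint. First I would invoke Theorem \ref{thm:psd}, which under the stated assumptions gives $\lambda_{\min}(\hatP) \ge C' K \sqrt{\log p / n}$ with probability at least $1 - c p^{-1}$. Call this high-probability event $\mathcal E$.

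Next, I would observe that on $\mathcal E$, whenever $\delta \le C' K \sqrt{\log p / n}$ we have $\hatP \succeq \lambda_{\min}(\hatP) I_p \succeq \delta I_p$, so $\hatP$ is feasible for the constrained optimization problem \eqref{eq:tilde}. Since the objective in \eqref{eq:tilde} is identical to that in \eqref{eq:primal} and the feasible set of \eqref{eq:tilde} is a subset of the feasible set (i.e., all of $\real^{p\times p}$) of \eqref{eq:primal}, any feasible point of \eqref{eq:tilde} that attains the unconstrained minimum must also be the constrained minimizer. Strict convexity of the objective (Remark 1) implies the minimizer is unique, so $\tildeP = \hatP$ on $\mathcal E$.

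The only subtlety worth flagging is the consistency of the probability statement: we need the high-probability event produced by Theorem \ref{thm:psd} to be the same event on which we conclude $\tildeP = \hatP$, which it is by construction. There is no real obstacle here; the corollary is almost immediate once Theorem \ref{thm:psd} is in hand, and the role of the threshold $C' K \sqrt{\log p/n}$ is simply to guarantee that the positive-definiteness constraint $\Sigma \succeq \delta I_p$ is inactive at the unconstrained optimum.
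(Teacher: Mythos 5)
Your argument is correct and is essentially the paper's own: Theorem \ref{thm:psd} gives $\lambda_{\min}(\hatP)\ge C'K\sqrt{\log p/n}\ge\delta$ with high probability, so the constraint $\Sigma\succeq\delta I_p$ is inactive at the unconstrained optimum and may be dropped without changing the (unique, by strict convexity) solution, whence $\tildeP=\hatP$ on that event. No substantive difference from the paper's proof.
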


We conclude this section by noting that, in light of the corollary,
$\tildeP$ has all the same properties as $\hatP$ under the
additional assumptions of Theorem \ref{thm:psd}. 

\section{Empirical study}

\label{sec:empirical}
\subsection{Simulations}
\label{sec:simulations}

\subsubsection{Convergence rate dependence on $K$}
\label{sec:conv-rate-depend-K}

We fix $n=100$, and for $p\in\{500,1000,2000\}$ and ten equally
spaced values of $K$ between 10 and 500 we simulate data according to
an instance of a moving-average($K$) process.  In particular, we draw $n$ multivariate normal
vectors with covariance matrix
$$
\true_{ij}=
\begin{cases}
  1-\frac{1}{K}|i-j|&\text{ for }|i-j|\le K\\
  0&\text{ for }|i-j|> K.
\end{cases}
$$
We draw 20 samples of size $n$ to compute the average scaled squared
Frobenius norm, $\|\hatP-\true\|_F^2/p$, and operator norm, $\|\hatP-\true\|_{op}$.  Figures \ref{fig:rates-F} and \ref{fig:rates-op} show how these two
quantities vary with $K$ when we take $\lambda=2\sqrt{\log(p)/n}$.  Both are seen to grow approximately linearly in $K$ in agreement with our bounds for these
quantities given in Corollary \ref{cor:Fnorm} and Theorem \ref{thm:opnormexact2}.  In the right panel
of each figure, we scale the Frobenius and operator norms by the
functions of $p$ appearing on the right-hand side of Corollary \ref{cor:Fnorm} and Theorem \ref{thm:opnormexact2}.  The fact
that with this scaling the curves become more aligned shows that the
$p$ dependence of the bounds holds (approximately).  Also, we see that
the behavior matches that of the bounds most closely for large $p$ and small $K$, 
\begin{figure}
  \centering
  \includegraphics[width=0.45\linewidth]{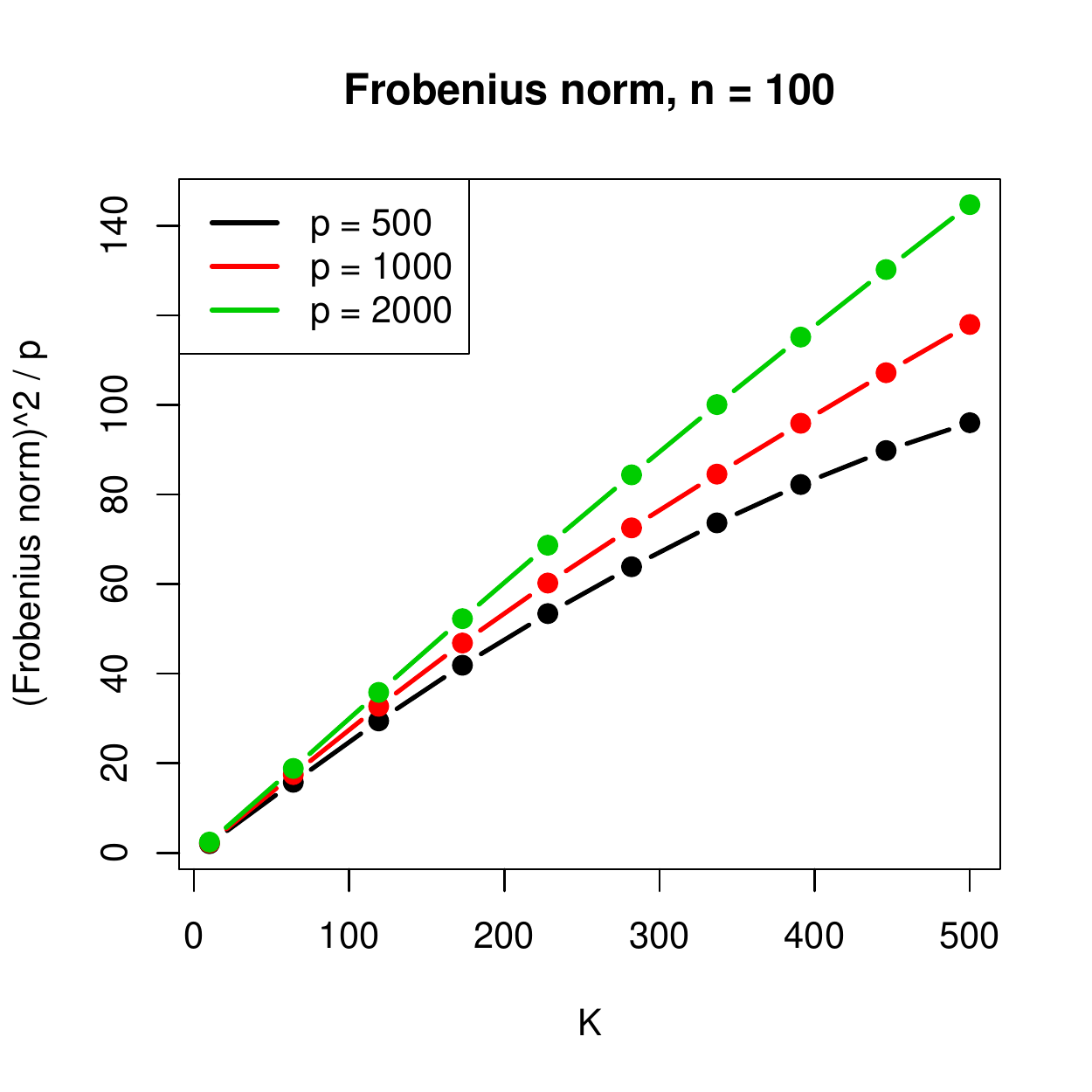}
  \includegraphics[width=0.45\linewidth]{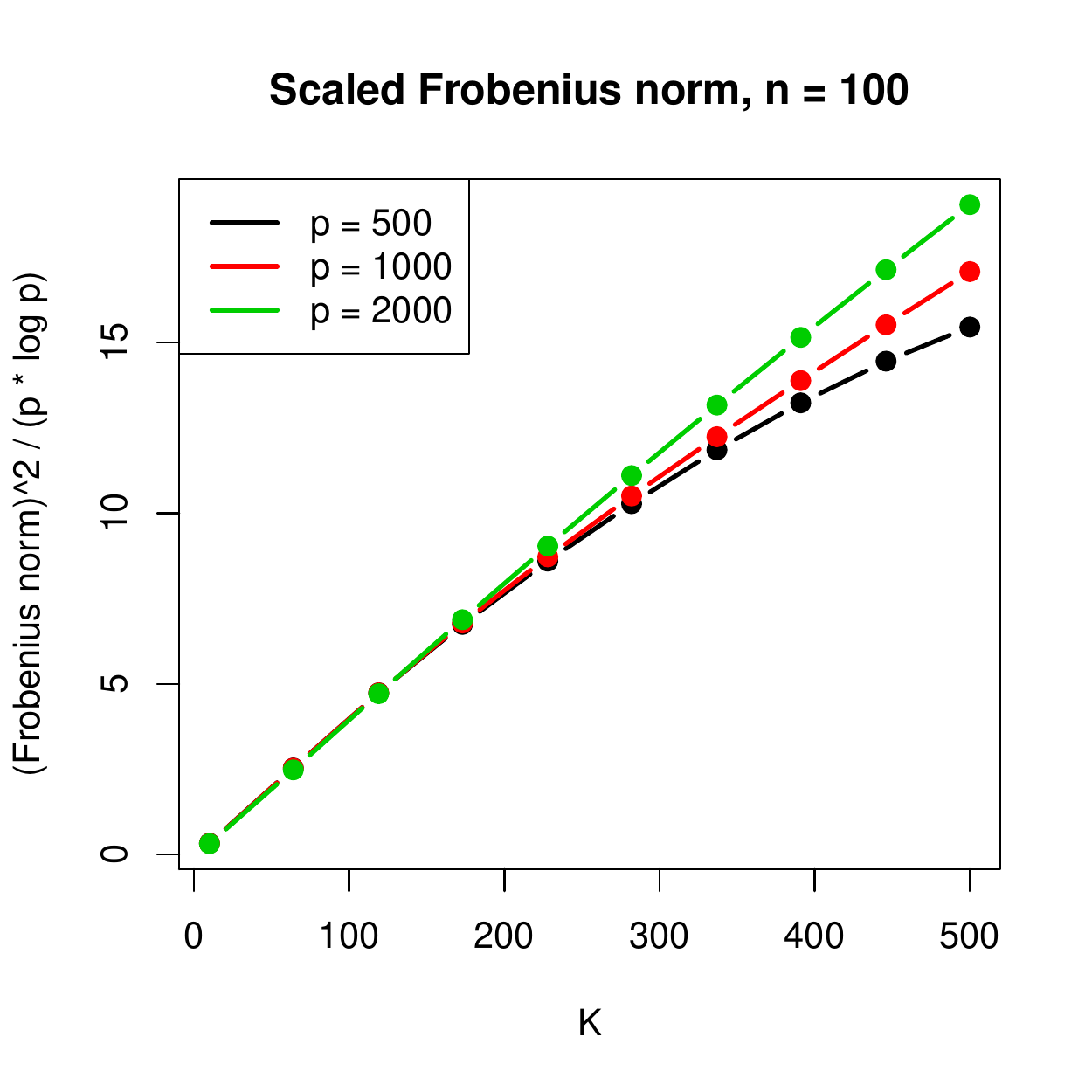}
  \caption{\em Convergence in Frobenius norm. Monte Carlo estimate of
    (Left) $E\left[\|\hatP-\true\|_F^2\right]/p$ and (Right) $E\left[\|\hatP-\true\|_F^2\right]/(p\log p)$ as a function of $K$, the bandwidth of $\true$.}
  \label{fig:rates-F}
\end{figure}
\begin{figure}
  \centering
  \includegraphics[width=0.45\linewidth]{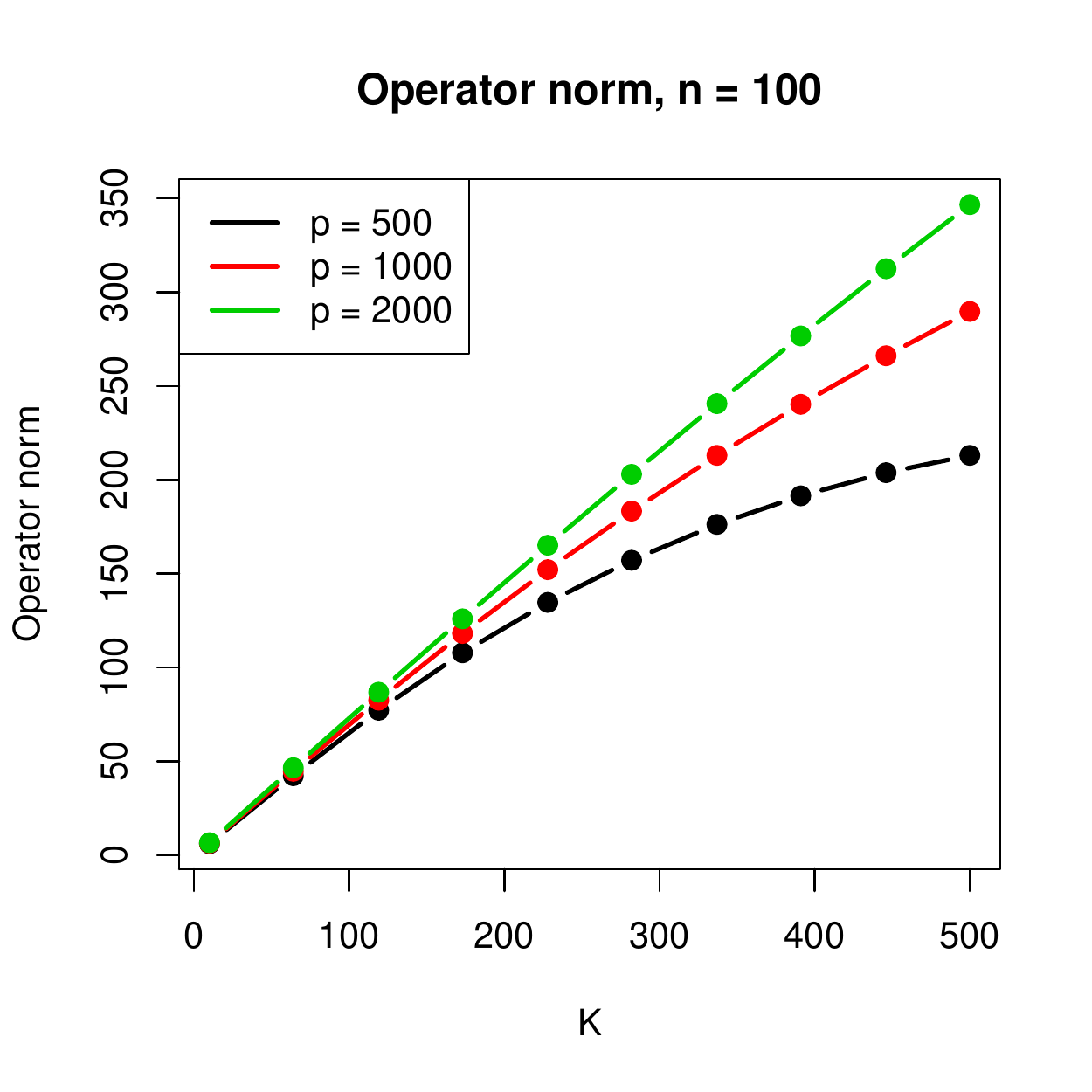}
  \includegraphics[width=0.45\linewidth]{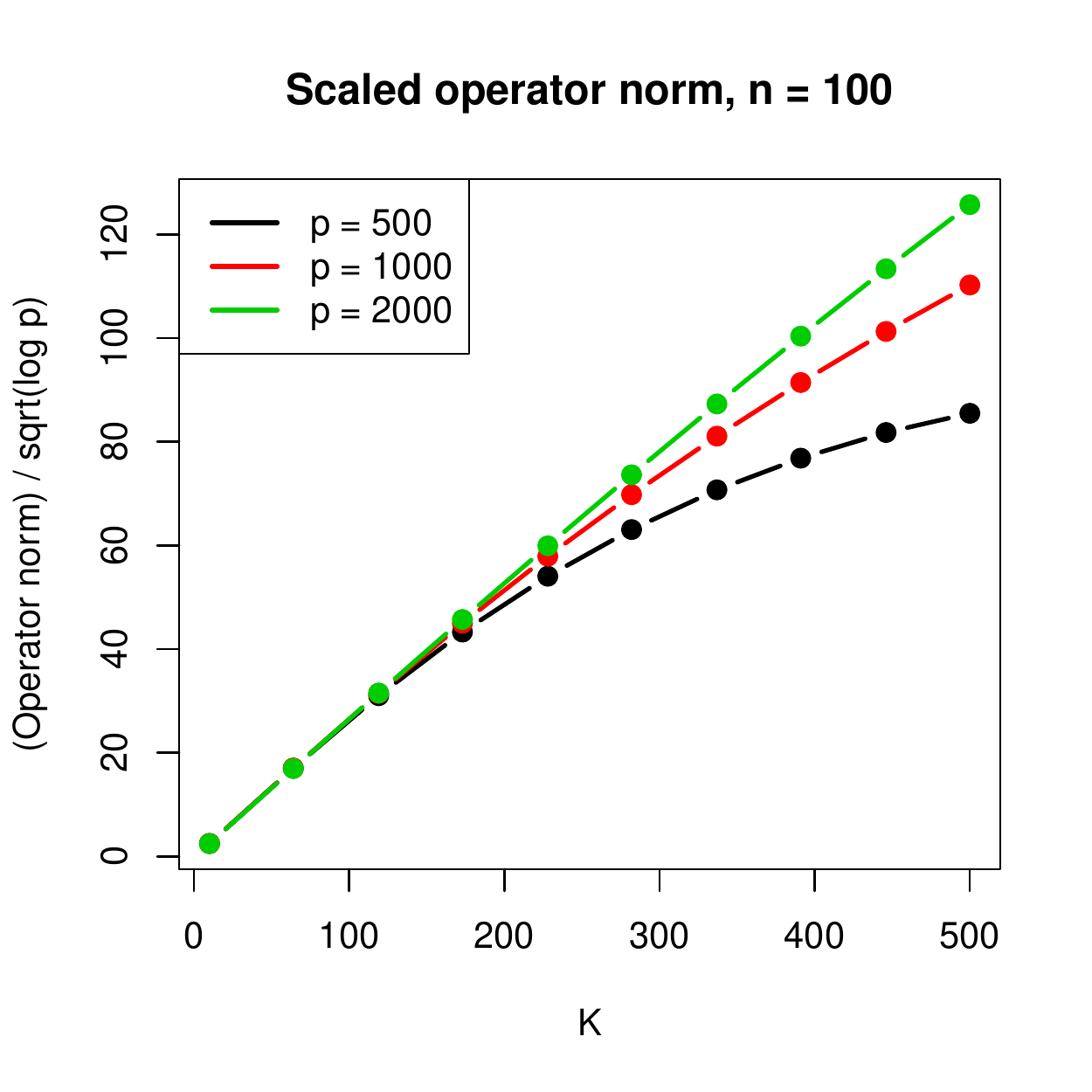}
  \caption{\em Convergence in operator norm. Monte Carlo estimate of
    (Left) $E\left[\|\hatP-\true\|_{op}\right]$ and (Right) $E\left[\|\hatP-\true\|_{op}\right]/\sqrt{\log p}$ as a function of $K$, the bandwidth of $\true$.}
  \label{fig:rates-op}
\end{figure}
In Figure \ref{fig:rates_simple} we repeat the same simulation for $\hatP$
with the simple weighting scheme, $w_{\ell m}=\sqrt{\ell}$.  We find
that this weighting scheme performs empirically much worse than the
general weighting scheme of \eqref{eq:genw}.
\begin{figure}
  \centering
  \includegraphics[width=0.45\linewidth]{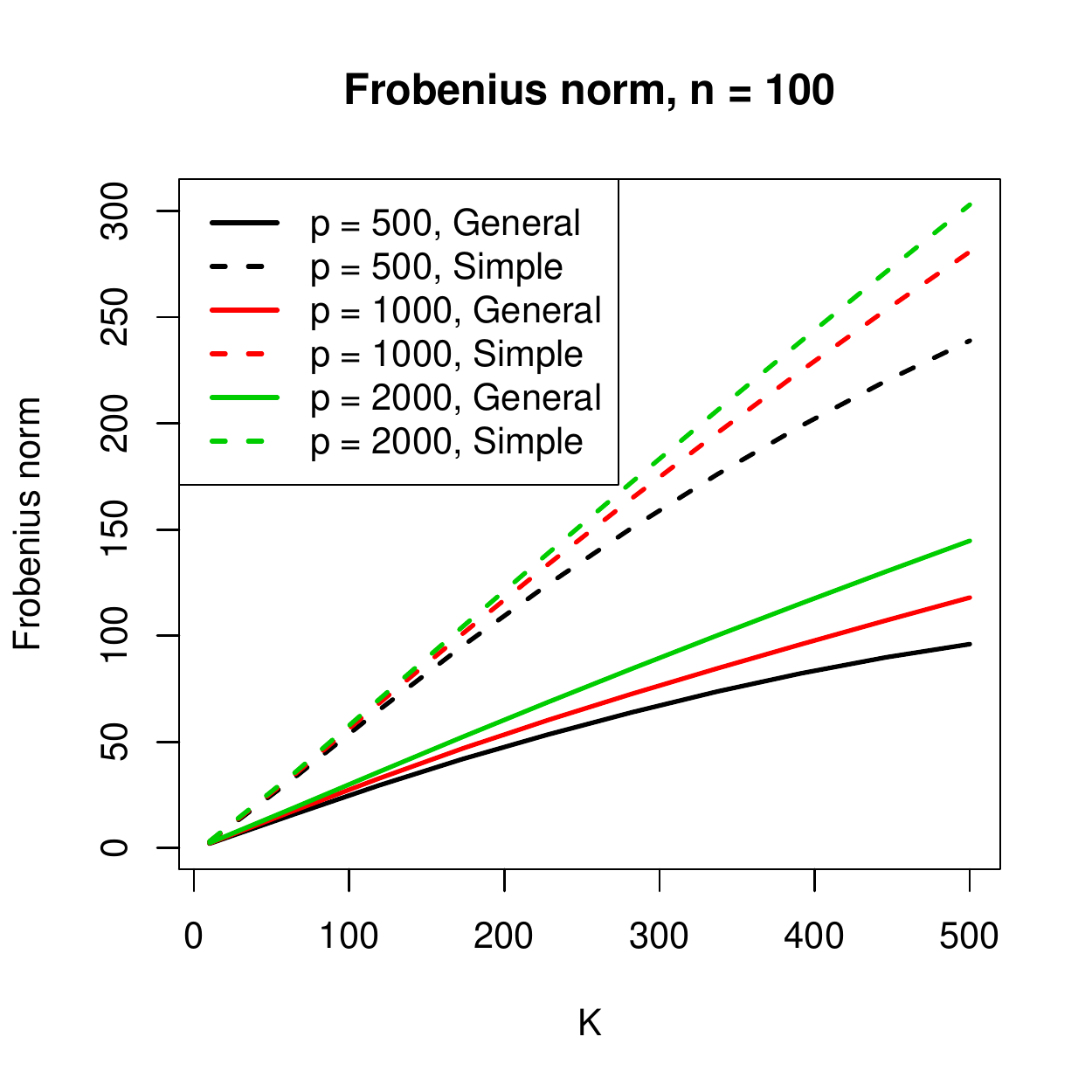}
  \includegraphics[width=0.45\linewidth]{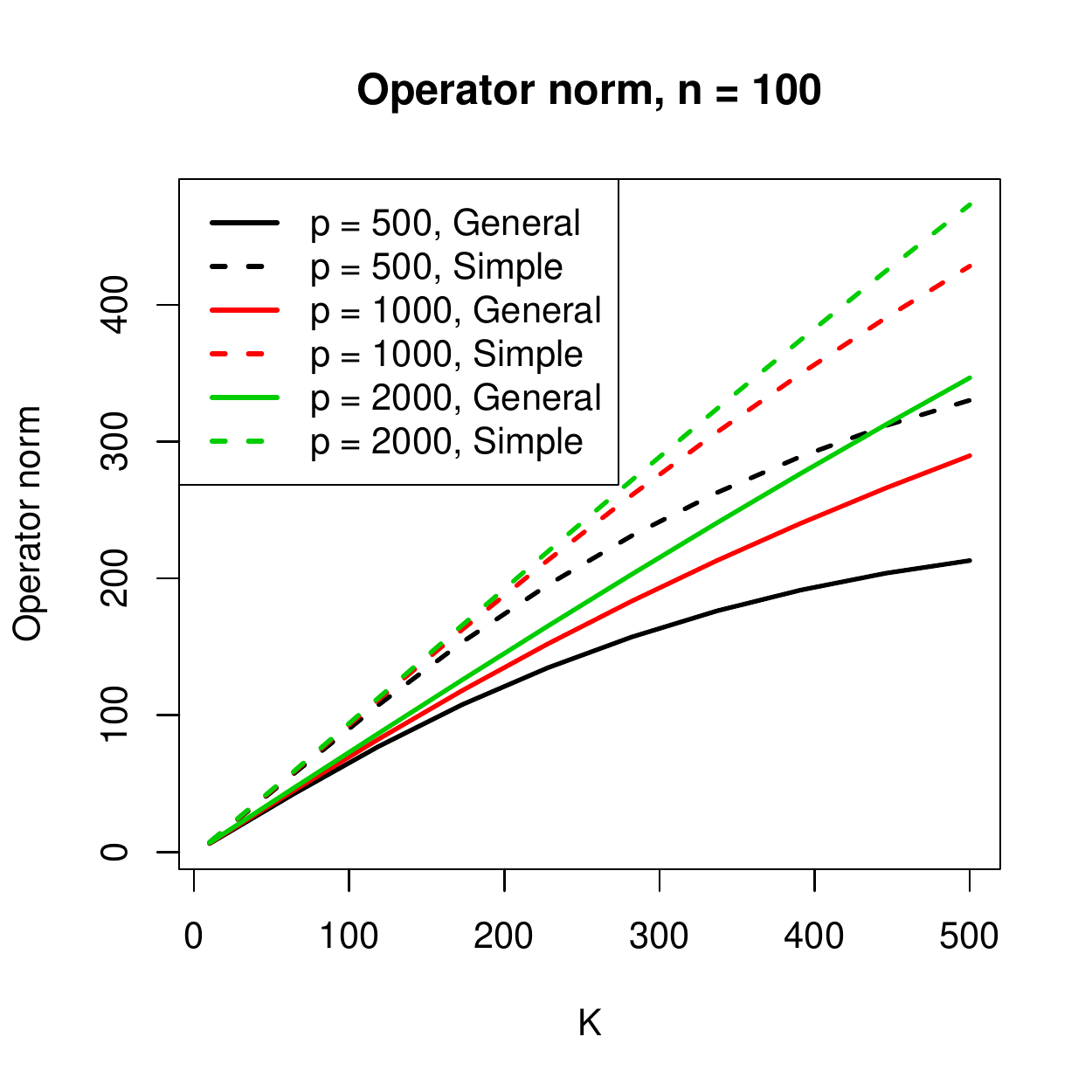}
  \caption{\em Comparison of our procedure with simple and general weights.}
  \label{fig:rates_simple}
\end{figure}
\subsubsection{Convergence rate dependence on $n$}
\label{sec:conv-rate-depend-n}

To investigate the dependence on sample size, we vary $n$ along an equally-spaced grid
from 10 to 500, fixing $K=50$ and taking, as before,
$p\in\{500,1000,2000\}$.  Simulating as before and taking, again
$\lambda=2\sqrt{\log(p)/n}$, Figures \ref{fig:rates-F-varyn} and
\ref{fig:rates-op-varyn} exhibit the $1/n$ and $1/\sqrt{n}$ dependence
suggested by the bounds in Corollary \ref{cor:Fnorm} and Theorem \ref{thm:opnormexact2}.

\begin{figure}
  \centering
  \includegraphics[width=0.45\linewidth]{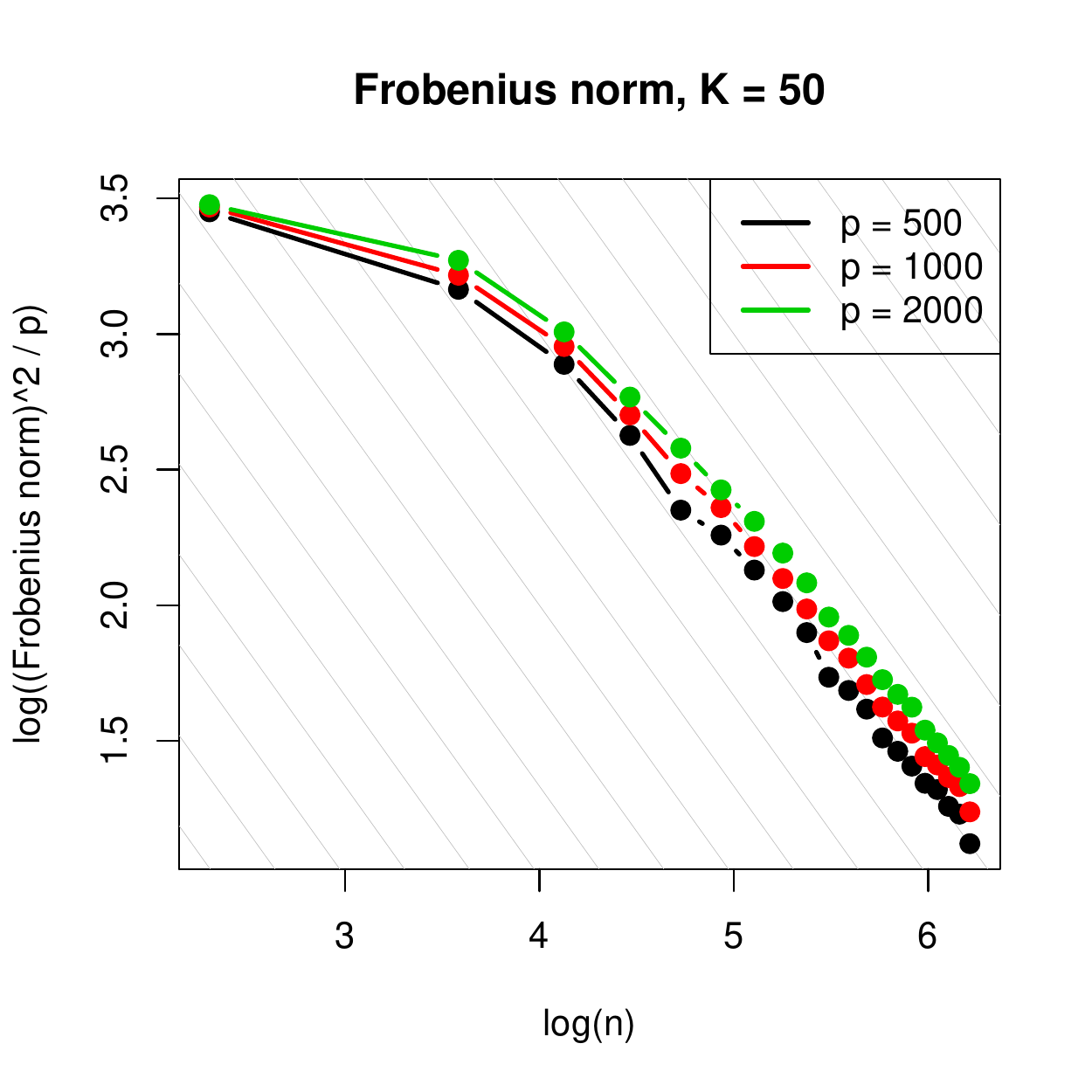}
  \includegraphics[width=0.45\linewidth]{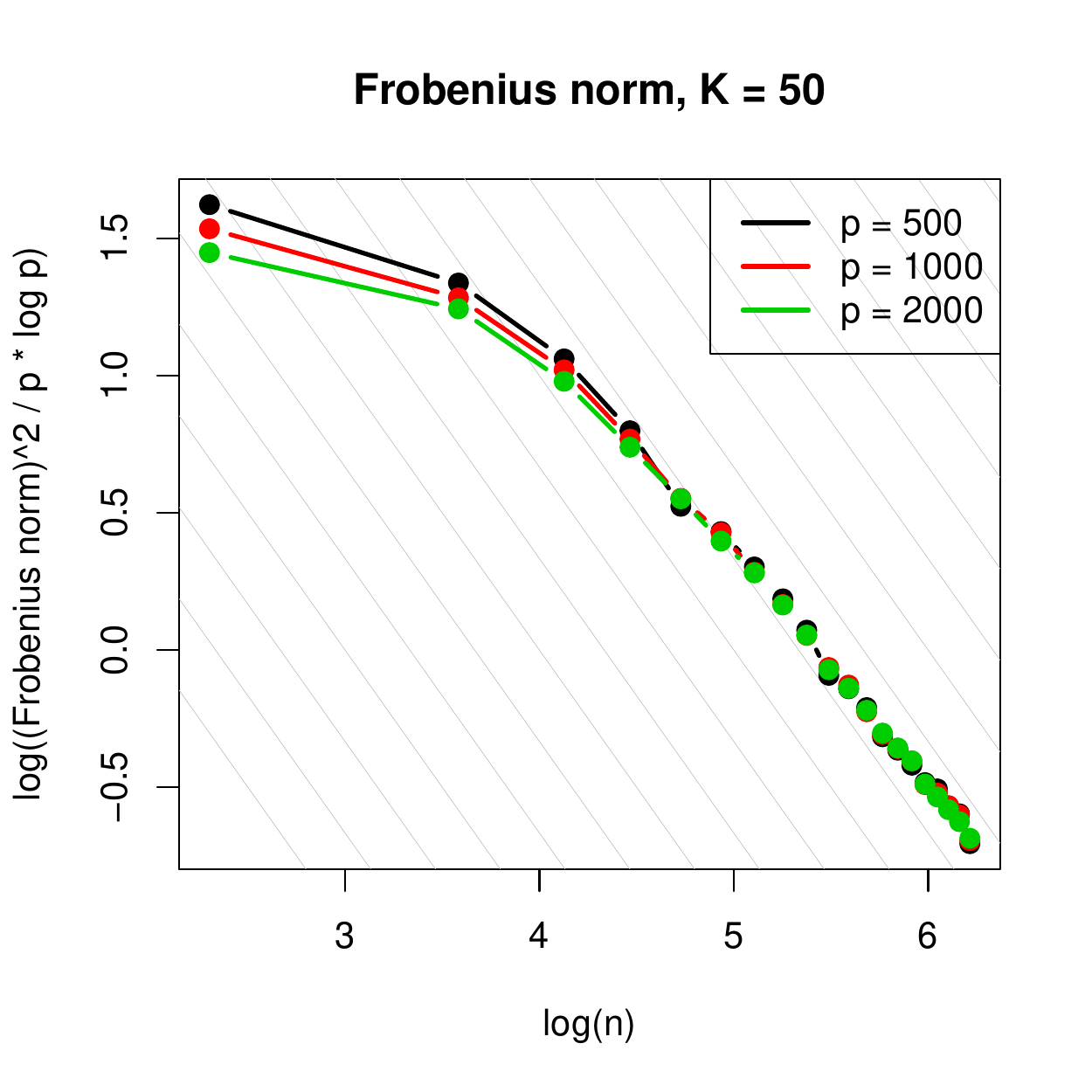}
  \caption{\em (Left:) For large $n$, expected $E\left[\|\hatP-\true\|_F^2\right]/p$ is
    seen to decay like
    $n^{-1}$ as can be seen from $-1$ slope of $\log$-$\log$ plot
    (gray lines are of slope $-1$).
    (Right:) Scaling this quantity by $\log p$ aligns the curves for
    large $n$.  Both of these phenomena are suggested by Corollary \ref{cor:Fnorm}.}
  \label{fig:rates-F-varyn}
\end{figure}
\begin{figure}
  \centering
  \includegraphics[width=0.45\linewidth]{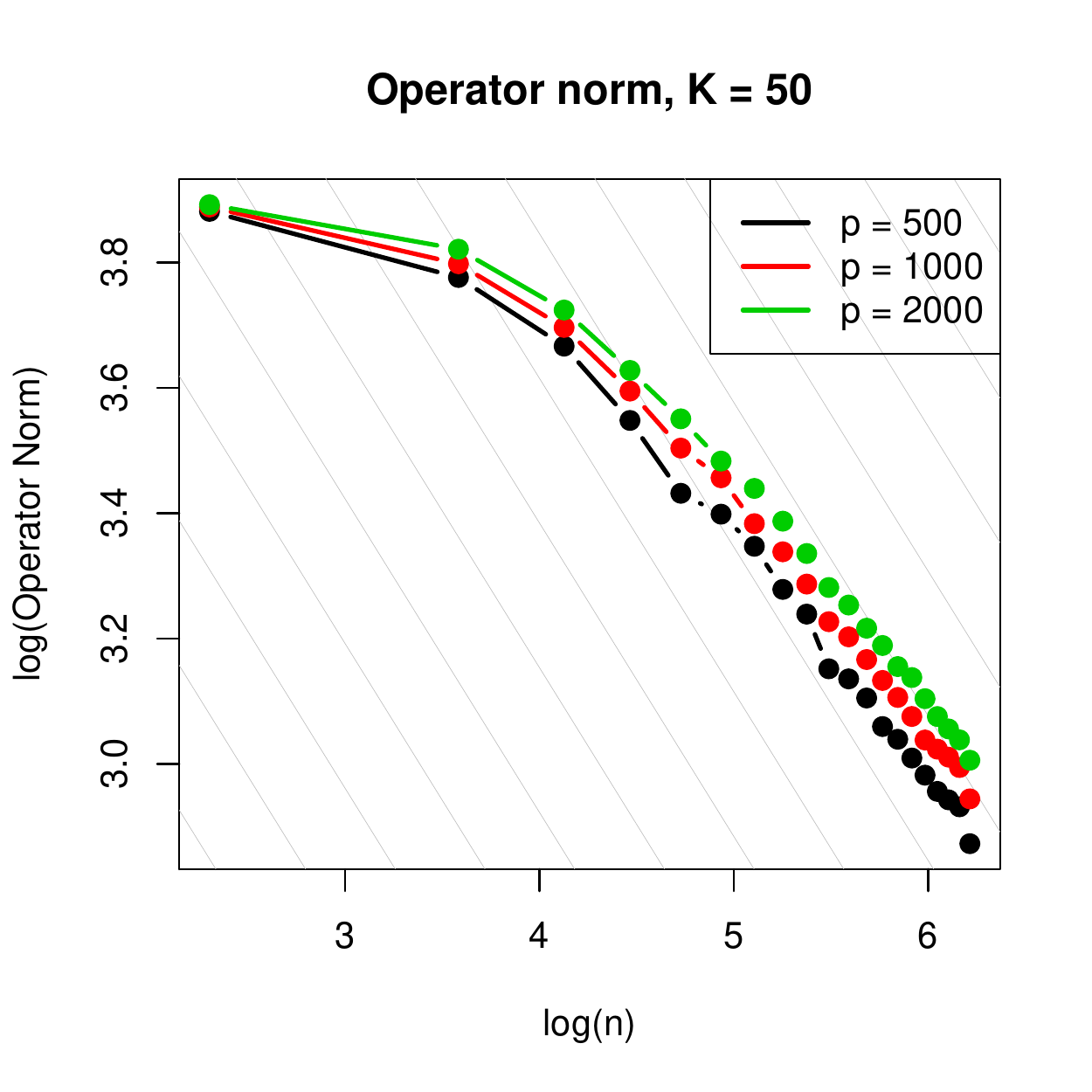}
  \includegraphics[width=0.45\linewidth]{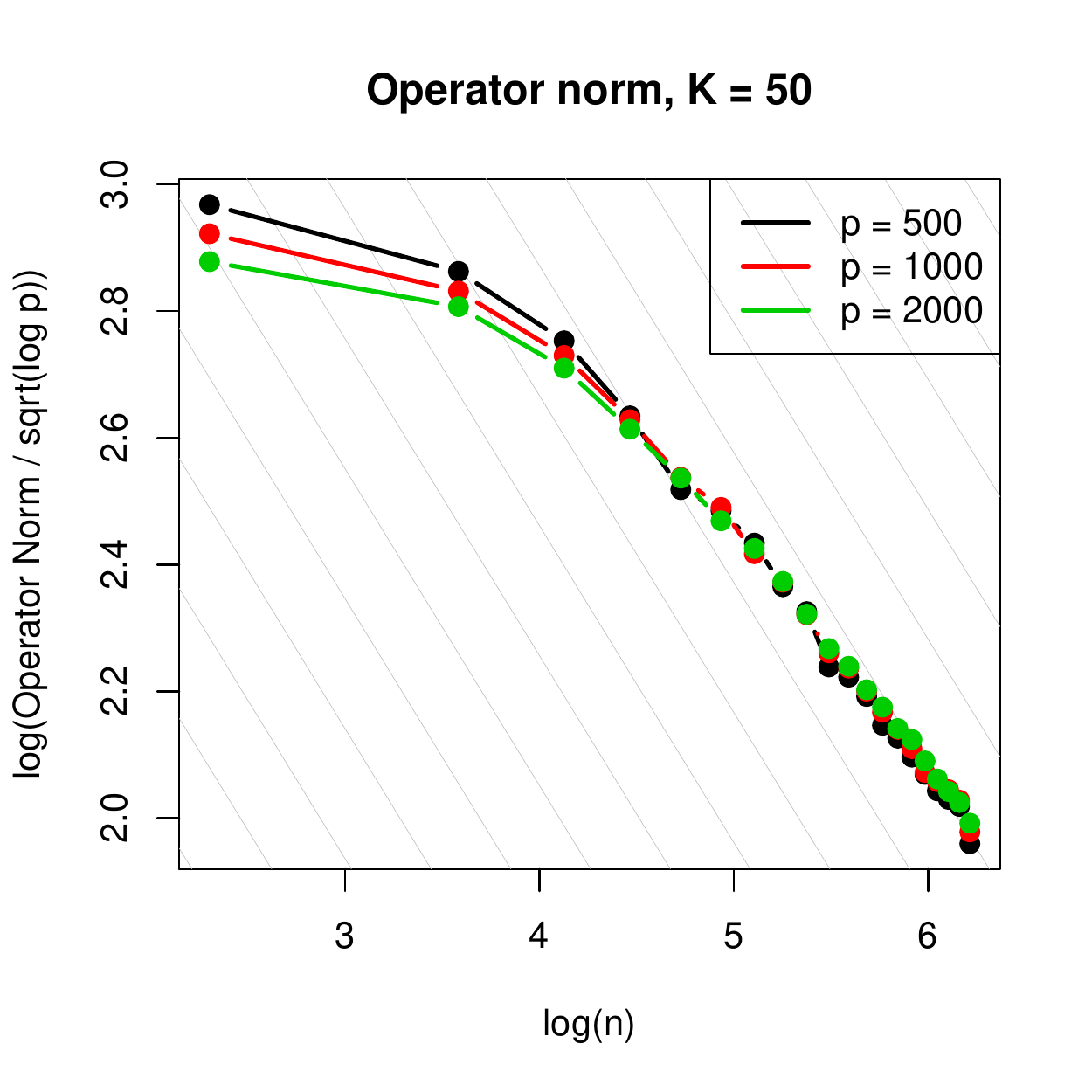}
   \caption{\em (Left:) For large $n$, expected $E\|\hatP-\true\|_{op}$ is
    seen to decay like
    $n^{-1/2}$ as can be seen from $-1/2$ slope of $\log$-$\log$ plot
    (gray lines are of slope $-1/2$).
    (Right:) Scaling this quantity by $\sqrt{\log p}$ aligns the curves for
    large $n$.  Both of these phenomena are suggested by Theorem \ref{thm:opnormexact2}.}
 \label{fig:rates-op-varyn}
\end{figure}

\subsubsection{Comparison to other banded estimators}
\label{sec:comp-other-band}
In this section, we compare the performance of our estimator to
that of the following methods that perform banded covariance
estimation:
\begin{itemize}
\item {\tt Hier Band}: our method with weights given in
  \eqref{eq:genw} and no eigenvalue constraint.
\item {\tt Simple Hier Band}: our method with weights given in
  \eqref{eq:basicw} and no eigenvalue constraint.
\item {\tt GP Band}: group lasso without hierarchy (weighting scheme
  given in \eqref{eq:groupw}) and no eigenvalue constraint.
\item {\tt Block Thresh}: \citet{Cai12}'s block-thresholding
  approach (our implementation).
\item {\tt Nested Lasso}: \citet{Rothman10}'s approach that
  regularizes the Cholesky factor via solving a series of weighted
  lasso problems.
\item {\tt Banding}: \citet{Bickel08band}'s $T*\S$ estimator
  with $T_{jk}=1_{\{|j-k|\le K\}}$.
\end{itemize}
\noindent Each method has a single tuning parameter that is varied to
its best-case value.  In the case of {\tt Banding}, this makes it
equivalent to an oracle-like procedure in which all non-signal
elements are set to zero and all signal elements are left unshrunken.

\noindent While we focus on estimation error as a means of comparison, it
should be noted that these methods could be evaluated in other ways as well.  For example,
{\tt Banding} and {\tt Block Thresh} frequently lead to estimators
that are not positive semidefinite.  This means that these covariance estimates cannot be
directly used in other procedures requiring a covariance matrix (see,
e.g., Section \ref{sec:data} for an example where not being positive
definite would be problematic).  Another consideration is computation
time, in which regard {\tt Nested Lasso} suffers.

We simulate three scenarios for the basis of our comparison:
  \begin{itemize}
  \item {\tt MA(5)}: with $p=200$, $n=100$.  See Section
    \ref{sec:conv-rate-depend-K} for description.
  \item {\tt CY}: A setting used in \citet{Cai12} in which $\true$ is only
    approximately banded:
    $\true_{ij}=0.6|i-j|^{-2}U_{ij}$, where $U_{ij}=U_{ji}\sim
    \text{Uniform}(0,1)$.  We take $p=200$, $n=100$.
  \item {\tt Increasing operator norm}: We produce a sequence of
    $\true$ having increasing operator norm.  In particular, we take
    $\true_{ij}=0.81\{|i-j|\le K\}+c1\{i=j\}$, where $c$ is chosen so
    that $\lambda_{\min}(\true)=0.1$. We take $p=400, n=100$ and vary
    $K$ from 2 to 200.
  \end{itemize}
We can make several observations from this comparison.  We find that
the hierarchical group lasso methods outperform the regular group
  lasso.  This is to be expected since in our simulation scenarios,
  the true covariance matrix is banded (or approximately banded).
As noted before, since we report the best performance of each method over all
  values of the tuning parameter, {\tt Banding} is effectively an
  ``oracle'' method in that the banding is being performed exactly
  where it should be for {\tt MA(5)}.  So it is no
  surpise that this method does so well in this scenario.  However,
  in the approximately banded scenario it suffers, likely because it
  does not do as well as methods that do shrink the nonzero values.
We find that the {\tt Block Thresh} does not do exceptionally well in
any of these scenarios (though it should be noted that it does still
substantially improve on the sample covariance matrix).
Finally, we find that the {\tt Nested Lasso} performs very well
(although we find it to be computationally much slower than the rest).

Figure \ref{fig:divop} shows the results for the third scenario, in
which we consider a sequence of $\true$'s having increasing $K$ and
therefore increasing operator norm.  In terms of Frobenius norm, we
find that {\tt Hier Band} does best.  {\tt Banding} performs very well in
terms of operator norm, which makes sense since $\true$ is a banded
matrix and so simple banding with the correct bandwidth is essentially
an oracle procedure.  It is worth noting that {\tt Block Thesh}
does not do too poorly in terms of operator norm for a wide range of $K$
in contrast to its poor performance in the first two scenarios.

\begin{figure}
  \centering
  \includegraphics[width=0.325\linewidth]{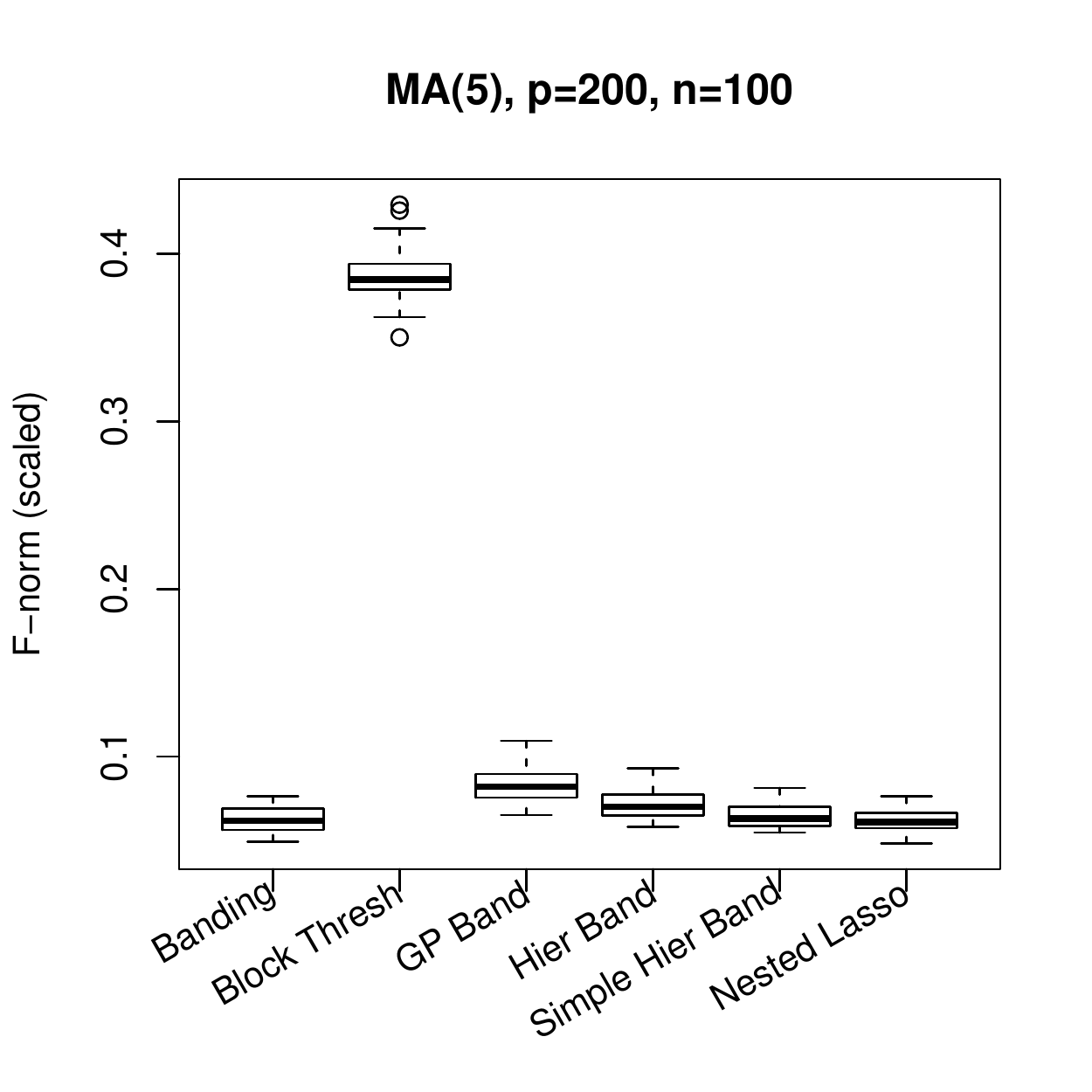}
  \includegraphics[width=0.325\linewidth]{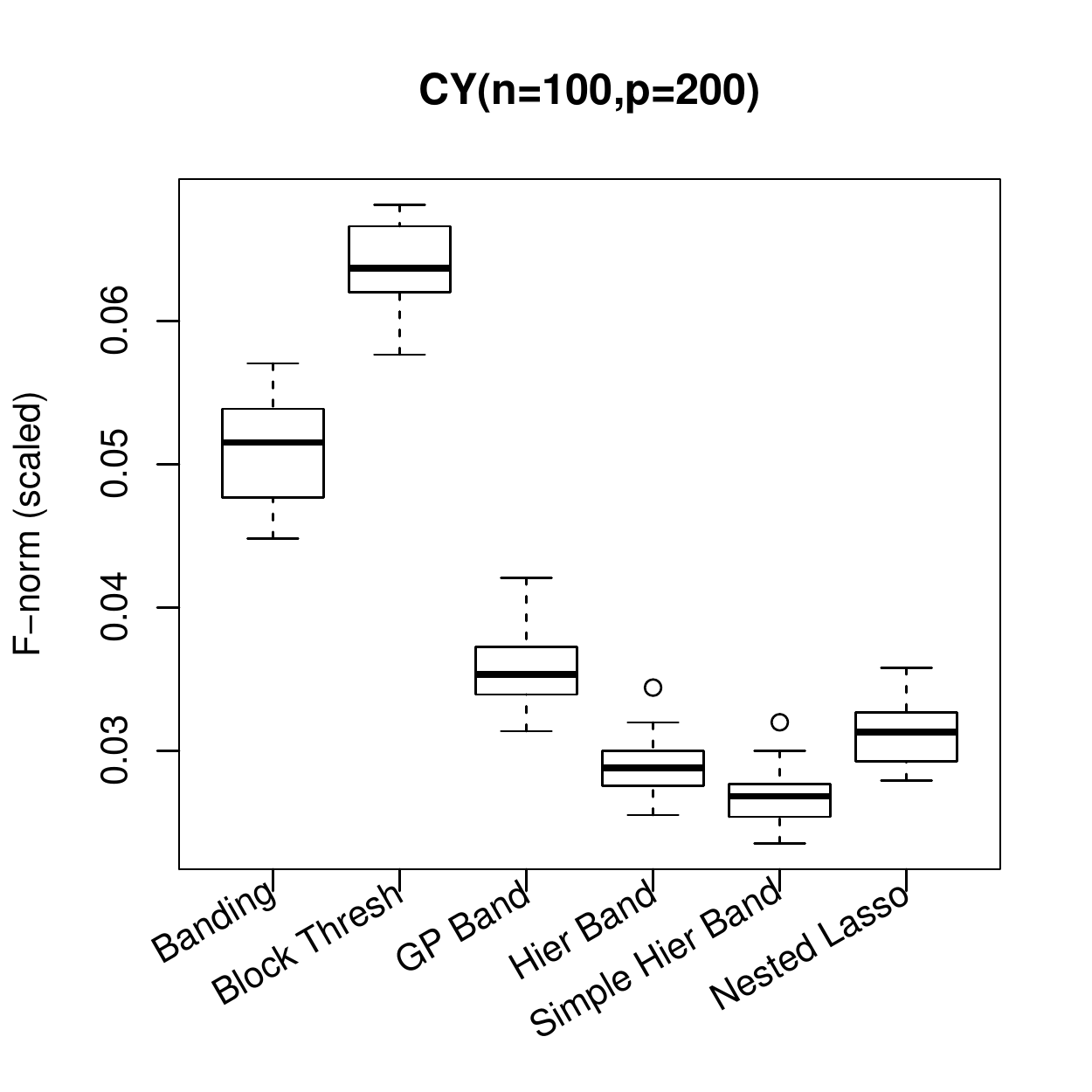}\\
  \includegraphics[width=0.325\linewidth]{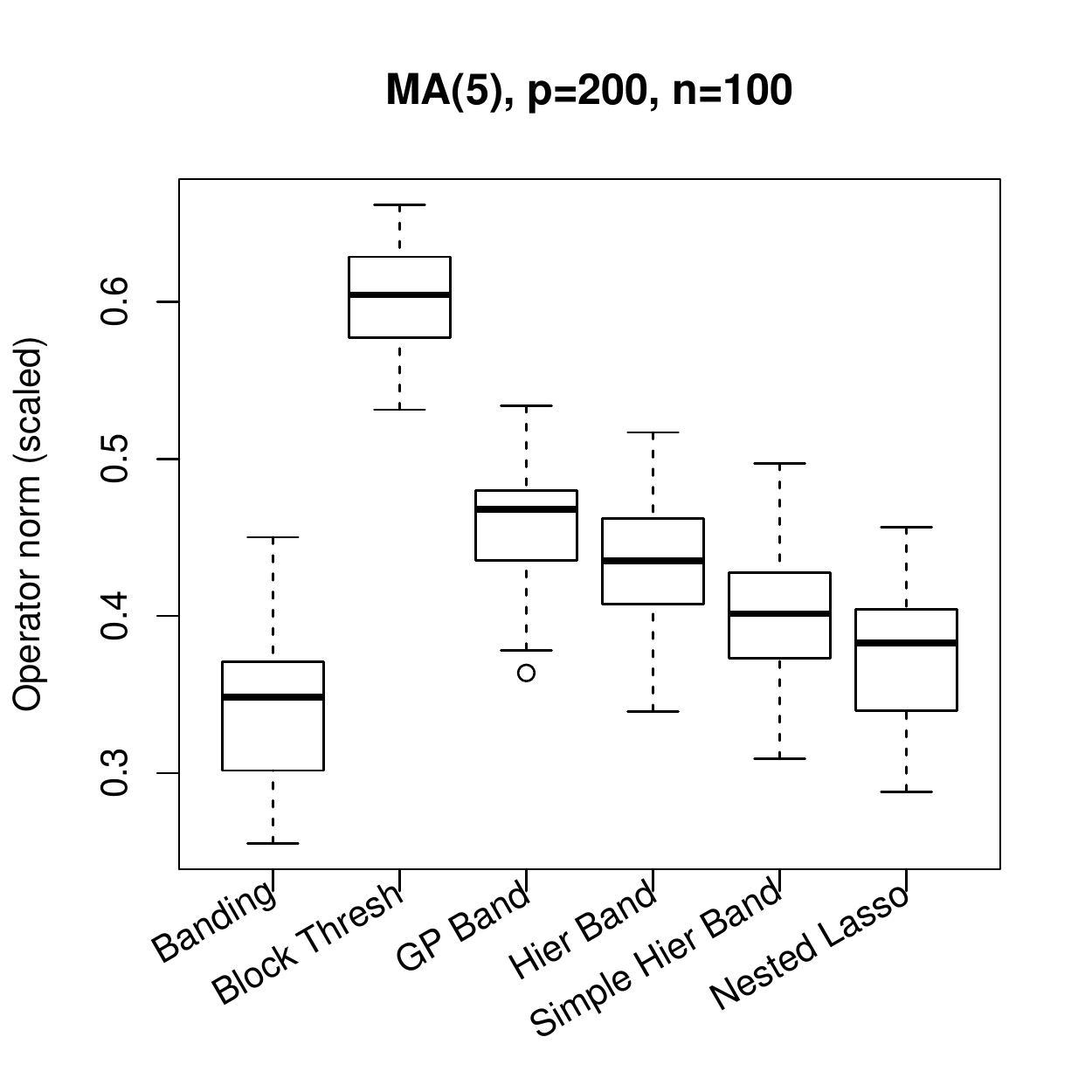}
  \includegraphics[width=0.325\linewidth]{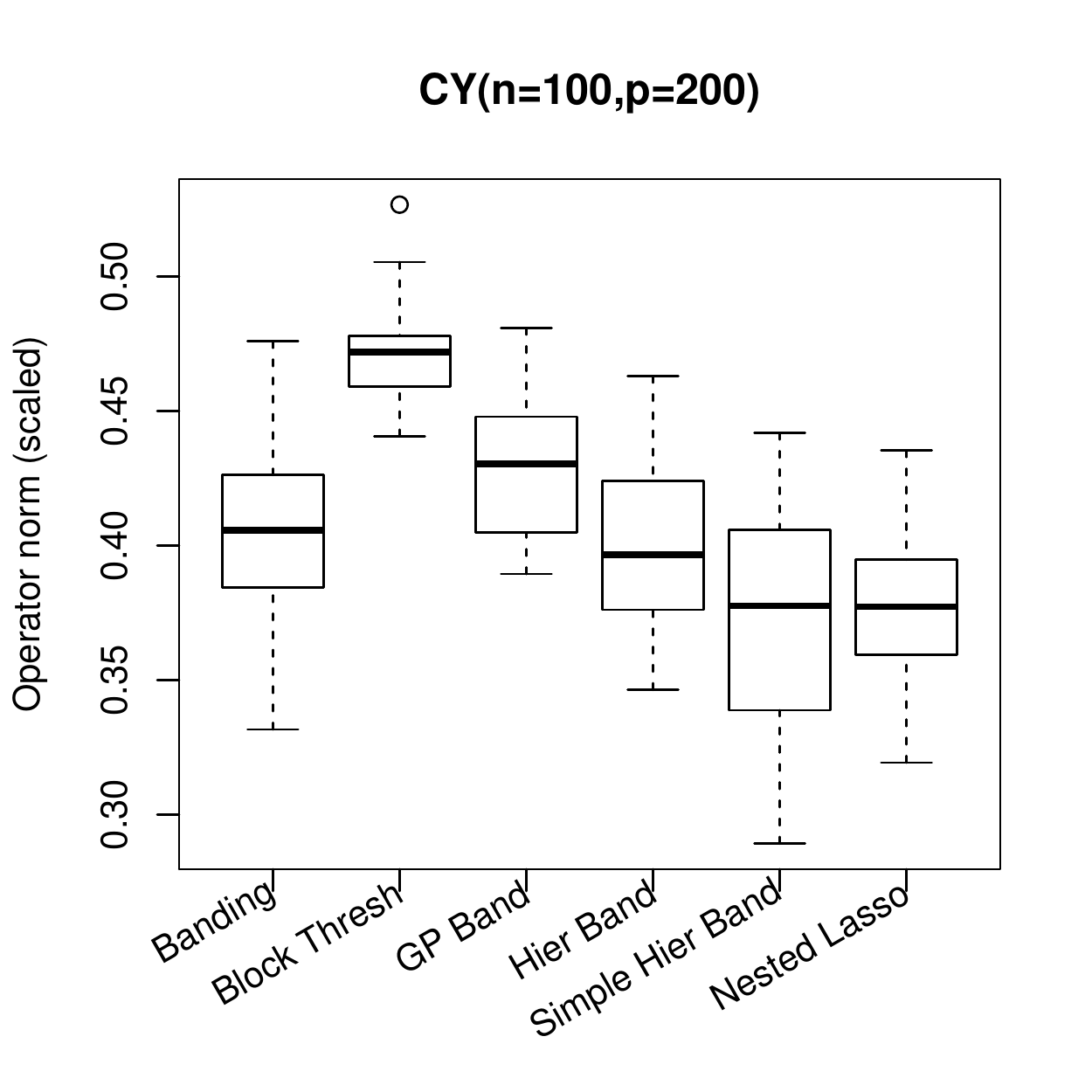}
  \caption{\em Comparison of methods in two settings in terms of
    (Top) Frobenius norm, relative to $\|\S-\true\|_F^2$ and
    (Bottom) operator norm relative to $\|S-\true\|_{op}$.}
\label{fig:compare}
\end{figure}
\begin{figure}
  \centering
  \includegraphics[width=0.45\linewidth]{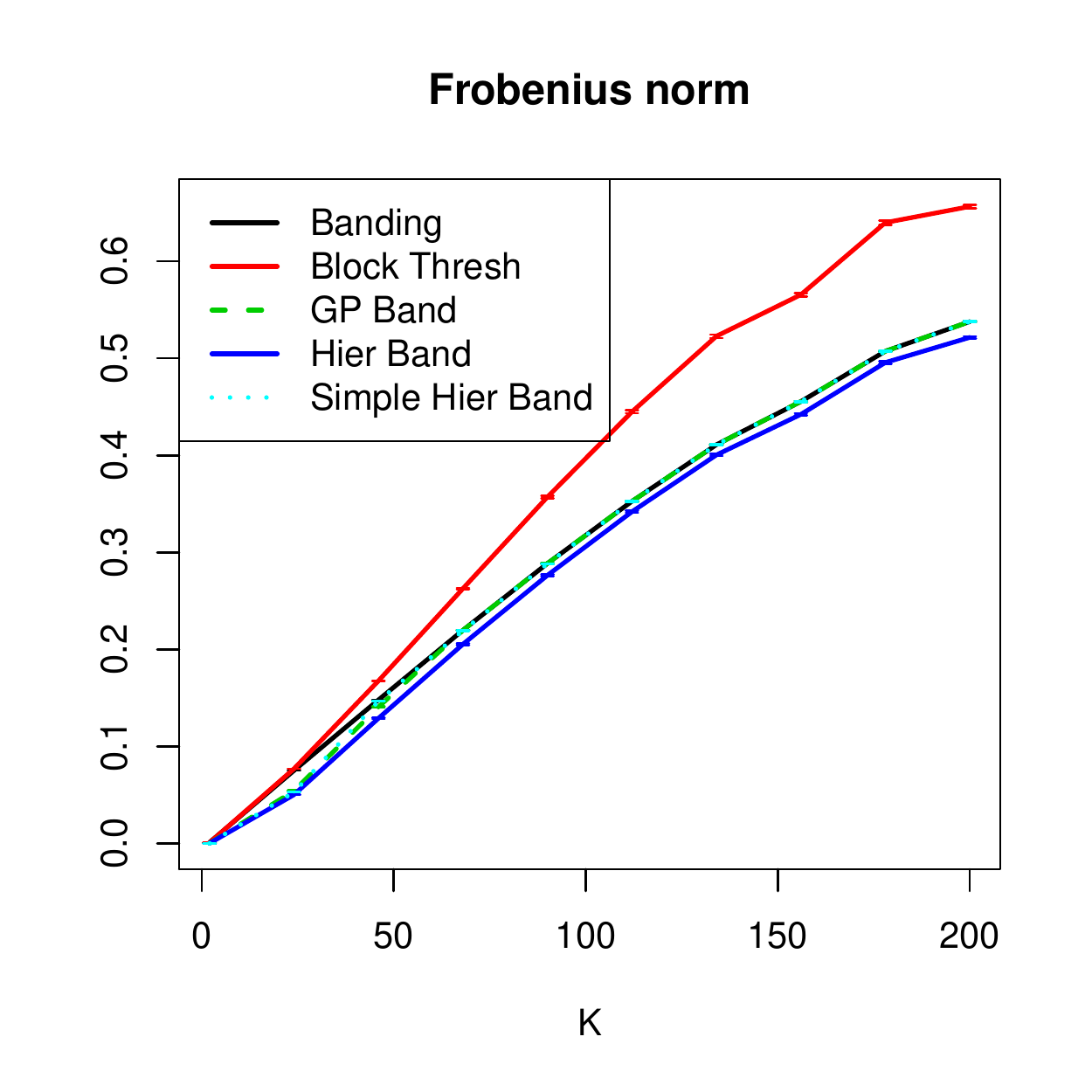}
  \includegraphics[width=0.45\linewidth]{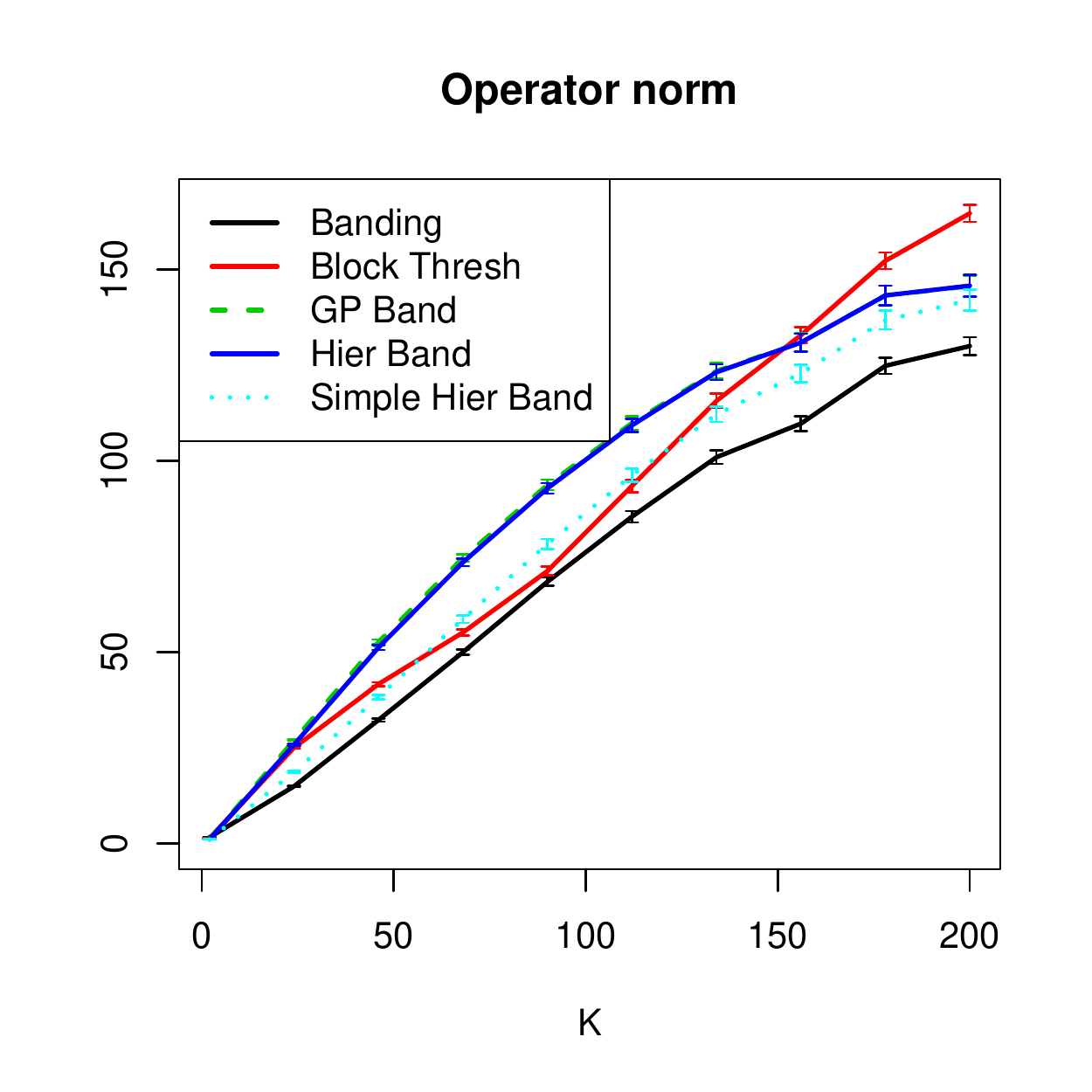}
  \caption{\em Comparison of methods in a setting where
    $\|\true\|_{op}$ is increasing. (Left) Frobenius norm
    and (Right) operator norm.}
\label{fig:divop}
\end{figure}

\subsection{An application to discriminant analysis of phoneme data}
\label{sec:data}
In this section, we develop an application of our banded estimator.  In
general, one would expect any procedure that
requires an estimate of the covariance matrix to benefit from
convex banding when the true covariance matrix is banded (or
approximately banded).  Consider the classification setting in which
we observe $n$ i.i.d. pairs, $(x_i,y_i)$, where $x_i\in\real^p$ is a vector
of predictors and $y_i$ labels the class of the $i$th point.  In quadratic discriminant analysis (QDA), one
assumes that $x_i|y_i=k\sim N_p(\mu^{*(k)},{\true}^{(k)})$.  The QDA
classification rule is to assign $x\in\real^p$ to the class $k$
maximizing $\hat{\mathbb P}(y=k|x)$.  Here $\hat{\mathbb P}$ denotes the estimate of
$\mathbb P(y=k|x)$ given by replacing the parameters $\mu^{*(k)}$,
${\true}^{(k)}$, and $\mathbb P(y_i=k)$ with their maximum likelihood
estimates.

To demonstrate the use of our covariance estimate in QDA, we consider
a binary classification problem described in \citet{ESL}.  The dataset
consists of short sound recordings of male voices saying one of two
similar sounding phonemes, and the goal is to build a classifier for
automatic labeling of the sounds.  The predictor vectors, $x$, are
log-periodograms, representing the (log) intensity of the recordings
across $p=256$ frequencies.  Because the predictors are naturally
ordered, one might expect a regularized estimate of the within-class
covariance matrix to be appropriate, and inspection of the sample
covariance matrices (Figure \ref{fig:phoneme-covs}) supports this.
\begin{figure}
  \centering
 \includegraphics[width=0.35\linewidth]{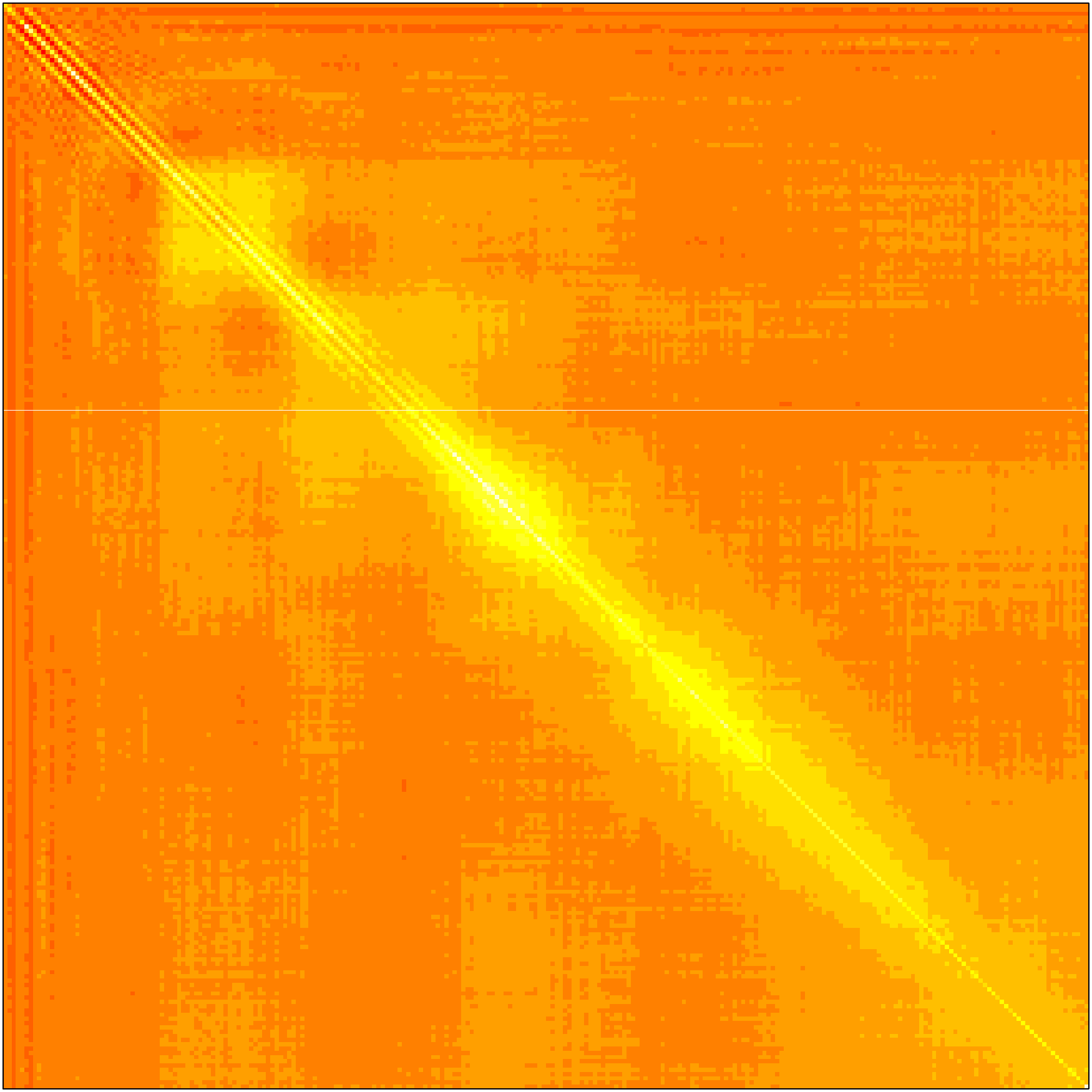}
 \includegraphics[width=0.35\linewidth]{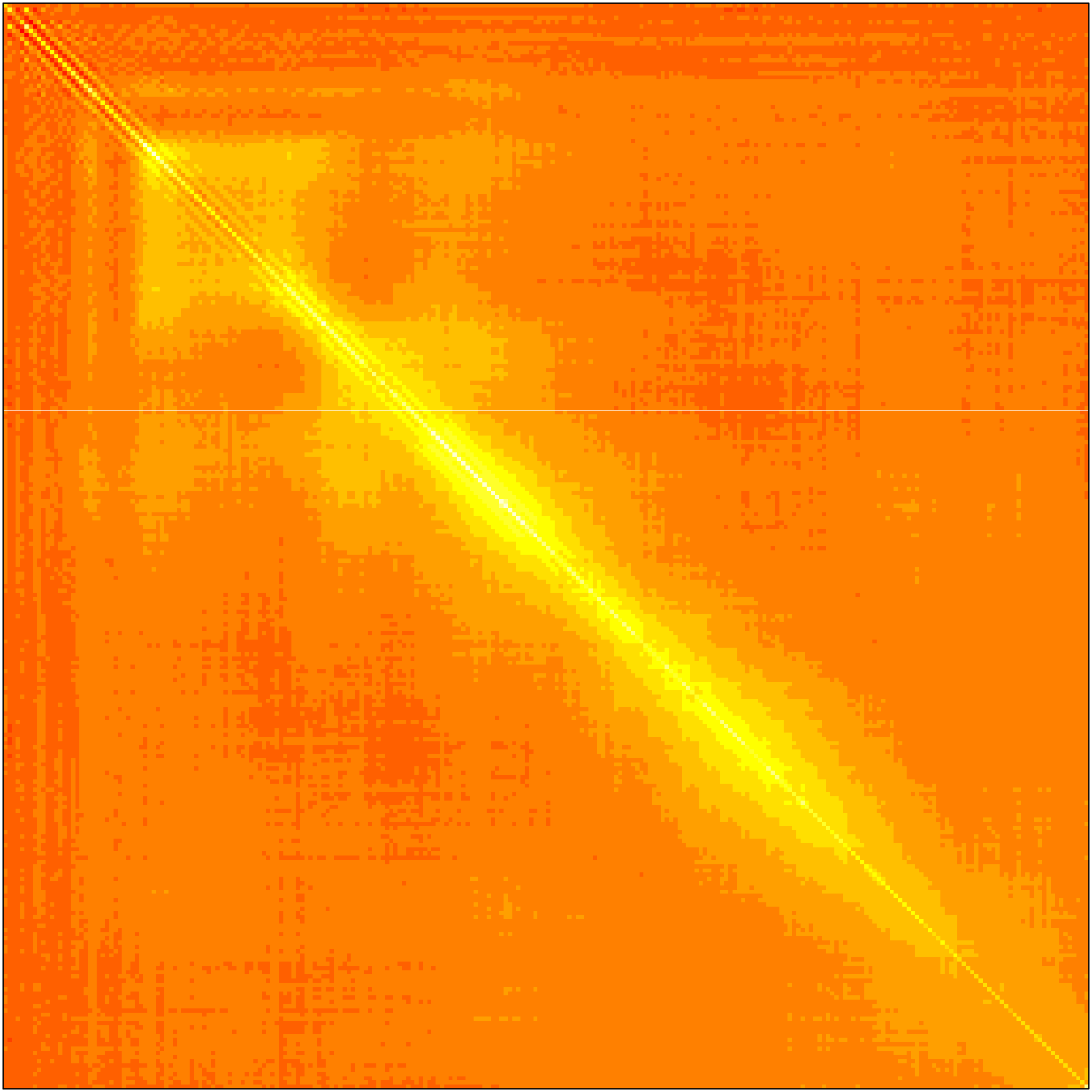}
  \caption{\em Within-class sample covariance matrices for phoneme data}
  \label{fig:phoneme-covs}
\end{figure}
There are $n_1=695$ and $n_2=1022$
phonemes in the two classes.  We randomly split the data into equal
sized training and test sets.  On the training set, five-fold cross
validation is performed to select tuning parameters
$(\lambda_1,\lambda_2)$ and then the convex banding estimates,
$\hatP^{(1)}_{\lambda_1}$ and $\hatP^{(2)}_{\lambda_2}$, are
used in place of the sample covariances to form the basis of the prediction rule,
$\hat{\mathbb P}(y=k|x)$.  The first two boxplots of Figure \ref{fig:phonemes}
show that QDA can be substantially improved by using the regularized
estimate.

Linear discriminant analysis (LDA) is like QDA but assumes a common
covariance matrix between classes, ${\true}^{(k)}=\true$, which is typically estimated as
$$
\S_w=\frac1{n_1+n_2-2}\left[(n_1-1)\S^{(1)}+(n_2-1)\S^{(2)}\right],
$$
where $\S^{(k)}=(n_k-1)^{-1}\sum_{i=1}^{n_k}(x_i-\bar x^{(k)})
(x_i-\bar x^{(k)})^T$.  Figure \ref{fig:phonemes}
shows that LDA does better than the QDA methods, as expected in this
regime of $p$ and $n$ by, e.g., \citet{Cheng04}, and can itself be improved by
a regularized version of $\S_w$, in which we replace
$\S^{(k)}$ in the above expression with $\hatP^{(k)}_{\lambda_k}$ (again, cross
validation is performed to select the tuning parameters).
\begin{figure}
  \centering
  \includegraphics[width=0.5\linewidth]{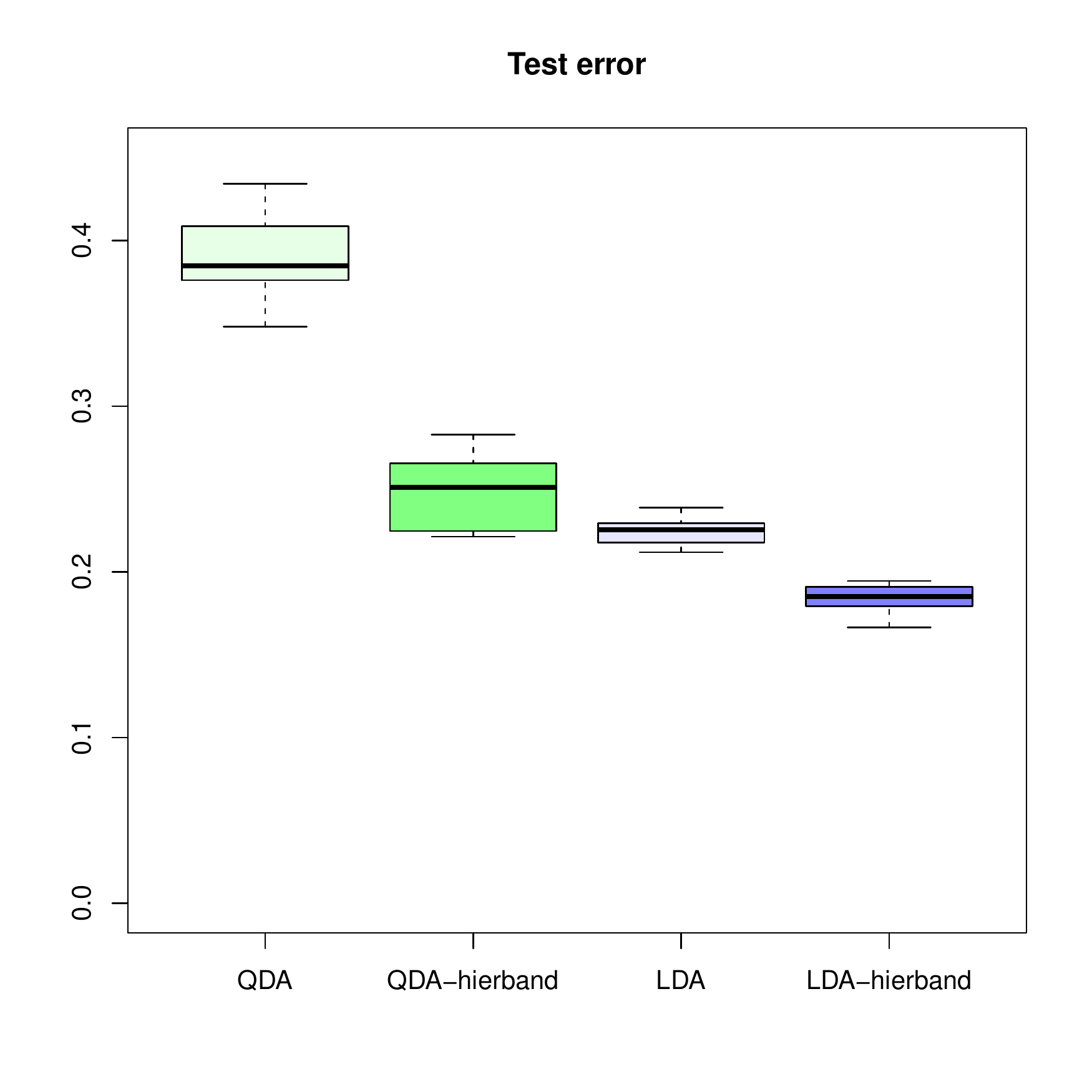}
  \caption{\em Test set misclassification error rates of discriminant analysis of phoneme data.  Convex banding of the within-class covariance matrices improves both QDA and LDA.}
  \label{fig:phonemes}
\end{figure}

\section{Conclusion}
\label{sec:conclusion}

We have introduced a new kind of banding estimator of the covariance matrix that has
strong practical and theoretical performance.  Formulating our
estimator through a convex optimization problem admits precise
theoretical results and efficient computational algorithms.  We prove
that our estimator is minimax rate adaptive  in Frobenius norm over the class of semi-banded matrices
introduced in Section \ref{sec:F-norm}  and minimax rate adaptive  in operator norm over 
the class of banded matrices, up to multiplicative logarithmic factors.  Both classes of matrices over which optimality is established are 
allowed to have bandwidth growing with $n$, $p$ or both. The construction of adaptive estimators over these classes, with established 
rate optimality is, to the best of our knowledge, new.  Moreover,  the proposed estimators recover  the true bandwidth with high probability, for truly banded matrices, 
under minimal signal strength conditions.    In contrast to existing
estimators, our proposed estimator, which enjoys all the  above theoretical properties,  is guaranteed to be both exactly banded,  and positive
definite, with high probability.  We also propose a version of the estimator, that is guaranteed to be positive definite in finite samples. 
Simulation studies show that the hierarchically banded estimator is a strong and fast competitor of  the existing estimators. 
Through a data example,  we demonstrate how our estimator can
be effectively used to improve classification error of both quadratic
and linear discriminant analysis.  Indeed, we expect that many common
statistical procedures that require a covariance matrix may be
improved by using our estimator when the assumption of semi-bandedness
is appropriate.  An {\tt R} \citep{R} package, named {\tt hierband}, will be
made available, implementing our estimator.

\section{Acknowledgments}
\label{sec:acknowledgments}

We thank Adam Rothman for providing {\tt R} code for the Nested Lasso method.

\appendix
\section{Dual problem}
\label{sec:proof-dual}
Define
$$
L(\Sigma,A^{(1)},\ldots, A^{(p-1)})=\half\|\S-\Sigma\|_F^2+\lambda\langle\sum_{\ell=1}^{p-1}
W_\ell*\Al,\Sigma\rangle.
$$
Observe that
$$
\|(W^{(\ell)}*\Sigma)_{g_\ell}\|_2=\max_{A^{(\ell)}}\left\{\langle W^{(\ell)}*A^{(\ell)},\Sigma\rangle\text{ s.t. }\|A^{(\ell)}_{g_\ell}\|_2\le1,~A^{(\ell)}_{g_\ell^c}=0\right\}.
$$
It follows that
\eqref{eq:primal} is equivalent to 
$$
\min_\Sigma\left\{\max_{\Al}\left[L(\Sigma,A^{(1)},\ldots, A^{(p-1)})\text{ s.t. }\|\Al_{\gl}\|_2\le 1,\Al_{\gl^c}=0\right]\right\}.
$$
We get the dual problem by interchanging the $\min$ and $\max$.  The
inner minimization gives the primal-dual relation given in the theorem
(by strong duality) 
and the following dual function:
$$
\min_\Sigma L(\Sigma,A^{(1)},\ldots, A^{(p-1)})=-\half
    \|\S-\lambda\sum_{\ell=1}^{p-1}
      W^{(\ell)}*\Al\|_F^2+\half\|\S\|_F^2.
$$

\section{Ellipsoid projection}
\label{sec:ellipsoid-projection}
To update $\hAl$ in Algorithm \ref{alg:psd}, we must solve a problem
of the form
$$
\hAl_{\gl}=\arg\min_{a\in\real^{|\gl|}}\|\hat R^{(\ell)}_{\gl}-\lambda
D^{(\ell)} a\|_2^2\text{ s.t. }\|a\|_2^2\le 1,
$$
which (in a change of coordinates) is the projection of a point onto
an ellipsoid.  Clearly, if $\|{D^{(\ell)}}^{-1}\hat R^{(\ell)}_{\gl}\|\le \lambda$, then $\hAl_{\gl}={D^{(\ell)}}^{-1}\hat
R^{(\ell)}_{\gl}/\lambda$.  Otherwise, we use the method of Lagrange
multipliers (to solve the problem with an equality constraint):
$$
\mathcal L(a,\nu)=\|\hat R^{(\ell)}_{\gl}-\lambda
D^{(\ell)} a\|_2^2+\nu \lambda^2 (\|a\|_2^2-1)
$$
whence
$$
0=2\lambda D^{(\ell)}(\lambda
D^{(\ell)}\hat a-\hat R^{(\ell)}_{g_\ell})+2\nu \lambda^2 a\implies \hat
a=(D^{(\ell)2}+\nu I)^{-1}\lambda^{-1} D^{(\ell)}\hat R^{(\ell)}_{\gl}.
$$
That is,
$$
\hat A_{s_m}^{(\ell)}=\frac{w_{\ell m}}{\lambda(w_{\ell m}^2+\hat\nu_\ell)}\hat
  R^{(\ell)}_{s_m}
$$
where $\hat\nu_\ell$ is such that $\hAl_\gl$ has unit norm, i.e. such
that $h_\ell(\hat\nu_\ell)=\lambda^2$ where $h_\ell$ is defined in \eqref{eq:nu}.
We compute this root numerically.  We can get limits within which
$\hat\nu_\ell$ must lie.  For example, replacing $w_{\ell m}$ by
$w_\ell=\max_m w_{\ell m}$ makes the RHS smaller whereas
replacing it by 0 makes the RHS larger:

$$
\frac{\sum_{m=1}^\ell w_{\ell m}^2 \|\hat R^{(\ell)}_{s_m}\|^2}{(w_{\ell
      }^2+\nu)^2} =:h_\ell^L(\nu)\le h_\ell(\nu) \le
    h_\ell^U(\nu):=\frac{\sum_{m=1}^\ell w_{\ell m}^2 \|\hat R^{(\ell)}_{s_m}\|^2}{(0+\nu)^2}.
$$
Now, since $h_\ell(\nu)$ is a decreasing function, we know that 
$\hat\nu_\ell^L\le\hat\nu_\ell\le\hat\nu_\ell^U$, where
$h_\ell^L(\hat\nu_\ell^L)=\lambda^2=h_\ell^U(\hat\nu_\ell^U)$.  From
this, it follows that
$$
\left[\sqrt{\sum_{m=1}^\ell w_{\ell m}^2 \|\hat
R^{(\ell)}_{s_m}\|^2}-\lambda w_\ell^2\right]_+\le \lambda\hat\nu_\ell\le \sqrt{\sum_{m=1}^\ell w_{\ell m}^2 \|\hat
R^{(\ell)}_{s_m}\|^2}.
$$
Noting that $\sum_{m=1}^\ell w_{\ell m}^2 \|\hat
R^{(\ell)}_{s_m}\|^2=\|D^{(\ell)}\hat R_\gl^{(\ell)}\|^2_2$, this
simplifies to
$$
\|D^{(\ell)}\hat R_\gl^{(\ell)}\|_2-\lambda w_\ell^2\le \lambda\hat\nu_\ell\le \|D^{(\ell)}\hat R_\gl^{(\ell)}\|_2.
$$
To summarize, we have for $1\le m\le\ell\le p-1$,
$$
\hat A_{s_m}^{(\ell)}=
\begin{cases}
  \hat R_{s_m}^{(\ell)}/(\lambda w_{\ell m}) & \text{ if
  }\|{D^{(\ell)}}^{-1}\hat R_{g_\ell}^{(\ell)}\|_2\le\lambda\\
\frac{w_{\ell m}}{\lambda(w_{\ell m}^2+\hat\nu_\ell)}\hat
  R^{(\ell)}_{s_m}&\text{ otherwise}
\end{cases}
$$
and $\hat A_{g_\ell^c}^{(\ell)}=0$.  This can be written more simply
as $\frac{w_{\ell m}}{\lambda(w_{\ell m}^2+[\hat\nu_\ell]_+)}\hat
  R^{(\ell)}_{s_m}$ if we note that $\|{D^{(\ell)}}^{-1}\hat
R_{g_\ell}^{(\ell)}\|_2\le\lambda$ is equivalent to
$h_\ell(0)\le\lambda^2$ and that in this case $\hat\nu_\ell\le0$, since $h_\ell$
is nonincreasing.  It turns out that often we will be able to get $\hat\nu_\ell$
in closed form.  First, $h_1(\nu)=w_{1}^2\|\hat
R^{(1)}_{s_1}\|^2/(w_1^2+\nu)^2$, so $\hat\nu_1=w_1(\|\hat
R^{(1)}_{s_1}\|/\lambda-w_1)$.  Furthermore, for $\ell\ge1$, if
$\hat\nu_\ell\le0$ then for all $m\le\ell$, we have $\hat
R^{(\ell+1)}_{s_m}=0$.  This means that
$h_{\ell+1}(\nu)=\frac{w_{\ell}^2}{(w_{\ell}^2+\nu)^2}\|\hat
R^{(\ell+1)}_{s_{\ell+1}}\|^2$ whence
$$
\hat\nu_{\ell+1}=w_{\ell+1}\left(\|\hat
R^{(\ell+1)}_{s_{\ell+1}}\|/\lambda-w_{\ell+1}\right)
$$

Thus, we only need to perform numerical root-finding when $\hat\nu_\ell$ when $\ell=p-1-\hat K$.

\section{Determining $\lambda_{\max}$}
\label{sec:lambda-max}

We solve our estimator along a decreasing sequence of values of
the tuning parameter:
$\lambda_{\max}=\lambda_1\ge\cdots\ge\lambda_r$.  To get
a full range of sparsity levels, we wish to choose $\lambda_{\max}$ to
be the smallest $\lambda$ for which $\hatP$ is diagonal.  In light
of the discussion at the end of Appendix \ref{sec:ellipsoid-projection}, $\hatP$ is
diagonal precisely when $\hat\nu_\ell\le0$ for $\ell=1,\ldots,p-1$,
which in this case is equivalent to 
$\|\hat R^{(\ell)}_{s_{\ell}}\|_2\le \lambda w_{\ell}$ for
$\ell=1,\ldots,p-1$.  Now, if we start with $\hAl=0$ for all $\ell$ (and recall that $\hat C=0$
throughout since we are solving for $\hatP$), we have $\hat
R^{(\ell)}_{s_{\ell}}=\S_{s_{\ell+1}}$ for all $\ell$.
Therefore, we take
$$
\lambda_{\max}=\max_{\ell}\{\|\S_{s_\ell}\|_2/w_\ell\}.
$$



\section{Positive definiteness}
\label{sec:pd-proofs}

\subsection{Proof of Theorem \ref{thm:psd}}
\begin{proof}
    Let $u$ be the eigenvector of $\hatP$ such that $u^T\hatP u=\lambda_{\min}(\hatP)$.  Then
  \begin{align*}
    \lambda_{\min}(\hatP)=u^T\true u - u^T(\true-\hatP)u\ge \lambda_{\min}(\true)-\|\hatP-\true\|_{op}.
  \end{align*}
Now, by Theorem \ref{thm:opnormexact2}, $\|\hatP-\true\|_{op}\le C'K\sqrt{\frac{\log p}{n}}$, so the assumption on
$\lambda_{\min}(\true)$ ensures that, whp, $\hatP\succeq\delta
I_p$.  Thus, the constraint in \eqref{eq:primal} may be dropped
without changing the solution, meaning $\hatP=\tildeP$.
\end{proof}


\subsection{Algorithm for $\tildeP$ and its derivation}
\label{sec:deriv-algor-tilde}

\begin{theorem}
A dual of \eqref{eq:tilde} is given by
  \begin{align}
  &\mathrm{Minimize}_{\Al,C\in\real^{p\times p}}\half
    \left\|\S-\lambda\sum_{\ell=1}^{p-1}
      W^{(\ell)}*\Al+\lambda C\right\|_F^2-\lambda\delta\cdot\tr(C) \label{eq:tilde-dual}\\
&\qquad\st C\succeq 0,~\|\Al_{\gl}\|_2\le 1,~\Al_{\gl^c}=0\mathrm{~for~}1\le\ell\le p-1.\nonumber
\end{align}
In particular, given a solution to the dual, $(\hat A^{(1)},\ldots,\hat
A^{(p-1)},\hat C)$, the solution to \eqref{eq:tilde} is given by
\begin{align}
  \tildeP=\S-\lambda\sum_{\ell=1}^{p-1}
  W^{(\ell)}*\hAl+\lambda\hat C.\label{eq:tilde-primal-dual}
\end{align}
\label{thm:tilde-dual}
\end{theorem}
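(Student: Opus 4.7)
The plan is to mimic the derivation in Appendix \ref{sec:proof-dual}, adding a Lagrange multiplier to handle the constraint $\Sigma\succeq\delta I_p$. The only new ingredient beyond the proof of Theorem \ref{thm:dual} is the semidefinite cone constraint, which is absorbed into a PSD Lagrange multiplier.

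First, I would rewrite $\Sigma\succeq\delta I_p$ as $\Sigma-\delta I_p\succeq 0$ and, for notational convenience, parameterize the PSD Lagrange multiplier as $\lambda C$ with $C\succeq 0$ (this is legitimate since $\lambda>0$). Combining this with the variational identity
$$\|(W^{(\ell)}*\Sigma)_{g_\ell}\|_2=\max_{A^{(\ell)}}\left\{\langle W^{(\ell)}*A^{(\ell)},\Sigma\rangle \st \|\Al_{\gl}\|_2\le 1,~\Al_{\gl^c}=0\right\}$$
used in the proof of Theorem \ref{thm:dual}, I would define the Lagrangian
$$\widetilde L(\Sigma,\{\Al\},C)=\half\|\S-\Sigma\|_F^2+\lambda\left\langle\sum_{\ell=1}^{p-1}W^{(\ell)}*\Al,\Sigma\right\rangle-\lambda\langle C,\Sigma\rangle+\lambda\delta\cdot\tr(C).$$
Maximizing over $\{\Al\}$ subject to $\|\Al_{\gl}\|_2\le 1$, $\Al_{\gl^c}=0$ recovers the penalty $\lambda\|\Sigma\|_{2,1}^*$, and maximizing over $C\succeq 0$ yields $+\infty$ unless $\Sigma\succeq\delta I_p$ (the maximum then being attained at $C=0$). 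Hence \eqref{eq:tilde} is equivalent to the saddle problem $\min_\Sigma\max_{\{\Al\},C\succeq 0}\widetilde L$.

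Next I would swap min and max. Strong duality holds because the primal is convex with a strictly feasible point (take $\Sigma=(\delta+1)I_p$, so Slater's condition is satisfied), invoking the standard convex-analytic result; this is really the only non-mechanical step in the argument, but it is routine. After the swap, the inner minimization over $\Sigma\in\real^{p\times p}$ is an unconstrained quadratic with gradient $\Sigma-\S+\lambda\sum_\ell W^{(\ell)}*\Al-\lambda C=0$, yielding the primal-dual relation
$$\tildeP=\S-\lambda\sum_{\ell=1}^{p-1}W^{(\ell)}*\hAl+\lambda\hat C,$$
exactly \eqref{eq:tilde-primal-dual}.

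Finally, I would substitute this minimizer back into $\widetilde L$. A short calculation (identical in form to the one in Appendix \ref{sec:proof-dual}) gives
$$\min_\Sigma\widetilde L=-\half\left\|\S-\lambda\sum_{\ell=1}^{p-1}W^{(\ell)}*\Al+\lambda C\right\|_F^2+\half\|\S\|_F^2+\lambda\delta\cdot\tr(C).$$
Dropping the constant $\half\|\S\|_F^2$ and flipping signs to turn the outer maximization into a minimization yields precisely \eqref{eq:tilde-dual}, completing the derivation. The main potential obstacle, as noted, is verifying strong duality; beyond that the argument is a direct extension of the proof of Theorem \ref{thm:dual}.
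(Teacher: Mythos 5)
Your proof is correct and follows essentially the same route as the paper's: form the Lagrangian with a PSD multiplier $\lambda C$ for the cone constraint (your $-\lambda\langle C,\Sigma\rangle+\lambda\delta\tr(C)$ is identical to the paper's $-\lambda\langle C,\Sigma-\delta I_p\rangle$), use the variational characterization of $\|(W^{(\ell)}*\Sigma)_{g_\ell}\|_2$, swap min and max, minimize the resulting quadratic in $\Sigma$ to get the primal--dual relation, and substitute back. The only difference is that you explicitly justify the min--max interchange via Slater's condition (with the strictly feasible point $\Sigma=(\delta+1)I_p$), a detail the paper omits; this is a small improvement in rigor, not a different approach.
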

\begin{proof}
Define
$$
L(\Sigma,A^{(1)},\ldots, A^{(p-1)},C)=\half\|\S-\Sigma\|_F^2+\lambda\langle\sum_{\ell=1}^{p-1}
W_\ell*\Al,\Sigma\rangle-\lambda\langle C,\Sigma-\delta I_p\rangle.
$$
Observe that
$$
1_\infty\{\Sigma\succeq\delta I_p\}=\max_{C\succeq0}-\langle \Sigma-\delta I_p,C\rangle
$$
and (as before) that
$$
\|(W^{(\ell)}*\Sigma)_{g_\ell}\|_2=\max_{A^{(\ell)}}\left\{\langle W^{(\ell)}*A^{(\ell)},\Sigma\rangle\text{ s.t. }\|A^{(\ell)}_{g_\ell}\|_2\le1,~A^{(\ell)}_{g_\ell^c}=0\right\}.
$$
It follows that
\eqref{eq:primal} is equivalent to 
$$
\min_\Sigma\left\{\max_{\Al,C}\left[L(\Sigma,A^{(1)},\ldots, A^{(p-1)},C)\text{ s.t. }\|\Al_{\gl}\|_2\le 1,\Al_{\gl^c}=0, C\succeq 0\right]\right\}.
$$
We get the dual problem by interchanging the $\min$ and $\max$.  The
inner minimization gives the primal-dual relation given in the theorem
and the following dual function:
$$
\min_\Sigma L(\Sigma,A^{(1)},\ldots, A^{(p-1)},C)=-\half
    \|\S-\lambda\sum_{\ell=1}^{p-1}
      W^{(\ell)}*\Al+\lambda C\|_F^2+\half\|\S\|_F^2+\lambda\delta\cdot\tr(C).
$$
\end{proof}

A BCD algorithm for solving \eqref{eq:tilde-dual} is given in Algorithm \ref{alg:psd}.
The update over $C$ involves projecting a matrix onto the positive
semidefinite cone.  The other details are similar to those explained
in Section \ref{sec:computation}.

\begin{algorithm}[t]
\caption{BCD on dual of Problem \eqref{eq:tilde}.}
\emph{Inputs:} $\S,~\delta,~\lambda$, and weights matrices,
$W^{(\ell)}$. Initialize $A^{(\ell)},C$.\\
Repeat until convergence:

\begin{itemize}
\item $\{\hat A^{(\ell)}\}\leftarrow${\tt
    threshold\_subdiagonals}$\left(\S+\lambda \hat C,\{\hat A^{(\ell)}\},\lambda,\{w_{\ell m}\}\right)$
\item Let $UDU^T=\lambda\sum_{\ell=1}^{p-1}
  W^{(\ell)}*\hAl-\S$ be the eigenvalue decomposition.  Then,
$$
\lambda\hat C\leftarrow U[D+\delta I_p]_+U^T
$$
where the positive part, $[\cdot]_+$, is applied to each diagonal
element.
\end{itemize}

{\bf Subroutine} {\tt threshold\_subdiagonals}$\left(R,\{\hat A^{(\ell)}\},\lambda,\{w_{\ell m}\}\right)$
For $\ell=1,\ldots, p-1$:

  \begin{itemize}
  \item Compute $\hat R^{(\ell)}\leftarrow
    R-\lambda\sum_{\ell=1}^{p-1} W^{(\ell)}*\hAl$
  \item For $m\le\ell$, set $\hat A_{s_m}^{(\ell)}\leftarrow \frac{w_{\ell m}}{\lambda(w_{\ell m}^2+\max\{\hat\nu_\ell,0\})}\hat
      R^{(\ell)}_{s_m}$ where $\hat\nu_\ell$ satisfies
      $\lambda^2=h_\ell(\hat\nu_\ell)$, as in \eqref{eq:nu}.
  \end{itemize}
Return $\{\hat A^{(\ell)}\}$. \label{alg:psd}
\end{algorithm}

\section{Bounds on $\max_{ij}\left|\S_{ij}-\true_{ij}\right|$ }
\label{sec:lem-whp}
\begin{rem}
\textnormal{Lemma 3 in \cite{Bickel08band} is proved under a Gaussianity assumption coupled with the assumption that $\|\true\|_{op}$ is bounded.
Whereas inspection of the proof shows that the latter is not needed, we  cannot  quote this result  directly for other types of design. The 
commonly employed assumptions  of sub-Gaussianity are placed on the entire vector ${\bf X} = (X_1, \ldots, X_p)^T$  and postulate that there exists $\tau > 0$ 
such that 
\[ \mathbb{P}\{ |v^T({\bf X} - \mathbb{E}({\bf X}))| > t \} \leq e^{-t^2/(2\tau)}, \ \mbox{for all} \ t > 0 \ \mbox{and} \  \|v\|_2 = 1,\]
see, for instance, \cite{Cai12breg}.  However, if such a $\tau$ exists, then $\|\true\|_{op} \leq \tau$. Lemma \ref{lem:whp} shows that a bound on  $\|\true\|_{op}$ can be avoided 
in the probability bounds regarding $\max_{ij}\left|\S_{ij}-\true_{ij}\right|$, and the distributional assumption can be weakened to marginals. }
\end{rem} 
\begin{theorem}
\label{thm:whp}Assume $\log p \leq \gamma n$ for some constant $\gamma>0$ and $\max_{ij}|\true_{ij}|\leq M$ for some constant $M$.
Let $D = \max_{i, j}\left |\S_{ij}-\true_{ij}\right|$. 
There exists some constant $c>0$ such that for sufficiently large $x>0$, 
\begin{equation}\label{lemma1}
\mathbb{P}\left(D> x\sqrt{\log (p\vee n)/n}\right) \leq\frac{c}{p\vee n}
\end{equation}
and
\begin{equation}
\mathbb{E} D^2\cdot 1_{\left\{D> x\sqrt{\log (p\vee n)/n}\right\}}\leq \frac{c}{n(n\vee p)}
\end{equation}
\end{theorem}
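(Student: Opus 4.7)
\section*{Proof proposal for Theorem \ref{thm:whp}}

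The plan is to reduce the bound on $D = \max_{i,j}|\S_{ij}-\true_{ij}|$ to concentration for sums of the scalar products $X_{ki}X_{kj}$, which are sub-exponential under Assumption 1. First I write
\begin{equation*}
\S_{ij}-\true_{ij} = \Bigl(n^{-1}\sum_{k=1}^n X_{ki}X_{kj}-\true_{ij}\Bigr) - \bar X_i\bar X_j =: U_{ij}-V_{ij},
\end{equation*}
so that $D \le \max_{ij}|U_{ij}| + \max_{ij}|V_{ij}|$. Handling the two pieces separately is the core of the argument.

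For the main term $U_{ij}$, the marginal sub-Gaussianity assumption, combined with $\max_{ij}|\true_{ij}|\le M$ (which bounds $\true_{ii}$ uniformly), implies that each $X_{ki}$ is sub-Gaussian with a variance proxy uniformly bounded in $i$. Consequently the product $X_{ki}X_{kj}$ is sub-exponential with parameters $(\tau_1,\tau_2)$ independent of $i,j$; this follows from the elementary identity $X_{ki}X_{kj} = \tfrac14[(X_{ki}+X_{kj})^2 - (X_{ki}-X_{kj})^2]$ together with the fact that squares of sub-Gaussians are sub-exponential. Bernstein's inequality for i.i.d.\ centered sub-exponential variables then yields
\begin{equation*}
\mathbb{P}\bigl(|U_{ij}|> t\bigr) \le 2\exp\bigl(-c\, n\min\{t^2,t\}\bigr)
\end{equation*}
for an absolute constant $c>0$. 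Setting $t = x\sqrt{\log(p\vee n)/n}$, using Assumption 2 ($\log p \le \gamma n$) to guarantee $t\le 1$ for sufficiently large $n$ so the quadratic regime is active, and union-bounding over the $p^2$ pairs produces
\begin{equation*}
\mathbb{P}\bigl(\max_{ij}|U_{ij}|> x\sqrt{\log(p\vee n)/n}\bigr) \le 2p^2\exp(-c\,x^2\log(p\vee n)) \le c_1 (p\vee n)^{-2},
\end{equation*}
provided $x$ is chosen large enough that $cx^2\ge 4$.

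For the centering term $V_{ij}=\bar X_i\bar X_j$, sub-Gaussianity of the marginals gives $\mathbb{P}(|\bar X_i|> s)\le 2\exp(-c'ns^2)$ uniformly in $i$. Taking $s=\sqrt{\log(p\vee n)/n}$ and a union bound over $p$ indices yields $\max_i|\bar X_i|\le \sqrt{\log(p\vee n)/n}$ with probability at least $1-c_2(p\vee n)^{-2}$, so $\max_{ij}|V_{ij}|\le \log(p\vee n)/n$, which is of strictly smaller order than the target threshold whenever $\log(p\vee n)\le n$. Combining the two bounds proves (\ref{lemma1}).

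For the second claim, I would use the tail integration identity
\begin{equation*}
\mathbb{E} D^2 \cdot 1_{\{D>t\}} = t^2\,\mathbb{P}(D>t) + \int_{t}^{\infty} 2s\,\mathbb{P}(D>s)\,ds,
\end{equation*}
with $t = x\sqrt{\log(p\vee n)/n}$. The Bernstein-type tail $\mathbb{P}(D>s)\le c_3 p^2\exp(-c\, n\min\{s^2,s\})$ (extended from the single-entry bound above by a union bound) decays at least as fast as $(p\vee n)^{-2}$ at $s=t$ and faster beyond. Splitting the integral at $s=1$ to handle the Gaussian and exponential regimes separately, and bounding $p^2\le (p\vee n)^2$, a direct computation gives both contributions $\lesssim 1/[n(p\vee n)]$, which is the required rate.

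The main technical obstacle is obtaining the sub-exponential parameters for $X_{ki}X_{kj}$ from only the marginal sub-Gaussian assumption (rather than a joint sub-Gaussian assumption on $\mathbf X$), since this avoids implicitly bounding $\|\true\|_{op}$. The polarization identity above, together with the observation that sub-Gaussianity is preserved under linear combinations of two variables (at the cost of an absolute constant in the variance proxy), makes this step work without any assumption on the dependence structure beyond $\true_{ii}\le M$.
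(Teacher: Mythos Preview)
Your proof is correct and follows the same overall architecture as the paper: decompose $\S_{ij}-\true_{ij}$ into the uncentered sum $U_{ij}$ and the centering correction $V_{ij}=\bar X_i\bar X_j$, control $V_{ij}$ via the marginal sub-Gaussian tail of $\bar X_j$, control $U_{ij}$ via a Bernstein-type bound, union-bound over $p^2$ pairs, and finish the expectation claim by integrating the tail.

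The one substantive difference is in how you certify that $X_{ki}X_{kj}$ is sub-exponential using only marginal information. The paper verifies the moment form of Bernstein's condition directly: it bounds $\mathbb{E}|X_{ki}X_{kj}|^q$ by Cauchy--Schwarz, $\mathbb{E}|X_{ki}X_{kj}|^q \le (\mathbb{E}X_{ki}^{2q}\,\mathbb{E}X_{kj}^{2q})^{1/2}$, and then invokes the sub-Gaussian moment growth $\mathbb{E}|X_{kj}/\sqrt{\true_{jj}}|^q \lesssim q^{q/2}$ to produce explicit $\nu_{ij}$ and $c_{ij}$. You instead use polarization, writing $X_{ki}X_{kj}=\tfrac14[(X_{ki}+X_{kj})^2-(X_{ki}-X_{kj})^2]$, and apply Cauchy--Schwarz on the \emph{moment generating functions} to conclude that $X_{ki}\pm X_{kj}$ is sub-Gaussian (hence its square is sub-exponential) without any joint assumption. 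Both routes avoid the implicit bound on $\|\true\|_{op}$; the paper's moment calculation makes the Bernstein constants fully explicit, while your polarization argument is shorter and yields the cleaner two-regime tail $\exp(-cn\min\{t^2,t\})$, which in fact handles the integration in the second claim more transparently than the paper's bound (which is stated only for $t<2M$).

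A small wrinkle: your remark that ``$t\le 1$ so the quadratic regime is active'' is not quite right for large $x$, since $t=x\sqrt{\log(p\vee n)/n}$ need not be below $1$. But this is harmless: in the linear regime $nt = x\sqrt{n\log(p\vee n)} \gtrsim x\log(p\vee n)$ under $\log p\le\gamma n$, so the union bound still gives the required $(p\vee n)^{-1}$ decay for $x$ large.
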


\begin{proof}
By Lemma \ref{lem:whp-0} below, there exist constants $c_i, i=1, 2, 3, 4$ such that, for any $0 <t <2M$,
\[
\mathbb{P}\left( D> t\right) \leq p^2 c_1 e^{-c_2nt^2} + p c_3 e^{-c_4 nt}.
\]
Hence,
\begin{align*}
\mathbb{P}\left(D> x\sqrt{\log (p\vee n)/n}\right) &\leq p^2 c_1 e^{-c_2 x^2 \log(p\vee n)} + pc_3 e^{-c_4 x\log (p\vee n) \sqrt{n/\log  (p\vee n)}}\\
&\leq c_1  (p\vee n)^{2-c_2x^2} + c_3  (p\vee n)^{1-c_4x\sqrt{n/\log  (p\vee n)}}.
\end{align*}
Next we derive that
\begin{align*}
\mathbb{E} &D^2\cdot 1_{\left\{D> x\sqrt{\log  (p\vee n)/n}\right\}}= \int_{x^2\log  (p\vee n)/n}^{\infty} \mathbb{P}(D^2\geq y) \mathrm{d}y\\
&\leq \int_{x^2\log  (p\vee n)/n}^{\infty} \left(p^2 c_1 e^{-c_2nt} + p c_3 e^{-c_4 n\sqrt{t}}\right)\mathrm{d}t\\
&=\frac{p^2 c_1e^{-c_2 nx^2\log  (p\vee n)/n}}{c_2 n} + \frac{pc_3e^{-c_4nx\sqrt{\log  (p\vee n)/n}}}{c_4n}\left(x\sqrt{\log  (p\vee n)/n} + \frac{1}{c_4n}\right)\\
&\leq \frac{c_1  (p\vee n)^{2-c_2x^2}}{c_2n} + \frac{c_3  (p\vee n)^{1-c_4x\sqrt{n/\log  (p\vee n)}}}{c_4n}\left(x\sqrt{\log  (p\vee n)/n} + \frac{1}{c_4n}\right).
\end{align*}
Hence if $x$ is sufficiently large and by the assumption that $\log p \leq \gamma n$, the  inequalities in the  lemma holds for some
constant $c$.
\end{proof}

\begin{lem}
\label{lem:whp-0}
There exist two constants $c_1$ and $c_2$ such that
\[
\mathbb{P}\left(\max_{ij}\left|\S_{ij}-\true_{ij}\right|> t\right)\leq 
2p^2  \exp\left(-\frac{c_2nt^2}{\max_j \true_{jj}}\right) + 8p \exp\left (- \frac{c_1 nt}{\max_j \true_{jj}}\right) .
\]
for any $0<t < 2 \max_j \true_{jj}$.
\end{lem}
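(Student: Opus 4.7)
The approach is a union bound over the $p^{2}$ entries together with a standard two-term decomposition of the deviation. Since $\mathbb{E}\mathbf{X}=\mathbf{0}$,
\[
\S_{ij}-\true_{ij} \;=\; \tfrac{1}{n}\sum_{k=1}^n\bigl(X_{ki}X_{kj}-\true_{ij}\bigr)\;-\;\bar X_i\bar X_j,
\]
so it suffices to bound each piece on the event that it exceeds $t/2$, then union-bound over all $(i,j)$.

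For the averaged (Bernstein) term, Assumption 1 gives $\|X_{kj}\|_{\psi_2}\lesssim\sqrt{\true_{jj}}$, and hence by Cauchy--Schwarz the centred product $X_{ki}X_{kj}-\true_{ij}$ is sub-exponential with $\psi_1$-norm bounded by $K := C\sqrt{\true_{ii}\true_{jj}}\le C\max_j\true_{jj}$. The classical Bernstein inequality for independent centred sub-exponentials then yields, for each fixed $(i,j)$,
\[
\mathbb{P}\!\left(\Bigl|\tfrac{1}{n}\textstyle\sum_{k}(X_{ki}X_{kj}-\true_{ij})\Bigr|>\tfrac{t}{2}\right)\;\le\; 2\exp\!\left(-c\,n\min\!\Bigl\{\tfrac{t^{2}}{K^{2}},\tfrac{t}{K}\Bigr\}\right).
\]
In the prescribed range $t<2\max_j\true_{jj}$ the minimum is realised by the quadratic term; bounding $K^{2}\le CM\max_j\true_{jj}$ (using $\max_j\true_{jj}\le M$) absorbs an extra factor of $\max_j\true_{jj}$ into the constant and produces an upper bound of the form $2\exp(-c_2 nt^{2}/\max_j\true_{jj})$. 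A union bound over the $p^{2}$ pairs yields the first term of the claim.

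For the cross (mean-correction) term, observe the set inclusion $\{\max_{ij}|\bar X_i\bar X_j|>t/2\}\subseteq\{\max_i|\bar X_i|>\sqrt{t/2}\}$. Each $\bar X_i$ is an average of $n$ i.i.d.\ mean-zero sub-Gaussians and is therefore itself sub-Gaussian with proxy $\lesssim\true_{ii}/n$. A standard sub-Gaussian tail bound combined with a union bound over $i\in\{1,\dots,p\}$ gives
\[
\mathbb{P}\!\Bigl(\max_i|\bar X_i|>\sqrt{t/2}\Bigr)\;\le\; 2p\exp\!\bigl(-c_1 nt/\max_j\true_{jj}\bigr),
\]
which, after absorbing numerical constants, is precisely the second term $8p\exp(-c_1 nt/\max_j\true_{jj})$ stated in the lemma.

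The only subtle point is correctly identifying the $\psi_1$-norm of the product $X_{ki}X_{kj}$ in terms of $\max_j\true_{jj}$ (rather than its square) and verifying that the range $t<2\max_j\true_{jj}$ keeps us in Bernstein's quadratic regime; exploiting the boundedness $\max_j\true_{jj}\le M$ is what converts the naturally occurring denominator $(\max_j\true_{jj})^2$ into the claimed $\max_j\true_{jj}$. Once this bookkeeping is done, both pieces follow from textbook concentration inequalities applied to marginal sub-Gaussian random variables.
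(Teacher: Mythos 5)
Your proof is correct in substance and follows essentially the same route as the paper: the same decomposition $\S_{ij}-\true_{ij}=n^{-1}\sum_{k}(X_{ki}X_{kj}-\true_{ij})-\bar X_i\bar X_j$, a union bound over entries split at $t/2$, a Bernstein-type bound for the centred products (the paper verifies the moment conditions of the Boucheron--Lugosi--Massart Bernstein inequality via Vershynin's moment bounds, which is equivalent to your $\psi_1$-norm formulation with $\|X_{ki}X_{kj}\|_{\psi_1}\lesssim\sqrt{\true_{ii}\true_{jj}}$), and a sub-Gaussian tail for $\max_i|\bar X_i|$ after the inclusion $\{|\bar X_i\bar X_j|>t/2\}\subseteq\{|\bar X_i|>\sqrt{t/2}\}\cup\{|\bar X_j|>\sqrt{t/2}\}$. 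One justification is imprecise: the range $t<2\max_j\true_{jj}$ does \emph{not} force Bernstein's quadratic regime for every pair $(i,j)$, because the relevant scale is $K_{ij}\asymp\sqrt{\true_{ii}\true_{jj}}$, which can be much smaller than $\max_j\true_{jj}$. The slip is harmless: when the linear term is the minimum, $t<2\max_j\true_{jj}$ gives $t/K_{ij}\ge t^2/(2\max_j\true_{jj}\,K_{ij})\gtrsim t^2/(M\max_j\true_{jj})$ since $K_{ij}\lesssim\max_j\true_{jj}\le M$, so the same quadratic-form bound results with a smaller constant $c_2$. Your closing remark about absorbing a factor of $\max_j\true_{jj}\le M$ is exactly the right bookkeeping; the paper's own derivation likewise produces a $(\max_j\true_{jj})^2$ denominator at the corresponding step before this absorption.
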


\noindent{\it Proof}: Note that 
\[
\S_{ij} =  n^{-1}\sum_{k=1}^n X_{ki}X_{kj} - \bar X_i \bar X_j.
\]
and
\[
\left|\S_{ij}-\true_{ij}\right| \leq \left|n^{-1}\sum_{k=1}^n X_{ki}X_{kj}-\true_{ij}\right| + \left|\bar X_i \bar X_j\right|.
\]
Hence
\begin{align*}
&\mathbb{P}\left(\max_{ij}\left|\S_{ij}-\true_{ij}\right|> t\right)\\
\leq&\mathbb{P}\left(\max_{ij}\left|n^{-1}\sum_{k=1}^n X_{ki}X_{kj}-\true_{ij}\right|> t/2\right) + \mathbb{P}\left(\max_{ij}\left|\bar X_i \bar X_j\right| > t/2\right)\\
\leq &p^2 \max_{ij} \mathbb{P}\left(\left|n^{-1}\sum_{k=1}^n X_{ki}X_{kj}-\true_{ij}\right|> t/2\right) + 2p \max_{j} \mathbb{P}\left(\left|\bar X_j\right| > \sqrt{t/2}\right).
\end{align*}
Let $I_{ij} = \mathbb{P}\left(\left|n^{-1}\sum_{k=1}^n X_{ki}X_{kj}-\true_{ij}\right|> t/2\right)$ 
and $I_j =  \mathbb{P}\left(\left|\bar X_j\right| > \sqrt{t/2}\right)$. Then
\begin{equation}
\label{eq1}
\mathbb{P}\left(\max_{ij}\left|\S_{ij}-\true_{ij}\right|> t\right)\leq p^2 \max_{ij} I_{ij} + 2p\max_j I_j.
\end{equation}
We first consider $I_j$.  $\bar X_j$ is sub-Gaussian with variance $\true_{jj}/n$ and 
\begin{align*}
\mathbb{E}\exp\left(t\bar X_j/\sqrt{\true_{jj}/n}\right)= &\prod_{k=1}^n \mathbb{E} \exp\left(t X_{kj}/ \sqrt{n \true_{jj}}\right)\\
\leq & \left\{\exp(Ct^2/n)\right\}^n\\
=& \exp(Ct^2).
\end{align*}
By Lemma 5.5 in \cite{Vershynin11}, 
\[
\mathbb{P}\left\{|\bar X_j|/\sqrt{\true_{jj}/n} >t \right\} \leq \exp(1-t^2/K_1^2)
\]
for some constant $K_1$ that does not depend on $j$. It follows that 
\begin{align*}
I_j &=  \mathbb{P}\left(\left|\bar X_j\right| > \sqrt{t/2}\right)=\mathbb{P}\left\{|\bar X_j|/\sqrt{\true_{jj}/n} >\sqrt{tn/(2\true_{jj})} \right\} 
\leq \exp\left(1 - \frac{nt}{2K_1^2\true_{jj}} \right).
\end{align*}
Therefore,
\begin{equation}
\label{eq2}
I_j \leq 4 \exp\left (- \frac{c_1 nt}{\max_j \true_{jj}}\right)
\end{equation}
for some constant $c_1$.

Now we consider $I_{ij}$. We shall find  $\nu_{ij}$ and $c_{ij}$ such that 
\begin{equation}
\label{eq3-1}
\sum_{k=1}^n \mathbb{E} (X_{ki}^2 X_{kj}^2) \leq \nu_{ij}
\end{equation}
and 
\begin{equation}
\label{eq3-2}
\sum_{k=1}^n \mathbb{E} \left\{(X_{ki}X_{kj})^q_{+}\right\} \leq \frac{q!}{2}\cdot \nu_{ij} \cdot c_{ij}^{q-2}
\end{equation}
for all integers $q\geq 3$. Then by Theorem 2.10 and Corollary 2.11 in \cite{Boucheron13}, for any $t> 0$,
\begin{equation}
\label{eq3-3}
\mathbb{P}\left(\left|\sum_{k=1}^n \left(X_{ki}X_{kj}-\true_{ij}\right)\right|> t\right)\leq 2\exp\left\{-\frac{t^2}{2(\nu_{ij}+c_{ij}t)}\right\}.
\end{equation}

To find $\nu_{ij}$ and $c_{ij}$,  note that by Lemma 5.5 in \cite{Vershynin11}, $\mathbb{E}|X_{ij}/\sqrt{\sigma_{jj}}|^q \leq K_2^q q^{q/2}$ 
for all $q\geq 1$ and some constant $K_2$ that does not depend on $j$. Hence
\begin{align*}
\sum_{k=1}^n \mathbb{E} (X_{ki}^2 X_{kj}^2) \leq \sum_{k=1}^n \sqrt{\mathbb{E} X_{ki}^4\cdot \mathbb{E}X_{kj}^4}\leq 16 n\true_{ii}\true_{jj}K_2^4.
\end{align*}
Similarly,
\begin{align*}
\sum_{k=1}^n \mathbb{E} \left\{(X_{ki}X_{kj})^q_{+}\right\}\leq \sum_{k=1}^n \sqrt{\mathbb{E} X_{ki}^{2q}\cdot \mathbb{E}X_{kj}^{2q}}\leq n\Sigma^{*q/2}_{ii}\Sigma^{*q/2}_{jj} K_2^{2q}(2q)^{q}.
\end{align*}
It is easy to show that  (\ref{eq3-1}) and (\ref{eq3-2}) hold with $\nu_{ij} =K_3n\true_{ii}\true_{jj} $ and $c_{ij} = K_3\sqrt{\true_{ii}\true_{jj}}$ for some  constant $K_3$ that  is sufficiently large and does not depend on $i$ or $j$.

By (\ref{eq3-3}),  it follows that 
\begin{align*}
I_{ij} &\leq 2\exp\left\{-\frac{n^2 t^2}{4(2\nu_{ij} + c_{ij}nt)}\right\}\\
&= 2\exp\left\{-\frac{n^2 t^2}{4(2K_3 n\true_{ii}\true_{jj} +K_3\sqrt{\true_{ii}\true_{jj}} nt)}\right\}\\
&\leq 2\exp\left\{-\frac{n t^2}{4(2K_3\true_{ii}\true_{jj} +K_3\sqrt{\true_{ii}\true_{jj}} t)}\right\}.
\end{align*}
If $t < 2 \max_j \true_{jj}$, we have
\begin{equation}
\label{eq3}
I_{ij} \leq 2 \exp\left(-\frac{c_2nt^2}{\max_j \true_{jj}}\right),
\end{equation}
where $c_2 = (16K_3)^{-1}$.

\section{Proof of bandwidth recovery}
\label{sec:bandwidth-recovery-proof}
\begin{proof}[Proof of Theorem \ref{thm:recovery1}]
Referring to the proof of Theorem \ref{thm:tapering}, we have 
$
\hat R^{(\ell+1)}_{g_{\ell}} = 0
$
if $\hat\nu_\ell\le0$ or equivalently if $h_\ell(0)\le\lambda^2$,
where $h_\ell$ is defined in \eqref{eq:nu}.  If $L=0$, then $K=p-1$
and $\hat K\le K$ holds automatically.  Thus, assume that $L>0$.
We prove that $\hat R^{(L+1)}_{g_L}=0$ by induction on $\ell$. For $\ell = 1$, 
$$
h_\ell(0) = 2\S_{1p}^2/w_1^2 \leq
\max_{ij}|\S_{ij}-\true_{ij}|^2\leq \lambda^2
$$
on the set $\mathcal A_x$ defined in \eqref{eq:whp}.
Assume $\hat{R}_{g_{\ell}}^{(\ell+1)}=0$ for $\ell < L$. Then, since
$\hat R^{(\ell+1)}_{s_{\ell+1}}=\S_{s_{\ell+1}}$,
$$
h_{\ell+1}(0)=\| \S_{s_{\ell+1}}\|^2/w_{\ell+1,\ell+1}^2 \leq
\lambda^2
$$
on $\mathcal A_x$.  Therefore, $\hat R_{g_L}^{(L+1)} = 0$ and so $\hatP_{g_L}=0$, i.e.,
$\hat K\le K$.
\end{proof}

\begin{proof}[Proof of Theorems \ref{thm:recovery2} and
  \ref{thm:recovery3}]
In both theorems, we wish to show that $\hatP_{s_{L+1}}\neq0$ or equivalently that
$h_\ell(0)>\lambda^2$ for each $\ell\ge L+1$, whence we get the condition
\begin{align}
  \min_{\ell\geq L+1}h_\ell(0)>\lambda^2.\label{eq:condition}
\end{align}
Recalling that $h_\ell(0)=\sum_{m=1}^\ell\|\hat R^{(\ell)}_{s_m}\|^2/w_{\ell
  m}^2$, we have
$$h_\ell(0)\ge\|\hat
R^{(\ell)}_{s_\ell}\|^2/w_{\ell}^2=\|\S_{s_\ell}\|^2/w_{\ell}^2.
$$
Now, being on the set $\mathcal A_x$ implies that for any $\ell$,
\begin{align}
  \|\S_{s_\ell}\|_2 \geq \|\true_{s_\ell}\|_2 -\|\S_{s_\ell}-\true_{s_\ell}\|_2\geq \|\true_{s_\ell}\|_2 -
  \sqrt{2\ell}\lambda=\|\true_{s_\ell}\|_2 -
  \lambda w_\ell. \label{eq:bound}
\end{align}
We consider the two theorems separately:
\begin{enumerate}
\item (Theorem \ref{thm:recovery2}) By assumption, for $\ell\ge L+1$,
  \eqref{eq:bound} gives us $h_\ell(0)>\lambda^2$.  Thus,
  \eqref{eq:condition} is satisfied, proving the first theorem.  
\item (Theorem \ref{thm:recovery3}) By the same argument as above, we
  have $h_\ell(0)>\lambda^2$ for $\ell=L+1$ and for $\ell\ge L+3$.
  It remains to show that $h_{L+2}(0)>\lambda^2$.  Since
  $h_{L+1}(0)>\lambda^2$, we have that $\hat\nu_{L+1}>0$ and since
  $\hat\nu_L\le0$ (see appendix on ellipsoidal projection), $\hat\nu_{L+1}=w_{L+1}(\|\S_{s_{L+1}}\|/\lambda-w_{L+1})>0$.  Thus, 
$$
\hat
R^{(L+2)}_{s_{L+1}}=\frac{\hat\nu_{L+1}\S_{s_{L+1}}}{w_{L+1}^2+\hat\nu_{L+1}}=\left(\frac{\|\S_{s_{L+1}}\|_2-\lambda
w_{L+1}}{\|\S_{s_{L+1}}\|_2}\right)\S_{s_{L+1}}
$$
and
\begin{align*}
  h_{L+2}(0) &=\|\hat R^{(L+2)}_{s_{L+2}}\|_2^2/w_{L+2}^2+\|\hat
  R^{(L+2)}_{s_{L+1}}\|_2^2/w_{L+2,L+1}^2\\
  &=\|\S_{s_{L+2}}\|_2^2/w_{L+2}^2+(\|\S_{s_{L+1}}\|_2-\lambda
  w_{L+1})^2/w_{L+2,L+1}^2\\
&\ge (\|\true_{s_{L+2}}\|_2-\lambda w_{L+2})_+^2/w_{L+2}^2+(\|\true_{s_{L+1}}\|_2-2\lambda
  w_{L+1})^2/w_{L+2,L+1}^2\\
&= (\|\true_{s_{L+2}}\|_2-\lambda w_{L+2})_+^2/w_{L+2}^2+\lambda^2\gamma^2
  w_{L+1}^2/w_{L+2,L+1}^2\\
&\ge(\|\true_{s_{L+2}}\|_2-\lambda w_{L+2})_+^2/w_{L+2}^2+\lambda^2\gamma^2
\end{align*}
again applying \eqref{eq:bound}, and using that
$w_{L+1}=w_{L+1,L+1}\ge w_{L+2,L+1}$.  Now, for this to exceed
$\lambda$ we have the following: If $\gamma\ge1$, there is
no requirement on $\true_{s_{L+2}}$; if $0<\gamma<1$, then
$$
\|\true_{s_{L+2}}\|_2>\lambda w_{L+2}\left(1+\sqrt{1-\gamma^2}\right)
$$
This establishes that
$h_\ell(0)>\lambda^2$ for $\ell\ge L+1$, completing the proof of the
second theorem. 
\end{enumerate}
\end{proof}

\section{Deterministic upper bound in Frobenius norm}
\label{sec:Fnorm-deter}
Define
\begin{equation}\label{eq:norms}
\|\Sigma\|_{2,1} = \sum_{\ell=1}^{p-1}w_{\ell}\|\Sigma_{s_{\ell}}\|_2\,\,\text{ 
  and }\,\,\|\Sigma\|_{2,\infty}=\max_{1\le\ell\le p-1}w_\ell^{-1}\|\Sigma_{s_\ell}\|_2.
\end{equation}
The results of Section \ref{sec:F-norm} are consequences of the
following theorem.  Recall that for any $B\in\mathbb{R}^{p\times p}$,
we define $L(B)$ to be such that $B_{g_{L(B)}} = 0$ and
$B_{s_{L(B)}+1}\neq 0$,  and $S(B) = \{L(B)+1,\cdots, p-1\}$ with
$K(B) = |S(B)|$.  
 Note then that $K(B) =p-1-L(B)$. 
 \begin{theorem}
Suppose $\true\in \mathcal{S}_p$ and $\max_{i, j} |\true_{ij}| \leq
M$ for some constant $M$. If the weights are given by either
\eqref{eq:groupw} or \eqref{eq:genw} and $\lambda = x\sqrt{\log p/n}$
for some constant $x>0$.
   Then for any $B\in\mathbb{R}^{p\times p}$,
\begin{equation*}
\begin{split}
\| \hatP -\true \|_F^2  \leq &\|\S_{s_p} - \true_{s_p}\|_2^2 +\|\true - B\|_F^2 +4\lambda^2  K(B) w_0(L(B)) \\
&+ 2\left(\|\S - \true\|_{2,\infty}- \lambda\right)\cdot1_{\left\{\|\S - \true\|_{2,\infty}\geq \lambda\right\}} \cdot \sqrt{\sum_{\ell=1}^p w_{\ell}^2}\cdot \|\hatP - B \|_F,
\end{split}
\end{equation*}
where $w_0(\ell) =  \max_{\ell  + 1\leq m\leq p-1} \sum_{s=m}^{p-1} w_{s m}^2$.
\label{thm:deter}
\end{theorem}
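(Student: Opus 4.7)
}

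The plan is to run a standard oracle-inequality argument for convex penalized M-estimators, while carefully exploiting the hierarchical nested structure of $\|\cdot\|_{2,1}^*$ to account for the bias-variance tradeoff in the precise form that the theorem records. I would begin from the optimality condition for \eqref{eq:primal}, namely $\half\|\hatP-\S\|_F^2+\lambda\|\hatP\|_{2,1}^*\leq\half\|B-\S\|_F^2+\lambda\|B\|_{2,1}^*$ for every $B\in\real^{p\times p}$. Expanding both Frobenius terms around $\true$, cancelling $\|\S-\true\|_F^2$, and using that the subgradient $\hat Z\in\partial\|\hatP\|_{2,1}^*$ characterized by $\S-\hatP=\lambda\hat Z$ satisfies $\langle\hatP,\hat Z\rangle=\|\hatP\|_{2,1}^*$ while $\langle B,\hat Z\rangle\le\|B\|_{2,1}^*$, I obtain the improved oracle inequality
\begin{equation*}
\|\hatP-\true\|_F^2+\|\hatP-B\|_F^2\leq\|B-\true\|_F^2+2\langle\hatP-B,\S-\true\rangle+2\lambda(\|B\|_{2,1}^*-\|\hatP\|_{2,1}^*).
\end{equation*}
The extra $\|\hatP-B\|_F^2$ on the left will later absorb a cross term via AM-GM, which is what produces the variance constant $4\lambda^2 K(B)w_0(L(B))$.

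Since the penalty ignores the diagonal, $\hatP_{s_p}=\S_{s_p}$, which directly contributes the term $\|\S_{s_p}-\true_{s_p}\|_2^2$ and reduces the analysis to the off-diagonal sum. I would then decompose both the cross term and the penalty difference by subdiagonal index, splitting $\ell\in\{1,\dots,L(B)\}$ (where $B_{s_\ell}=0$) from $\ell\in S(B)=\{L(B)+1,\dots,p-1\}$. For $\ell\le L(B)$, $(\hatP-B)_{s_\ell}=\hatP_{s_\ell}$ and Property \ref{prop:weights-1} yields the penalty lower bound $\|(W^{(\ell)}\!*\!\hatP)_{g_\ell}\|_2\geq w_\ell\|\hatP_{s_\ell}\|_2$; pairing Cauchy--Schwarz on the cross term with this penalty contribution produces the net bound $2(\|\S-\true\|_{2,\infty}-\lambda)_+ w_\ell\|\hatP_{s_\ell}\|_2$, which after one further Cauchy--Schwarz contributes to the final $2(\|\S-\true\|_{2,\infty}-\lambda)_+\sqrt{\sum w_\ell^2}\,\|\hatP-B\|_F$ term of the theorem.

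For $\ell\in S(B)$ the key technical move is the pointwise inequality
\begin{equation*}
\|(W^{(\ell)}\!*\!\hatP)_{g_\ell}\|_2=\sqrt{\|(W^{(\ell)}\!*\!\hatP)_{s_{1:L}}\|_2^2+\|(W^{(\ell)}\!*\!\hatP)_{s_{L+1:\ell}}\|_2^2}\geq\|(W^{(\ell)}\!*\!\hatP)_{s_{L+1:\ell}}\|_2,
\end{equation*}
which, combined with $(W^{(\ell)}\!*\!B)_{g_\ell}=(W^{(\ell)}\!*\!B)_{s_{L+1:\ell}}$ and the triangle inequality for the weighted $\ell_2$ norm, delivers $\|(W^{(\ell)}\!*\!B)_{g_\ell}\|_2-\|(W^{(\ell)}\!*\!\hatP)_{g_\ell}\|_2\leq\sqrt{\sum_{m=L+1}^\ell w_{\ell m}^2\|(B-\hatP)_{s_m}\|_2^2}$. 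Summing and applying Cauchy--Schwarz over the $K(B)$ values of $\ell$, then pulling out $\max_m\sum_{s\ge m}w_{sm}^2=w_0(L(B))$, bounds the support penalty contribution by $2\lambda\sqrt{K(B)w_0(L(B))}\,\|\hatP-B\|_F$. The support cross term is handled analogously using $\|(\S-\true)_{s_\ell}\|_2\leq w_\ell\|\S-\true\|_{2,\infty}$ and the elementary fact $\sum_{\ell\in S}w_{\ell\ell}^2\leq K(B)w_0(L(B))$, after splitting $\|\S-\true\|_{2,\infty}\leq\lambda+(\|\S-\true\|_{2,\infty}-\lambda)_+$. The $\lambda$-pieces from the cross term and the penalty difference combine to $4\lambda\sqrt{K(B)w_0(L(B))}\,\|\hatP-B\|_F$, and the AM-GM inequality $2ab\leq a^2+b^2$ with $a=2\lambda\sqrt{K(B)w_0(L(B))}$ and $b=\|\hatP-B\|_F$ converts this into $4\lambda^2 K(B)w_0(L(B))+\|\hatP-B\|_F^2$; the residual $\|\hatP-B\|_F^2$ is absorbed by the matching term on the LHS of the improved oracle inequality, yielding the stated bound.

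The main obstacle is the coupling between subdiagonals induced by the hierarchical penalty: each summand $\|(W^{(\ell)}\!*\!\Sigma)_{g_\ell}\|_2$ mixes all of $s_1,\dots,s_\ell$ through a single weighted $\ell_2$ norm, which blocks any subdiagonal-wise decomposition of $\|B\|_{2,1}^*-\|\hatP\|_{2,1}^*$. The resolution is the elementary $\sqrt{a^2+b^2}\geq b$ trick above, which discards the non-support portion of the penalty on $\hatP$ just enough that the usual triangle inequality survives on the support portion; combined with one Cauchy--Schwarz across $\ell$ and one maximization across $m$, this is precisely what produces the factor $w_0(L(B))$ that records the hierarchical geometry of the penalty.
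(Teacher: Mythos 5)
Your proposal is correct, and its skeleton is the same as the paper's: the extra $\|\hatP-B\|_F^2$ produced by the cosine identity, the split of the penalty at $L(B)$, the reverse-triangle and Cauchy--Schwarz steps that yield the factor $w_0(L(B))$, and a final AM--GM absorbed into that extra term are exactly the content of Propositions \ref{prop:subgrad} and \ref{prop:deter} and the assembly in Appendix \ref{sec:Fnorm-proof}. The one genuine difference is in how the penalty term is handled: you bound the difference $\|B\|_{2,1}^*-\|\hatP\|_{2,1}^*$ directly, keeping only the $m=\ell$ coordinate of $f_\ell(\hatP)$ for $\ell\le L(B)$ and only the support coordinates for $\ell>L(B)$ before applying the reverse triangle inequality, whereas the paper invokes monotonicity of the subdifferential and constructs explicit subgradients at $B$ (namely $A^{(\ell)}_{s_m}=w_{\ell m}\hatP_{s_m}/f_\ell(\hatP)$ for $\ell\le L(B)$ and $A^{(\ell)}_{g_{L(B)}}=0$ for $\ell>L(B)$); your route is slightly more elementary, needs no description of $\partial f_\ell$, and produces the same constants. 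One bookkeeping point must be repaired in a full write-up: as written you spend the left-hand slack $\|\hatP-B\|_F^2$ twice, once to absorb $\|(\hatP-B)_{s_p}\|_2^2$ coming from the diagonal cross term and once as the $b^2$ of the final AM--GM with $b=\|\hatP-B\|_F$. The fix is to take $b$ to be the quantity your Cauchy--Schwarz step actually produces, the Frobenius norm of $\hatP-B$ restricted to the off-diagonal subdiagonals in $S(B)$ (this is what the paper does in \eqref{eq5-3}); then the diagonal and support remainders together are at most $\|\hatP-B\|_F^2$, the absorption goes through, and the stated bound follows exactly.
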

\noindent Recalling that the subdiagonal $s_m$ is included in $g_\ell$
for $ m\le\ell\le
p-1$, we see that  $\sum_{\ell=m}^{p-1}w_{\ell m}^2$ measures the
amount of ``net weight'' applied to the subdiagonal $s_m$ and
$w_0(L(B))$ measures the largest amount of ``net weight'' applied to
any subdiagonal in $m\in S(B)$.
 
\subsection{Proof of Theorem \ref{thm:deter}}\label{sec:Fnorm-proof}
\noindent We begin by stating and proving two lemmas  and two propositions that will be instrumental in the proof of Theorem \ref{thm:deter}.  The first lemma  provides bounds on the inner product of two
matrices in terms of the newly introduced norms in \eqref{eq:norms}.  We directly bound the inner product $\langle A,B\rangle^-$ in which we leave out the contribution of the diagonals, $$
\langle A,B\rangle^-=\langle A,B\rangle -
\langle A_{s_p},B_{s_p}\rangle=\sum_{j\neq k}A_{jk}B_{jk}.
$$
We treat the main diagonal differently from the rest because it does
not appear in the penalty term $\|\Sigma\|_{2,1}^*$ of \eqref{eq:penalty}.

\begin{lem}
Let $A$ and $B$ be two arbitrary $p\times p$ matrices, then
\[
\langle A, B\rangle^-  \leq \|A\|_{2,1}\cdot \|B\|_{2,\infty} \leq \|A\|_{2,1}^* \cdot \|B\|_{2,\infty}.
\]
\label{lem:innerprod}
\end{lem}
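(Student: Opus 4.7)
The plan is to decompose $\langle A,B\rangle^-$ along the subdiagonals $s_1,\ldots,s_{p-1}$ (which partition the strictly off-diagonal entries), bound each piece by Cauchy--Schwarz, and then invoke the classical $\ell^1$/$\ell^\infty$ Hölder duality after a judicious rescaling by the weights $w_\ell$. The bound $\|A\|_{2,1}\le \|A\|_{2,1}^*$ will then follow by retaining only the diagonal term inside each square root in the definition of $\|\cdot\|_{2,1}^*$, using Property \ref{prop:weights-1}.

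First I would write
\[
\langle A, B\rangle^- \;=\; \sum_{\ell=1}^{p-1}\langle A_{s_\ell}, B_{s_\ell}\rangle,
\]
which is valid because $\{s_1,\ldots,s_{p-1}\}$ partitions the off-diagonal index set. Cauchy--Schwarz on each subdiagonal gives $\langle A_{s_\ell},B_{s_\ell}\rangle \le \|A_{s_\ell}\|_2\|B_{s_\ell}\|_2$. Inserting $w_\ell/w_\ell$ and applying $\sum_\ell a_\ell b_\ell \le (\sum_\ell a_\ell)\max_\ell b_\ell$ with $a_\ell = w_\ell\|A_{s_\ell}\|_2$ and $b_\ell = w_\ell^{-1}\|B_{s_\ell}\|_2$ yields the first inequality $\langle A,B\rangle^- \le \|A\|_{2,1}\cdot\|B\|_{2,\infty}$.

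For the second inequality, I would use Property \ref{prop:weights-1}, which asserts $w_\ell = w_{\ell\ell} = \max_{1\le m\le \ell}w_{\ell m}$. For each $\ell$, retaining only the $m=\ell$ summand inside the square root gives
\[
\sqrt{\sum_{m=1}^\ell w_{\ell m}^2\|A_{s_m}\|_2^2} \;\ge\; \sqrt{w_{\ell\ell}^2\|A_{s_\ell}\|_2^2} \;=\; w_\ell\|A_{s_\ell}\|_2,
\]
and summing over $\ell$ produces $\|A\|_{2,1}^* \ge \|A\|_{2,1}$, completing the chain.

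I do not expect a significant obstacle here: the only points requiring care are the verification that the subdiagonals $s_1,\ldots,s_{p-1}$ exactly partition the off-diagonal entries (so the $s_p$-removal in $\langle\cdot,\cdot\rangle^-$ lines up with the index range in the two norms), and the correct use of Property \ref{prop:weights-1} to extract $w_{\ell\ell}=w_\ell$ from the nested weighted sums.
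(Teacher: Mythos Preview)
Your proposal is correct and follows essentially the same argument as the paper: decompose along subdiagonals, apply Cauchy--Schwarz termwise, then use the $\ell^1/\ell^\infty$ duality after inserting $w_\ell/w_\ell$; the paper simply states the second inequality as ``follows from $\|A\|_{2,1}\le\|A\|_{2,1}^*$,'' which you have spelled out by retaining the $m=\ell$ summand.
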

\begin{proof}
\begin{align*}
\langle A, B\rangle^- &=  \sum_{\ell=1}^{p-1} \langle A_{s_{\ell}}, B_{s_{\ell}}\rangle\\
& \leq  \sum_{\ell=1}^{p-1} \|A_{s_{\ell}}\|_2 \cdot\|B_{s_{\ell}}\|_2\\
&\leq \|A\|_{2,1} \cdot \|B\|_{2,\infty}.
\end{align*}
The second inequality follows from the fact that $\|A\|_{2,1}\leq \|A\|_{2,1}^*$.
\end{proof}
\noindent For any matrix $\Sigma\in\real^{p\times p}$ and set
$S\subseteq\{1,\ldots,p-1\}$, let $\Sigma_S$ denote the $p\times p$ matrix
such that $[\Sigma_S]_{ij}=\Sigma_{ij}1\{|i-j|\in S\}$ and let $\Sigma_{S^c}=\Sigma-\Sigma_S$.
\begin{lem}\label{lem2}
Let $S =\{L+1,\dots, p-1\}$ for some $L$.
For any $p\times p$ matrix $\Sigma$, 
\begin{eqnarray*}
(i) \ \  \|\Sigma\|_{2,1}&=& \|\Sigma_S\|_{2,1} + \|\Sigma_{S^c}\|_{2,1},\\
(ii) \ \   \|\Sigma\|_{2,1}^{\ast}&\leq& \|\Sigma_S\|_{2,1}^{\ast} + \|\Sigma_{S^c}\|_{2,1}^{\ast},\\
(iii) \ \  \|\Sigma\|_{2,1}^{\ast}&\geq&\|\Sigma_S\|_{2,1}^{\ast} + \|\Sigma_{S^c}\|_{2,1}.
\end{eqnarray*}
\end{lem}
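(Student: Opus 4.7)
I will prove all three parts by unpacking the definitions of $\|\cdot\|_{2,1}$ and $\|\cdot\|_{2,1}^{\ast}$ and arguing subdiagonal-by-subdiagonal. The central observation is that $S=\{L+1,\ldots,p-1\}$ partitions the subdiagonal indices $\{1,\ldots,p-1\}$ into $S$ and $S^c=\{1,\ldots,L\}$, so that for each $\ell$ exactly one of $(\Sigma_S)_{s_\ell}$ and $(\Sigma_{S^c})_{s_\ell}$ equals $\Sigma_{s_\ell}$ while the other vanishes. Part (i) is then immediate: each summand $w_\ell\|\Sigma_{s_\ell}\|_2$ in $\|\Sigma\|_{2,1}$ is assigned to exactly one of $\|\Sigma_S\|_{2,1}$ or $\|\Sigma_{S^c}\|_{2,1}$. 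Part (ii) is just the triangle inequality for the seminorm $\|\cdot\|_{2,1}^{\ast}$ applied to $\Sigma=\Sigma_S+\Sigma_{S^c}$; it can be verified term-by-term by the elementary bound $\sqrt{a^2+b^2}\le a+b$ inside each $\ell$-th summand, using the disjoint-support identity $\|\Sigma_{s_m}\|_2^2=\|(\Sigma_S)_{s_m}\|_2^2+\|(\Sigma_{S^c})_{s_m}\|_2^2$.

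Part (iii) is where the real work lies. I will split the outer sum $\sum_{\ell=1}^{p-1}$ defining $\|\Sigma\|_{2,1}^{\ast}$ into the ranges $\ell\in S^c$ and $\ell\in S$ and apply two \emph{different} elementary bounds. For $\ell\in S^c$, every inner index $m\le\ell$ also lies in $S^c$, so the $\ell$-th summand only sees $\Sigma_{S^c}$; retaining just the $m=\ell$ term inside the square root gives
\[
\sqrt{\sum_{m=1}^{\ell} w_{\ell m}^2\|\Sigma_{s_m}\|_2^2}\ \ge\ w_{\ell\ell}\|\Sigma_{s_\ell}\|_2\ =\ w_\ell\|\Sigma_{s_\ell}\|_2,
\]
and summing over $\ell\in S^c$ produces exactly $\|\Sigma_{S^c}\|_{2,1}$. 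For $\ell\in S$ the inner sum contains both $S$- and $S^c$-contributions; since all terms are nonnegative I drop the $S^c$ part,
\[
\sqrt{\sum_{m=1}^{\ell} w_{\ell m}^2\|\Sigma_{s_m}\|_2^2}\ \ge\ \sqrt{\sum_{m=1}^{\ell} w_{\ell m}^2\|(\Sigma_S)_{s_m}\|_2^2},
\]
and summing over $\ell\in S$ reproduces $\|\Sigma_S\|_{2,1}^{\ast}$ on the nose, because the $\ell\le L$ summands of $\|\Sigma_S\|_{2,1}^{\ast}$ all vanish (every $m\le\ell\le L$ sits in $S^c$, so $(\Sigma_S)_{s_m}=0$). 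Adding the two ranges yields the desired $\|\Sigma\|_{2,1}^{\ast}\ge\|\Sigma_{S^c}\|_{2,1}+\|\Sigma_S\|_{2,1}^{\ast}$.

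\textbf{Main obstacle.} The delicate point is that the two lower bounds used in part (iii) are \emph{asymmetric} — the unstarred norm appears for $S^c$ while the starred norm appears for $S$ — and this asymmetry is forced by the hierarchical group structure. A subdiagonal indexed by $\ell\in S^c$ is outermost in the nesting and appears in only a few $\ell'$-summands of $\|\Sigma\|_{2,1}^{\ast}$, and a single term inside a nested square root only supports the weaker $\|\cdot\|_{2,1}$-bound; a subdiagonal indexed by $\ell\in S$, by contrast, appears inside every $\ell'$-th summand with $\ell'\ge\ell$, which is precisely the slack that powers the stronger $\|\cdot\|_{2,1}^{\ast}$-bound. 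The one subtlety to watch is that the two accountings reconstruct the target norms exactly and do not overlap: the $\ell\in S^c$ range of $\|\Sigma\|_{2,1}^{\ast}$ is used to dominate $\|\Sigma_{S^c}\|_{2,1}$, and the $\ell\in S$ range to dominate $\|\Sigma_S\|_{2,1}^{\ast}$, with no double-counting. Beyond this bookkeeping the argument is routine, since the only analytic inputs are that dropping nonnegative terms does not increase a sum and that the convention $w_\ell:=w_{\ell\ell}$ from Property~\ref{prop:weights-1} lets us isolate the single $m=\ell$ term.
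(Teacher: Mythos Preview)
Your proposal is correct and follows essentially the same approach as the paper's own proof. The only cosmetic difference is that the paper packages the case split in part (iii) into a single termwise inequality with indicator functions, writing $\sqrt{\sum_{m=1}^{\ell} w_{\ell m}^2\|\Sigma_{s_m}\|_2^2}\ge\sqrt{\sum_{m=1}^{\ell}w_{\ell m}^2 \|1_{\{m\in S\}}\Sigma_{s_{m}}\|_2^2}+w_{\ell\ell}\|1_{\{\ell\notin S\}}\Sigma_{s_{\ell}}\|_2$ for every $\ell$, whereas you explicitly separate $\ell\in S^c$ from $\ell\in S$; unpacking the indicators shows the two presentations are identical.
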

\begin{proof}
We have
\begin{align*}
\|\Sigma\|_{2,1} &= \sum_{\ell=1}^{p-1} w_{\ell}\|\Sigma_{s_{\ell}}\|_2\\
&= \sum_{\ell=1}^{p-1}w_{\ell}\left(\|1_{\{\ell \in S\}}\Sigma_{s_{\ell}}\|_2 +  \|1_{\{\ell \notin S\}}\Sigma_{s_{\ell}}\|_2\right)\\
&=\|\Sigma_S\|_{2,1} + \|\Sigma_{S^c}\|_{2,1}.
\end{align*}
Similarly,
\begin{align*}
\|\Sigma\|_{2,1}^{\ast} &= \sum_{\ell=1}^{p-1}\sqrt{\sum_{m=1}^{\ell} w_{\ell m}^2\|\Sigma_{s_m}\|_2^2}\\
&\leq \sum_{\ell=1}^{p-1}\left\{\sqrt{\sum_{m=1}^{\ell}w_{\ell m}^2 \|1_{\{m\in S\}}\Sigma_{s_{m}}\|_2^2} +\sqrt{\sum_{m=1}^{\ell}w_{\ell m}^2 \|1_{\{m \notin S\}}\Sigma_{s_{m}}\|_2^2}\right\}\\
&=\|\Sigma_S\|_{2,1}^{\ast} + \|\Sigma_{S^c}\|_{2,1}^{\ast}.
\end{align*}
Finally,
\begin{align*}
\|\Sigma\|_{2,1}^{\ast} &= \sum_{\ell=1}^{p-1}\sqrt{\sum_{m=1}^{\ell} w_{\ell m}^2\|\Sigma_{s_m}\|_2^2}\\
&\geq \sum_{\ell=1}^{p-1}\left\{\sqrt{\sum_{m=1}^{\ell}w_{\ell m}^2 \|1_{\{m\in S\}}\Sigma_{s_{m}}\|_2^2} +w_{\ell \ell} \|1_{\{\ell \notin S\}}\Sigma_{s_{\ell}}\|_2\right\}\\
&= \|\Sigma_S\|_{2,1}^{\ast} + \|\Sigma_{S^c}\|_{2,1}.
\end{align*}
\end{proof}

\noindent  Let $w_{\ell\cdot}\in\real^\ell$ denote the weights on the
  $\ell$th triangle and let  the weight matrix $W^{(\ell)}\in \mathbb{R}^{p\times p}$ be  defined as: $W^{(\ell)}_{s_m} = w_{\ell m}1_{2m}$ for $1\leq m\leq \ell$
  and $W^{(\ell)}_{s_m} = 0$ if $m>\ell$. Here $1_{2m}$ is a length-$2m$ vector of $1$'s.
 Observe that the penalty term \eqref{eq:penalty} can be equivalently written as 
$$
\|\Sigma\|_{2,1}^*=\sum_{\ell=1}^{p-1}\|(W^{(\ell)}*\Sigma)_{g_\ell}\|_2,
$$
where $*$ denotes elementwise multiplication.  \\
  
\noindent Define $f_{\ell}(B): = \|(W^{(\ell)}\ast B)_{g_{\ell}}\|_2$. Recall the definitions of the new norms in \eqref{eq:norms}. 
\begin{prop}
For any $B\in\mathbb{R}^{p\times p}$ and $W^{(\ell)}*A^{(\ell)}\in \partial f_{\ell}(B), 1\leq \ell\leq p-1$,
 \begin{equation*}
  \begin{split}
  \| \hatP -\true \|_F^2 \leq&  \| \true - B\|_F^2 -  \|\hatP - B\|_F^2  +2\langle \S_{s_p} - \true_{s_p}, \hatP_{s_p} - B_{s_p}\rangle \\
  & +  2\|\S - \true\|_{2,\infty} \cdot \|\hatP - B \|_{2,1}- 2\lambda
\left \langle \sum_{\ell=1}^{p-1} W^{(\ell)}\ast  A^{(\ell)}, \hatP - B\right\rangle.
\end{split}
 \end{equation*}
\label{prop:subgrad}
\end{prop}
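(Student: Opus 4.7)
The plan is to use the two standard ingredients of a basic inequality argument: strong convexity of the quadratic loss at the optimum $\hatP$, and convexity of each term $f_\ell$ in the penalty. The objective $F(\Sigma) = \tfrac12\|\Sigma - \S\|_F^2 + \lambda\|\Sigma\|_{2,1}^*$ is $1$-strongly convex in $\Sigma$ (the penalty is convex and the quadratic term is $1$-strongly convex), and $0 \in \partial F(\hatP)$. Strong convexity at the minimizer therefore gives
\begin{equation*}
\tfrac12\|B - \S\|_F^2 + \lambda\|B\|_{2,1}^* \;\geq\; \tfrac12\|\hatP - \S\|_F^2 + \lambda\|\hatP\|_{2,1}^* + \tfrac12\|\hatP - B\|_F^2
\end{equation*}
for any $B$.

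Next I would rewrite $\|\hatP - \S\|_F^2 - \|B - \S\|_F^2$ in terms of $\true$ by the polarization-type identity
\begin{equation*}
\|\hatP - \S\|_F^2 - \|B - \S\|_F^2 \;=\; \|\hatP - \true\|_F^2 - \|B - \true\|_F^2 - 2\langle \hatP - B,\ \S - \true\rangle,
\end{equation*}
obtained by writing $\hatP - \S = (\hatP - \true) - (\S - \true)$ and similarly for $B$. Plugging this into the strong-convexity inequality and multiplying through by $2$ yields
\begin{equation*}
\|\hatP - \true\|_F^2 \;\leq\; \|B - \true\|_F^2 - \|\hatP - B\|_F^2 + 2\langle \hatP - B,\ \S - \true\rangle + 2\lambda\bigl(\|B\|_{2,1}^* - \|\hatP\|_{2,1}^*\bigr),
\end{equation*}
which already matches the target structure.

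The remaining two steps are purely bookkeeping. First, split the inner product with $\S - \true$ into its main-diagonal and off-diagonal parts, $\langle \hatP - B,\ \S - \true\rangle = \langle (\hatP - B)_{s_p},\ (\S - \true)_{s_p}\rangle + \langle \hatP - B,\ \S - \true\rangle^{-}$, and apply Lemma \ref{lem:innerprod} to the off-diagonal piece to get $\langle \hatP - B,\ \S - \true\rangle^{-} \leq \|\hatP - B\|_{2,1}\cdot\|\S - \true\|_{2,\infty}$. Second, since $f_\ell$ is convex and $W^{(\ell)}\ast A^{(\ell)} \in \partial f_\ell(B)$, the subgradient inequality gives $f_\ell(\hatP) \geq f_\ell(B) + \langle W^{(\ell)}\ast A^{(\ell)},\ \hatP - B\rangle$, hence, summing over $\ell$,
\begin{equation*}
\|B\|_{2,1}^* - \|\hatP\|_{2,1}^* \;\leq\; -\Bigl\langle \sum_{\ell=1}^{p-1} W^{(\ell)}\ast A^{(\ell)},\ \hatP - B\Bigr\rangle.
\end{equation*}
Substituting these two bounds into the displayed inequality above yields exactly the stated conclusion. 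There is no real obstacle here; the only point worth stressing is that the $-\|\hatP - B\|_F^2$ term in the conclusion comes for free from the strong convexity of the quadratic loss, and the decision to separate the main diagonal $s_p$ is forced by the fact that it is not penalized in $\|\cdot\|_{2,1}^*$, so one cannot apply Lemma \ref{lem:innerprod} to it.
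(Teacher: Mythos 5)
Your proof is correct, and it reaches the proposition by a somewhat different route than the paper. The paper argues at the level of first-order conditions: it takes subgradients $W^{(\ell)}*\hat A^{(\ell)}\in\partial f_\ell(\hatP)$ at the solution, invokes the primal--dual relation \eqref{eq:primal-dual} to write $\hatP-\S=-\lambda\sum_{\ell}W^{(\ell)}*\hat A^{(\ell)}$, passes from $\hat A^{(\ell)}$ to the arbitrary subgradients $A^{(\ell)}$ at $B$ via monotonicity of the subdifferential, and then converts $\langle\hatP-\true,\hatP-B\rangle$ into squared norms with the cosine formula. You instead argue at the level of function values: $1$-strong convexity of $\half\|\Sigma-\S\|_F^2+\lambda\|\Sigma\|_{2,1}^*$ at its minimizer produces the basic inequality with the extra $\half\|\hatP-B\|_F^2$ term, a polarization identity recentres the quadratic terms at $\true$, and a single subgradient inequality at $B$ bounds $\|B\|_{2,1}^*-\|\hatP\|_{2,1}^*$ by the inner-product term. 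The two arguments are equivalent in substance (strong convexity at the minimizer is exactly stationarity plus the subgradient inequality at $\hatP$, and the paper's monotonicity step is the sum of the subgradient inequalities at $\hatP$ and at $B$), but yours is slightly more self-contained, since it never needs the dual representation of Theorem \ref{thm:dual}, only that $\hatP$ minimizes a strongly convex objective; the paper's version keeps the dual variables explicit, which is consistent with how they are used elsewhere (e.g., Theorem \ref{thm:tapering}). The final bookkeeping---separating the unpenalized diagonal $s_p$ and applying Lemma \ref{lem:innerprod} to the off-diagonal part---is identical in both, and your observation that this split is forced because $s_p$ does not appear in $\|\cdot\|_{2,1}^*$ is exactly right.
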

\begin{proof}  $f_{\ell}(B)$ is convex and its sub-differential is
 \begin{equation}
 \begin{split}
 \partial f_{\ell} (B) =&\Bigl\{W^{(\ell)}*A^{(\ell)}\in\mathbb{R}^{p\times p}:
 \| A_{g_{\ell}}^{(\ell)}\|_2 \leq 1,  A_{g_{\ell}^c}^{(\ell)} = 0 \text{ and }\\
 \quad &\left\langle (W^{(\ell)}*B)_{\gl},A^{(\ell)}_{\gl}\right\rangle=\|
  (W^{(\ell)}* B)_{\gl}\|_2\cdot \|A^{(\ell)}_{\gl}\|_2 \Bigr\}.
  \end{split}
  \label{eq:subdiff}
  \end{equation}
Let $W^{(\ell)}*\hat A^{(\ell)} \in \partial f_{\ell}(\hatP)$. For an arbitrary $B\in\mathbb{R}^{p\times p}$, let $W^{(\ell)} *A^{(\ell)} \in \partial f_{\ell}(B)$.
Since the sub-gradient of a convex function is monotone,  we have
  \begin{align*}
  \langle W^{(\ell)}*\hat A^{(\ell)}, \hatP - B \rangle &= \left \langle (W^{(\ell)}*\hat A^{(\ell)})_{g_{\ell}}, (\hatP - B)_{g_{\ell}}\right \rangle\\
  &\geq  \left \langle (W^{(\ell)}* A^{(\ell)})_{g_{\ell}}, (\hatP - B)_{g_{\ell}}\right \rangle\\
  &=   \langle W^{(\ell)}* A^{(\ell)}, \hatP - B \rangle. 
  \end{align*}
  It follows that 
\begin{equation}
  \left \langle \sum_{\ell=1}^{p-1} W^{(\ell)}\ast \hat A^{(\ell)}, \hatP - B\right\rangle \geq \left \langle \sum_{\ell=1}^{p-1} W^{(\ell)}\ast  A^{(\ell)}, \hatP - B\right\rangle.
\label{eq:subgrad-monotone}
\end{equation}
Using the primal-dual relation \eqref{eq:primal-dual}, 
$$
\hatP -\S = - \lambda\left(\sum_{\ell=1}^{p-1} W^{(\ell)}\ast \hat A^{(\ell)}\right),
$$
and the fact that $ \hatP-\true =  \hatP- \S + \S - \true$, we have 
$$
 \langle \hatP-\true, \hatP - B\rangle = \langle \S - \true, \hatP - B\rangle - \lambda 
\left \langle \sum_{\ell=1}^{p-1} W^{(\ell)}\ast \hat A^{(\ell)}, \hatP - B\right\rangle.
$$
Combining this with \eqref{eq:subgrad-monotone}, we derive that
\begin{equation}
\label{eq2}
 \langle \hatP-\true, \hatP - B\rangle
\leq  \langle \S - \true, \hatP - B\rangle - \lambda 
\left \langle \sum_{\ell=1}^{p-1} W^{(\ell)}\ast  A^{(\ell)}, \hatP - B\right\rangle.
 \end{equation}
 By the cosine formula, 
 $$
2 \langle \hatP-\true, \hatP - B\rangle =  \| \hatP -\true \|_F^2 + \|\hatP - B\|_F^2 - \| \true - B\|_F^2.
 $$
 Therefore, we can rewrite~\eqref{eq2} as
$$
  \| \hatP -\true \|_F^2 + \|\hatP - B\|_F^2\leq  \| \true - B\|_F^2 + 2\langle \S - \true, \hatP - B\rangle - 2\lambda 
\left \langle \sum_{\ell=1}^{p-1} W^{(\ell)}\ast  A^{(\ell)}, \hatP - B\right\rangle
$$
and the proposition follows since, by Lemma \ref{lem:innerprod},  $$\langle \S - \true, \hatP - B\rangle \leq \langle \S_{s_p} - \true_{s_p}, \hatP_{s_p} - B_{s_p}\rangle +   \| \S - \true\|_{2,\infty}\cdot \|\hatP - B \|_{2,1}.$$
\end{proof}

\begin{prop}
For any $B\in\mathbb{R}^{p\times p}$,
 \begin{equation*}
  \begin{split}
  \| \hatP -\true \|_F^2 \leq&  \| \true - B\|_F^2 -  \|\hatP - B\|_F^2  +2\langle \S_{s_p} - \true_{s_p}, \hatP_{s_p} - B_{s_p}\rangle \\
  & +  2\left(\|\S - \true\|_{2,\infty}-\lambda\right) \cdot \|\hatP - B \|_{2,1} + 4\lambda \|\hatP_{S(B)}-B\|_{2,1}^*.
\end{split}
 \end{equation*}
\label{prop:deter}
\end{prop}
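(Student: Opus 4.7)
The plan is to start from Proposition~\ref{prop:subgrad}, which for \emph{any} choice of subgradients $W^{(\ell)}*\Al \in \partial f_\ell(B)$ gives an upper bound on $\|\hatP - \true\|_F^2$; I will exploit this freedom to sharpen the inner-product term $-2\lambda\langle \sum_\ell W^{(\ell)}*\Al,\, \hatP - B\rangle$. The key observation is that the subdifferential~\eqref{eq:subdiff} simplifies dramatically for $\ell \le L(B)$, because $B_\gl=0$ kills the alignment constraint, and that for $\ell \in S(B)$ an aligned choice automatically localises the subgradient to the support of $B$.

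For $\ell \le L(B)$, since $B_\gl=0$ the alignment condition in~\eqref{eq:subdiff} is vacuous and every $\Al$ with $\|\Al_\gl\|_2\le 1$ and $\Al_{\gl^c}=0$ lies in $\partial f_\ell(B)$. I take $\Al_\gl$ to be the unit vector in the direction of $(W^{(\ell)}*\hatP)_\gl$, so that $\langle W^{(\ell)}*\Al, \hatP-B\rangle = f_\ell(\hatP) \ge w_\ell \|\hatP_{s_\ell}\|_2$. For $\ell \in S(B)$, I take $\Al_\gl = (W^{(\ell)}*B)_\gl/f_\ell(B)$; this has unit norm and satisfies the alignment condition. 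Because $B$ is zero on every sub-diagonal $s_m$ with $m \notin S(B)$, this $\Al$ is supported only on sub-diagonals $s_m$ with $m\in S(B)$, and therefore
\begin{equation*}
\langle W^{(\ell)}*\Al,\, \hatP - B\rangle \;=\; \langle W^{(\ell)}*\Al,\, \hatP_{S(B)}-B\rangle,
\end{equation*}
which by Cauchy--Schwarz is bounded in absolute value by $f_\ell(\hatP_{S(B)}-B)$.

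Summing these two bounds across all $\ell$,
\begin{equation*}
-\sum_{\ell=1}^{p-1}\langle W^{(\ell)}*\Al,\, \hatP - B\rangle \;\le\; -\sum_{\ell \le L(B)} w_\ell\|\hatP_{s_\ell}\|_2 + \sum_{\ell \in S(B)} f_\ell(\hatP_{S(B)}-B) \;=\; -\|\hatP_{S(B)^c}\|_{2,1} + \|\hatP_{S(B)}-B\|_{2,1}^*.
\end{equation*}
The first identification uses the definition of $\|\cdot\|_{2,1}$ together with the fact that $\hatP_{S(B)^c}$ is supported on $\{s_\ell : \ell \le L(B)\}$; the second uses that $f_\ell(\hatP_{S(B)}-B)=0$ whenever $\ell \le L(B)$, because then $\gl$ is disjoint from the support of $\hatP_{S(B)}-B$, so the full-range sum reduces to the sum over $S(B)$.

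To conclude, I multiply by $2\lambda$ and insert the bound into Proposition~\ref{prop:subgrad}. Splitting $\|\hatP-B\|_{2,1} = \|\hatP_{S(B)}-B\|_{2,1} + \|\hatP_{S(B)^c}\|_{2,1}$ via Lemma~\ref{lem2}(i), the $-2\lambda\|\hatP_{S(B)^c}\|_{2,1}$ combines with $2\|\S-\true\|_{2,\infty}\|\hatP-B\|_{2,1}$ to produce $2(\|\S-\true\|_{2,\infty}-\lambda)\|\hatP-B\|_{2,1}$ together with a leftover $2\lambda\|\hatP_{S(B)}-B\|_{2,1}$, which I absorb into the $\|\cdot\|_{2,1}^*$ term using the elementary inequality $\|\Sigma\|_{2,1} \le \|\Sigma\|_{2,1}^*$ (a consequence of $w_\ell\|\Sigma_{s_\ell}\|_2 \le f_\ell(\Sigma)$). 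The main conceptual obstacle is the support argument for $\ell \in S(B)$: it is what replaces the coarse bound $f_\ell(\hatP-B)$ by the sharper $f_\ell(\hatP_{S(B)}-B)$ and is responsible for the clean coefficient $4\lambda$ in front of $\|\hatP_{S(B)}-B\|_{2,1}^*$ rather than a cruder bound involving $\|\hatP-B\|_{2,1}^*$.
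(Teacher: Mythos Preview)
Your proposal is correct and follows essentially the same route as the paper's proof: both start from Proposition~\ref{prop:subgrad}, choose $\Al$ aligned with $W^{(\ell)}*\hatP$ for $\ell\le L(B)$ to produce $\sum_{\ell\le L(B)} f_\ell(\hatP)\ge\|\hatP_{S(B)^c}\|_{2,1}$, choose $\Al$ supported on $g_\ell\setminus g_{L(B)}$ for $\ell\in S(B)$ and apply Cauchy--Schwarz to obtain the $\|\hatP_{S(B)}-B\|_{2,1}^*$ bound, then split $\|\hatP-B\|_{2,1}$ via Lemma~\ref{lem2}(i) and absorb the leftover $2\lambda\|\hatP_{S(B)}-B\|_{2,1}$ using $\|\cdot\|_{2,1}\le\|\cdot\|_{2,1}^*$. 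The only differences are cosmetic: you give an explicit formula $\Al_{g_\ell}=(W^{(\ell)}*B)_{g_\ell}/f_\ell(B)$ for $\ell\in S(B)$ where the paper merely stipulates $\Al_{g_{L(B)}}=0$, and you invoke $f_\ell(\hatP)\ge w_\ell\|\hatP_{s_\ell}\|_2$ directly where the paper cites Lemma~\ref{lem2}(iii).
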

\begin{proof}
For simplicity, let $S = S(B)$ and $L=L(B)$. 
The focus of this proof is on the term 
\[
\left \langle \sum_{\ell=1}^{p-1} W^{(\ell)}\ast  A^{(\ell)}, \hatP - B\right\rangle\]
in Proposition~\ref{prop:subgrad}.
For $1\leq \ell \leq  L$, the constraints on $A^{(\ell)}$  are
$\|A^{(\ell)}_{g_{\ell}^c}\|_2 = 0$ and $\|A^{(\ell)}_{g_{\ell}}\|_2 \leq
1$ (the third constraint holds automatically since $B_{g_\ell}=0$ for
$\ell\le L$).  We  let $A_{s_m}^{(\ell)} = w_{\ell m}\hatP_{s_m}/f_{\ell}(\hatP)$
if $f_{\ell}(\hatP) \neq 0$ and $0$ otherwise, for $1\leq m\leq \ell,
1\leq \ell\leq L$. 
Then for $\ell \leq L$,
 \begin{align*}
\left \langle  W^{(\ell)}\ast  A^{(\ell)}, \hatP - B\right\rangle 
=&\sum_{m=1}^{\ell} \langle w_{\ell m} A^{(\ell)}_{s_m}, \hatP_{s_m} - B_{s_m}\rangle\\
=&\sum_{m=1}^{\ell} \langle w_{\ell m}^2 \hatP_{s_m}/f_{\ell}(\hatP), \hatP_{s_m}\rangle\\
=&\sum_{m=1}^{\ell} w_{\ell m}^2\|\hatP_{s_m}\|_2^2/f_{\ell}(\hatP)\\
= &f_{\ell}(\hatP).
 \end{align*}
It follows that
 $$
 \left \langle \sum_{\ell=1}^{L} W^{(\ell)}\ast  A^{(\ell)}, \hatP - B\right\rangle = \sum_{\ell=1}^{L} f_{\ell}(\hatP) \geq   \|\hatP_{S^c}\|_{2,1},
 $$
by Lemma \ref{lem2} (iii). \\

 \noindent Next,  fix $\ell\geq L + 1$. By the definition of
 subgradient in~\eqref{eq:subdiff},  $A^{(\ell)}_{g_{L}}$ can be
 chosen to have arbitrary values (as long as $\|A^{(\ell)}_{g_\ell}\|_2\le1$), and we take $A^{(\ell)}_{g_{L}}=0$ because of the equality
 $$
 \left\langle (W^{(\ell)}*B)_{\gl},A^{(\ell)}_{\gl}\right\rangle=\|
   (W^{(\ell)}* B)_{\gl}\|_2\cdot \|A^{(\ell)}_{\gl}\|_2.
 $$
 Then
 \begin{align*}
 -\left\langle  W^{(\ell)}\ast  A^{(\ell)}, \hatP - B\right\rangle
 =& -\sum_{m=L + 1}^{\ell} \langle w_{\ell m} A^{(\ell)}_{s_m}, \hatP_{s_m} - B_{s_m}\rangle\\
 \leq&\sum_{m=L + 1}^{\ell}w_{\ell m}\|A_{s_m}^{(\ell)}\|_2\cdot \| \hatP_{s_m} - B_{s_m}\|_2
 \\
  \leq&\sqrt{\sum_{m=L + 1}^{\ell}w_{\ell m}^2 \| \hatP_{s_m} - B_{s_m}\|_2^2 }\cdot\sqrt{\sum_{m=L + 1}^{\ell}\|A_{s_m}^{(\ell)}\|_2^2}\\
  \leq&\sqrt{\sum_{m=L + 1}^{\ell}w_{\ell m}^2 \| \hatP_{s_m} - B_{s_m}\|_2^2 }.
 \end{align*}
 In the above we used the fact that $A_{g_{\ell}^c}^{(\ell)}=0$ and $\|A^{(\ell)}_{g_{\ell}}\|_2\leq 1$.
 It follows that
 \begin{align*}
 -\left \langle \sum_{\ell=L+1}^{p-1} W^{(\ell)}\ast  A^{(\ell)}, \hatP - B\right\rangle \leq&\sum_{\ell=L +1}^{p-1}\sqrt{\sum_{m=L + 1}^{\ell}w_{\ell m}^2 \| \hatP_{s_m} - B_{s_m}\|_2^2 }\\
=& \|\hatP _{S} - B_{S} \|_{2,1}^{\ast}.
 \end{align*}
 Therefore
 \begin{align*}
 -\left \langle \sum_{\ell=1}^{p-1} W^{(\ell)}\ast  A^{(\ell)}, \hatP - B\right\rangle  \leq  - \|\hatP_{S^c}\|_{2,1}  + \|\hatP _{S} - B_{S} \|_{2,1}^{\ast},
 \end{align*}
 and, by Lemma \ref{lem2} (i), 
 \begin{align*}
 &\|\hatP - B \|_{2,1}-
\left \langle \sum_{\ell=1}^{p-1} W^{(\ell)}\ast  A^{(\ell)}, \hatP - B\right\rangle\\
 \leq& \|\hatP_{S} - B_{S}\|_{2,1} + \|\hatP_{S^c} - B_{S^c}\|_{2,1}- \|\hatP_{S^c}\|_{2,1}  + \|\hatP_{S} - B_{S} \|_{2,1}^{\ast}\\
 \leq & 2 \|\hatP_{S} - B_{S}\|_{2,1}^*.
\end{align*}
Here we have used that $B_{S^c}=0$.  The proposition follows by noting
that $B_S = B$.
\end{proof}

\noindent We are now ready to prove Theorem \ref{thm:deter}.
 \begin{proof}[Proof of Theorem \ref{thm:deter}]

 By Proposition~\ref{prop:deter}, 
 \begin{align}
 \label{eq5}
 \| \hatP -\true \|_F^2 &\leq   \| \true - B\|_F^2  - \|\hatP - B\|_F^2 + \ 2\langle \S_{s_p} - \true_{s_p}, \hatP_{s_p} - B_{s_p}\rangle\\
 &+2\left(\|\S - \true\|_{2,\infty}-\lambda\right) \cdot \|\hatP - B \|_{2,1} + 
   4\lambda\|\hatP_{S(B)} - B\|_{2,1}^*. \nonumber 
 \end{align}
  
 \noindent  First we have
  \begin{equation}
  \label{eq5-1}
  2\langle \S_{s_p} - \true_{s_p}, \hatP_{s_p} - B_{s_p}\rangle \leq \|\S_{s_p} - \true_{s_p}\|_2^2 + \|\hatP_{s_p} - B_{s_p}\|_2^2.
  \end{equation}
Next,
\begin{align}
\|\hatP-B\|_{2,1}&=\sum_{\ell=1}^p w_{\ell} \|\hatP_{s_{\ell}}-B_{s_{\ell}}\|_2\nonumber\\
& \leq \sqrt{\sum_{\ell=1}^p w_{\ell}^2} \cdot \sqrt{\sum_{\ell=1}^p \|\hatP_{s_{\ell}}-B_{s_{\ell}}\|_2^2}\nonumber\\
& \leq \sqrt{\sum_{\ell=1}^p w_{\ell}^2}\cdot \|\hatP - B\|_F.\label{eq5-2}
\end{align}
Finally, since  $2\lambda b\le a\lambda^2+b^2/a$, for any $a > 0$, 
we obtain 
\begin{align*}
2\lambda \|\hatP_{S(B)} - B\|_{2,1}^*&= 2\lambda \sum_{\ell=L(B)+1}^{p-1} \sqrt{\sum_{m=L(B)+1}^{\ell}w_{\ell m}^2 \|\hatP_{s_m}-B_{s_m}\|_2^2}\\
  &\le K(B)\lambda^2a + \sum_{\ell=L(B)+1}^{p-1} \sum_{m=L(B)+1}^{\ell } w_{\ell m}^2 \|\hatP_{s_m} - B_{s_m}\|_2^2/a\\
  &\le  K(B)\lambda^2 a + \sum_{m=L(B)+1}^{p-1} \left(\sum_{\ell=m}^{p-1} w_{\ell m}^2\right) \|\hatP_{s_m} - B_{s_m}\|_2^2/a.
\end{align*}
Letting $a = 2w_0(L(B))= 2\max_{L(B) + 1\leq m\leq p-1} \sum_{\ell=m}^{p-1} w_{\ell m}^2$,
\begin{align}
\label{eq5-3}
2\lambda \|\hatP_{S(B)} - B\|_{2,1}^*\leq 2K(B)\lambda^2w_0(L(B)) + \frac{1}{2}\|\hatP_{S(B)} - B\|_F^2 -\frac{1}{2} \|\hatP_{s_p} - B_{s_p}\|_2^2.
\end{align}
Then combining~(\ref{eq5}), (\ref{eq5-1}), (\ref{eq5-2}) and (\ref{eq5-3}), 
\begin{equation*}
\begin{split}
\| \hatP -\true \|_F^2  \leq &\|\true - B\|_F^2 + \|\S_{s_p} - \true_{s_p}\|_2^2 + 4K(B) \lambda^2 w_0(L(B)) \\
&+ 2\left(\|\S - \true\|_{2,\infty}- \lambda\right)\cdot 1_{\left\{\|\S - \true\|_{2,\infty}\geq \lambda\right\}}\cdot \sqrt{\sum_{\ell=1}^p w_{\ell}^2}\cdot \|\hatP - B \|_F,
\end{split}
\end{equation*}
which concludes  the proof.
\end{proof}

\section{Proof of convergence in Frobenius norm}
\label{sec:proof-theor-refthm}

We will use the following lemma.
\begin{lem}\label{lem:stat}
We have
$$
  \|\S-\true\|_{2,\infty}\leq  \max_{i, j} |\S_{ij}-\true_{ij}| \cdot \max_{1\leq \ell \leq p-1}\sqrt{2\ell}/w_{\ell}.
$$
\end{lem}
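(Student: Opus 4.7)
The plan is to bound each term in the maximum defining $\|\S-\true\|_{2,\infty}$ by converting the $\ell_2$ norm on the subdiagonal $s_\ell$ to an $\ell_\infty$ norm, using that $|s_\ell|=2\ell$. The main ingredients are just the definitions already given in \eqref{eq:norms} together with the elementary inequality $\|v\|_2 \le \sqrt{|v|}\,\|v\|_\infty$ for any finite-dimensional vector $v$.

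Concretely, for each fixed $1\le \ell\le p-1$, I would write
\[
\|\S_{s_\ell}-\true_{s_\ell}\|_2 \;\le\; \sqrt{|s_\ell|}\cdot \max_{jk\in s_\ell}|\S_{jk}-\true_{jk}| \;\le\; \sqrt{2\ell}\cdot \max_{i,j}|\S_{ij}-\true_{ij}|,
\]
since $|s_\ell|=2\ell$ and the inner maximum over $s_\ell$ is dominated by the maximum over all entries. Dividing by $w_\ell$ yields
\[
w_\ell^{-1}\|\S_{s_\ell}-\true_{s_\ell}\|_2 \;\le\; \frac{\sqrt{2\ell}}{w_\ell}\cdot \max_{i,j}|\S_{ij}-\true_{ij}|.
\]

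Finally, taking the maximum over $1\le\ell\le p-1$ on both sides, the right-hand factor $\max_{i,j}|\S_{ij}-\true_{ij}|$ is independent of $\ell$ and hence pulls out, giving exactly
\[
\|\S-\true\|_{2,\infty} \;\le\; \max_{i,j}|\S_{ij}-\true_{ij}|\cdot \max_{1\le\ell\le p-1}\frac{\sqrt{2\ell}}{w_\ell},
\]
as required. There is no real obstacle here; the lemma is a direct consequence of the definitions, and its purpose is clearly to convert the uniform entrywise deviation controlled by the event $\mathcal{A}_x$ of \eqref{eq:whp} into a bound on the dual-type norm $\|\cdot\|_{2,\infty}$ that appears in the deterministic oracle inequality of Theorem \ref{thm:deter}. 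In particular, plugging in the weight choices \eqref{eq:groupw} or \eqref{eq:genw} (both satisfying $w_\ell=\sqrt{2\ell}$) makes the ratio $\sqrt{2\ell}/w_\ell$ equal to one, so the lemma will later reduce $\|\S-\true\|_{2,\infty}$ directly to $\max_{ij}|\S_{ij}-\true_{ij}|$.
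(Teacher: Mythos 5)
Your proof is correct and is exactly the argument the paper intends: the paper simply remarks that the lemma follows immediately from the definition of $\|\cdot\|_{2,\infty}$ in \eqref{eq:norms}, and your elementwise bound $\|\S_{s_\ell}-\true_{s_\ell}\|_2\le\sqrt{2\ell}\,\max_{i,j}|\S_{ij}-\true_{ij}|$ followed by dividing by $w_\ell$ and maximizing over $\ell$ is that immediate argument spelled out.
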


\noindent The proof follows immediately from the definition of the $\| \ \|_{2, \infty}$ norm given in \eqref{eq:norms}. 

 \noindent {\bf Proof of Theorem \ref{thm:oracle}}
 \begin{proof}
 The first oracle inequality follows immediately from Theorem
 \ref{thm:deter}, the choice of $\lambda$ and $\mathcal A_x$, and the
 fact that $w_0(L(B)) \leq w_0 (0) \leq 4 p$ for the given weights.  We now focus on the bound for $\mathbb{E}\|\hatP - \true\|_F^2$.
 By Theorem \ref{thm:deter}, 
 \begin{equation}
\label{oracle_eq1}
\| \hatP -\true \|_F^2  \leq  R_1+ 2R_2 ( \|\hatP - \true \|_F + \|\true- B\|_F),
 \end{equation}
 where 
 \[
 R_1 = \|\true-B\|_F^2 +  \|\S_{s_p} - \true_{s_p}\|_2^2 + 4\lambda^2 K(B)  w_0(L(B))
 \]
 and 
 \[
 R_2 = \left(\|\S - \true\|_{2,\infty}- \lambda\right)\cdot 1_{\left\{\|\S - \true\|_{2,\infty}\geq \lambda\right\}}\cdot \sqrt{\sum_{\ell=1}^p w_{\ell}^2}.
 \]
Using that $2R_2\|\hatP-\true\|_F\le
2R_2^2+\half\|\hatP-\true\|_F^2$ and that $2R_2\|\true-B\|_F\le
R_2^2+\|\true-B\|_F^2$, it follows from \eqref{oracle_eq1} that
 \begin{equation}
 \label{oracle_eq2}
  \|\hatP -\true\|_F^2 \leq 6 R_2^2 + 2 \|\true-B\|_F^2 + 2R_1.
 \end{equation}
 With the given weights, $\sqrt{\sum_{\ell=1}^p w_{\ell}^2}\lesssim p$, hence
  \[
 R_2 \lesssim p \left(\|\S - \true\|_{2,\infty}- \lambda\right)\cdot 1_{\left\{\|\S - \true\|_{2,\infty}\geq \lambda\right\}}. \]
By Lemma \ref{lem:stat} and with the given weights, we obtain that 
 \[
 \|\S - \true\|_{2,\infty} \leq \max_{i,j} |\S_{ij} - \true_{ij}|.
 \]
 Let $D = \max_{i,j} |\S_{ij}- \true_{ij}|$. Then by Theorem
 \ref{thm:whp} in Section \ref{sec:lem-whp} and the given $\lambda$,
 \begin{align*}
 \mathbb{E}R_2^2 &\lesssim p^2 \mathbb{E}\left[ (D-\lambda)^2 \cdot 1_{\left\{D> \lambda\right\}}\right] \\
 & \lesssim p^2 \mathbb{E}\left[T^2\cdot 1_{\{D>\lambda\}}\right]\\
 &\lesssim p/n.
 \end{align*}
Also it is easy to show that
 \[
  \mathbb{E} \|\S_{s_p} - \true_{s_p}\|_2^2 = \sum_{j=1}^p \mathbb{E}(\S_{ij}-\true_{ij})^2 \lesssim \frac{p}{n}.
 \]
 It follows by (\ref{oracle_eq2}) that 
 \begin{align*}
 \mathbb{E} \|\hatP -\true\|_F^2 &\leq 6 \mathbb{E}R_2^2 + 2 \|\true-B\|_F^2 + 2\mathbb{E}R_1\\
 &\lesssim  \|\true-B\|_F^2 +\frac{p}{n}+ \lambda^2 K(B)  w_0(L(B)).
 \end{align*}
 Recalling that $w_0(L(B))\lesssim p$ for the given weights, the theorem now follows.
 \end{proof}

 \section{Proof of Frobenius norm lower bound}
\label{sec:proof-theor-Flower}
 \noindent {\bf Proof of Theorem \ref{thm:Flower}} 
 \begin{proof}
Fix $0 < \alpha < 1/2$.  Let $B_{k,\ell} = e_ke_{\ell}^T + e_{\ell}e_{k}^T$ where $e_k$ is the unit vector in 
$\mathbb{R}^p$ with the $k$th entry being 1 and $e_{\ell}$ similarly defined. Let $\Omega$ be the subset of
$\{0,1\}^{p(p-1)/2}$ such that if $\epsilon = (\epsilon_{12},
\epsilon_{13},\ldots, \epsilon_{1p},\epsilon_{23},\ldots, \epsilon_{2p},\ldots, \epsilon_{p-1,p})\in \Omega$, 
then $\epsilon_{k,\ell}=0$ whenever $|k-\ell|> K$. Denote by $N$ the number of entries in $\epsilon$ that are not fixed at 0, then $N= 2pK + o(pK)$.
By Varshamov-Gilbert's bound (see Lemma 2.9 in \citealt{Tsybakov09}), there exists a subset $\Omega_0$ of $\Omega$ such that: (i) $0\in \Omega_0$; (ii) $\text{Card}(\Omega_0)\geq 2^{N/8} + 1$; (iii) for any two distinct $\epsilon$ and $\epsilon^{\prime}$ in $\Omega_0$, the Hamming distance $\sum_{k,\ell} |\epsilon_{k,\ell}-\epsilon_{k,\ell}^{\prime}|\geq N/8$. 

Now for $\epsilon \in \Omega_0$,
define $\Sigma_{\epsilon} = I_p + \frac{\alpha}{\sqrt{n}} \sum_{k<\ell} \epsilon_{k,\ell}B_{k,\ell}$. Note that 
$\Sigma_{\epsilon}$ has bandwidth at most $K$. For any two distinct $\epsilon$ and $\epsilon^{\prime}$ in $\Omega_0$,
\begin{equation}
\label{cond2}
\|\Sigma_{\epsilon} - \Sigma_{\epsilon^{\prime}}\|_F^2 =\frac{2\alpha^2}{n}\sum_{k<\ell} |\epsilon_{k,\ell}-\epsilon_{k,\ell}^{\prime}|\geq \alpha^2N/(4n).
\end{equation}
It's easy to see that $\tr(\Sigma_{\epsilon}) = p$. Note that 
\[
\|\Sigma_{\epsilon}-I_p\|_{op} \leq \frac{\alpha}{\sqrt{n}} 2K <1.
\]
 Hence $\Sigma_{\epsilon}$ is positive definite.

With slight abuse of notation, let $\mathbb{P}_{\Sigma}$ denote the joint probability distribution of 
$\mathbf{X}_1,\ldots,\mathbf{X}_n$ and each $\mathbf{X}_i$ is from a multivariate normal distribution with mean zero and covariance $\Sigma$.
Let $\mathbb{K}(\mathbb{P}_{\Sigma_{\epsilon}}, \mathbb{P}_{I_p}) =
 \int \log\left(\frac{d\mathbb{P}_{\Sigma_{\epsilon}}}{d\mathbb{P}_{I_p}}\right) d\mathbb{P}_{\Sigma_{\epsilon}}$ be the Kullback-Leibler
 divergence. Then we can verify that
 \begin{align*}
 \mathbb{K}(\mathbb{P}_{\Sigma_{\epsilon}}, \mathbb{P}_{I_p}) &=n\left\{-\frac{p}{2} + \frac{1}{2}\tr(\Sigma_{\epsilon}) - \frac{1}{2}\log \text{det}(\Sigma_{\epsilon})\right\}\\
 &=-\frac{n}{2}\log\text{det}(\Sigma_{\epsilon})\\
 &=-\frac{n}{2}\sum_{k=1}^p \log\left\{1 + \lambda_k (\tilde\Sigma_{\epsilon})\right\},
 \end{align*}
 where $\tilde\Sigma_{\epsilon} = \Sigma_{\epsilon}-I_p$. By the fact that $\log (1 + x) \geq x - x^2/2$ for any $x\geq 0$ and that
 $\sum_{k=1}^p \lambda_k (\tilde\Sigma_{\epsilon}) = 0$, we obtain that 
 \begin{align*}
 \mathbb{K}(\mathbb{P}_{\Sigma_{\epsilon}}, \mathbb{P}_{I_p}) &\leq \frac{n}{4}\sum_{k=1}^p \lambda_k^2 (\tilde\Sigma_{\epsilon})\\
 &= \frac{n}{4}\|\tilde\Sigma_{\epsilon}\|_F^2\\
 &\leq \alpha^2 pK.
 \end{align*}
 Therefore,
 \[
 \frac{1}{\text{Card}(\Omega_0)}\sum_{\Sigma_{\epsilon}\in \Omega_0}  \mathbb{K}(\mathbb{P}_{\Sigma_{\epsilon}}, \mathbb{P}_{I_p}) 
 \leq \alpha^2 pK.
 \]
 Since $\log (\text{Card}(\Omega_0)-1) \geq \log(2) N/8$ and $N= 2pK +o(pK)$, for any $0< a <1/8$, we can choose $\alpha$ small enough (depends only
 on $a$) such that 
 \begin{equation}
 \label{cond3}
  \frac{1}{\text{Card}(\Omega_0)}\sum_{\Sigma_{\epsilon}\in \Omega_0} \mathbb{K}(\mathbb{P}_{\Sigma_{\epsilon}}, \mathbb{P}_{I_p}) 
 \leq a \log (\text{Card}(\Omega_0)-1).
 \end{equation}
 With (\ref{cond2}) and (\ref{cond3}), by Theorem 2.5 in \cite{Tsybakov09}, the theorem holds.
 \end{proof}

\section{Proof of convergence in operator norm}
\label{sec:proof-onorm}
\noindent {\bf Proof of Theorem \ref{thm:opnormexact2}}
\begin{proof}
The arguments given here hold on the set  $\mathcal{A}_x$ defined in \eqref{eq:whp}, with $x$ as in Theorem \ref{thm:oracle}. Since, under our assumptions, we have $\hat{K} = K$, with high probability, we further have: 
\begin{align*}
\|\hatP - \true\|_{op}
\leq &\|\hatP_S - \S_{S}\|_{op} + \|\S_{S}-\true_{S}\|_{op} + \|\hatP_{S^c}-\true_{S^c}\|_{op}\\
= &\|\hatP_S - \S_{S}\|_{op} + \|\S_{S}-\true_{S}\|_{op} +\|\true_{S^c}\|_{op}  \\
\lesssim &\|\hatP_S - \S_{S}\|_{1,1} + \|\S_{S}-\true_{S}\|_{1,1} + K\sqrt{\log p/n} \\
\lesssim &\max_i \sum_{|i-j|\leq K} |\hatP_{ij}-\S_{ij}| + K\sqrt{\log p/n}.
\end{align*}
We claim that:
{\it
 there exists a constant $c>0$ such that
\begin{equation}
\label{claim}
|\hatP_{ij}-\S_{ij}|\leq c \lambda \ \ \mbox{for all } |i-j|\leq K.
\end{equation}
}
Then we have 
$
\|\hatP-\true\|_{op} \lesssim K\sqrt{\log p/n}
$
and the proof is complete.
 
Next, we prove claim (\ref{claim}). By (\ref{recur}), we have for $\ell\geq L$ and $m\leq \ell$,
\begin{align*}
\hat R_{s_{m}}^{(\ell+1)} &= \frac{\hat \nu_{\ell}}{w_{\ell, m}^2 + \hat \nu_{\ell}} \hat R_{s_m}^{(\ell)}=\hat R_{s_m}^{(\ell)} - \frac{w_{\ell, m}^2}{w_{\ell, m}^2 + \hat \nu_{\ell}} \hat R_{s_m}^{(\ell)},
\end{align*}
where $\hat \nu_{\ell}$ satisfies
\begin{equation}
\label{nu_1}
\sum_{m=1}^{\ell} \frac{w_{\ell, m}^2}{(w_{\ell, m}^2 + \hat \nu_{\ell})^2} \|\hat R_{s_m}^{(\ell)}\|_2^2  = \lambda^2.
\end{equation}
Let $h_{\ell, m} =  \frac{w_{\ell, m}^2}{w_{\ell, m}^2 + \hat \nu_{\ell}}$, then we have 
$$
\hat R_{s_{m}}^{(\ell+1)} = (1-h_{\ell, m})\hat R_{s_{m}}^{(\ell)},
$$
and hence for $m\geq L+1$,
$$
\hatP_{s_m} = \S_{s_m}\prod_{\ell= m}^{p-1} (1-h_{\ell, m}).
$$
Let $h_m = \prod_{\ell= m}^{p-1} (1-h_{\ell, m})$, then $h_m < 1$ and

$$
h_m = 1 -\sum_{\ell=m}^{p-1}h_{\ell,m} + o\left\{\left(\sum_{\ell=m}^{p-1}h_{\ell,m}\right)^2\right\}.
$$
Note that if  we establish that
\begin{equation}
\label{hm}
\sum_{\ell=m}^{p-1}h_{\ell,m}\leq C \lambda,
\end{equation}
for some constant $C>0$, 
then, for each $ L+ 1 \leq m \leq p$ and each $(i,j) \in S_m$, we have 
\begin{eqnarray}
|\hatP_{ij} -\true_{ij}| &=& |h_m \S_{ij} -h_m\true_{ij} + h_m\true_{ij} - \true_{ij}| \nonumber \\
& \leq & c\lambda, \nonumber
\end{eqnarray}

\noindent for some sufficiently large $c$ that does not depend on $i$ or $j$. Therefore to prove (\ref{claim}), it suffices
to prove (\ref{hm}).

Now we focus on $\hat\nu_{\ell}$.
By~(\ref{nu_1}), if $\ell \geq L+1$,
$$
\sum_{m=L+1}^{\ell} \frac{w_{\ell, m}^2}{(w_{\ell, \ell}^2 + \hat \nu_{\ell})^2} \|\hat R_{s_m}^{(\ell)}\|_2^2\leq \lambda^2,
$$
which leads to
$$
w_{\ell,\ell}^2 + \hat\nu_{\ell} \geq \frac{\sqrt{\sum_{m=L+1}^{\ell} w_{\ell,m}^2\|\hat R_{s_m}^{(\ell)}\|_2^2}}{\lambda}\geq \frac{w_{\ell\ell} \|\hat R_{s_{\ell}}^{(\ell)}\|_2}{\lambda},
$$
since $\max_{m}w_{\ell m} = w_{\ell\ell}=: w_{\ell}$. 
Note that, by (\ref{recur}), for every $m \geq L+1$, we have  $\|\hat R_{s_m}^{(\ell)}\|_2 =\|\S_{s_m}\|_2 \prod_{\ell^{\prime} = m}^{\ell-1}(1-h_{\ell^{\prime},m})$ and $\|\hat R_{s_{\ell}}^{(\ell)}\|_2 = \|\S_{s_{\ell}}\|_2$.  Then
\begin{equation}
\label{nu_3}
\hat\nu_{\ell} \geq \frac{w_{\ell} \|\S_{s_{\ell}}\|_2}{\lambda} - w_{\ell}^2.
\end{equation}
Since $
h_{\ell, m} =  \frac{w_{\ell, m}^2}{w_{\ell, m}^2 + \hat \nu_{\ell}},
$
we derive that
\begin{align*}
\sum_{\ell=m}^{p-1} h_{\ell m} &\leq \sum_{\ell=m}^{p-1}\frac{w_{\ell m}^2}{\hat\nu_{\ell}}\\
&\leq \frac{\sum_{\ell=m}^{p-1}w_{\ell m}^2/(2\ell)}{\min_{\ell=L+1}^{p-1} \hat\nu_{\ell}/(2\ell)}\\
&\lesssim \frac{1}{\min_{\ell=L+1}^{p-1} \hat\nu_{\ell}/(2\ell)},
\end{align*}
where the last inequality follows with the two sets of given weights.

Therefore, to prove (\ref{hm}), we just need to show that 
\[
\min_{L+1\leq \ell \leq p-1} \hat\nu_{\ell}/\ell \gtrsim \lambda^{-1},
\]
which follows immediately by the signal strength condition and by the fact that 
\[
\|\S_{s_{\ell}}\|_2 - \|\true_{s_{\ell}}\|_2 \gtrsim -\lambda \sqrt{2\ell}
\]
uniformly for all $\ell$.
\end{proof}

\bibliography{references}

\end{document}